\theoremstyle{plain}
\newtheorem{thm}{Theorem}[section]
\newtheorem{cor}[thm]{Corollary}
\newtheorem{lem}[thm]{Lemma}
\newtheorem{prop}[thm]{Proposition}
\theoremstyle{definition}
\newtheorem{defn}{Definition}[section]
\theoremstyle{remark}
\newtheorem{rem}{Remark}[section]
\newtheorem{ex}{Example}[section]
\numberwithin{equation}{section}
\newcommand{\ra}{\rightarrow}
\begin{document}

\title{On the mixing time and spectral gap for birth and death chains}

\author[G.-Y. Chen]{Guan-Yu Chen$^1$}

\author[L. Saloff-Coste]{Laurent Saloff-Coste$^2$}

\thanks{$^1$Partially supported by NSC grant NSC100-2115-M-009-003-MY2}

\address{$^1$Department of Applied Mathematics, National Chiao Tung University, Hsinchu 300, Taiwan}
\email{gychen@math.nctu.edu.tw}

\thanks{$^2$Partially supported by NSF grant DMS-1004771}

\address{$^2$Malott Hall, Department of Mathematics, Cornell University, Ithaca, NY 14853-4201}
\email{lsc@math.cornell.edu}

\keywords{Birth and death chains, Cutoff phenomenon}

\subjclass[2000]{60J10,60J27}

\begin{abstract}
For birth and death chains, we derive bounds on the spectral gap and mixing time in terms of birth and death rates. Together with the results of Ding {\it et al.} in \cite{DLP10}, this provides a criterion for the existence of a cutoff in terms of the birth and death rates. A variety of illustrative examples are treated.
\end{abstract}

\maketitle

\section{Introduction}\label{s-intro}
Let $\Omega$ be a countable set and $(\Omega,K,\pi)$ be an irreducible Markov chain on $\Omega$ with transition matrix $K$ and stationary distribution $\pi$. Let $I$ be the identity matrix indexed by $\Omega$ and
\[
 H_t=e^{-t(I-K)}=\sum_{i=0}^\infty e^{-t}t^iK^i/i!
\]
be the associated semigroup which describes the corresponding natural continuous time process on $\Omega$. For $\delta\in(0,1)$, set
\begin{equation}\label{eq-lazyK}
    K_\delta=\delta I+(1-\delta) K.
\end{equation}
Clearly, $K_\delta$ is similar to $K$ but with an additional holding probability depending of $\delta$. We call $K_\delta$ the $\delta$-lazy walk or $\delta$-lazy chain of $K$. It is well-known that if $K$ is irreducible with stationary distribution $\pi$, then
\[
    \lim_{m\ra\infty}K_\delta^m(x,y)=\lim_{t\ra\infty}H_t(x,y)=\pi(y),\quad\forall  x,y\in\Omega,\,\delta\in(0,1).
\]

In this paper, we consider convergence in total variation. The total variation between two probabilities $\mu,\nu$ on $\Omega$ is defined by $\|\mu-\nu\|_{\text{\tiny TV}}=\sup\{\mu(A)-\nu(A)|A\subset\Omega\}$. For any irreducible $K$ with stationary distribution $\pi$, the (maximum) total variation distance is defined by
\begin{equation}\label{eq-tv}
    d_{\text{\tiny TV}}(m)=\sup_{x\in\Omega}\|K^m(x,\cdot)-\pi\|_{\text{\tiny TV}},
\end{equation}
and the corresponding mixing time is given by
\begin{equation}\label{eq-tvmix}
    T_{\text{\tiny TV}}(\epsilon)=\inf\{m\ge 0|d_{\text{\tiny TV}}(m)\le\epsilon\},\quad\forall \epsilon\in(0,1).
\end{equation}
We write $d_{\text{\tiny TV}}^{(c)},T_{\text{\tiny TV}}^{(c)}$ for the total variation distance and mixing time for the continuous semigroup and $d_{\text{\tiny TV}}^{(\delta)},T_{\text{\tiny TV}}^{(\delta)}$ for the $\delta$-lazy walk.

A birth and death chain on $\{0,1,...,n\}$ with birth rate $p_i$, death rate $q_i$ and holding rate $r_i$ is a Markov chain with transition matrix $K$ given by
\[
 K(i,i+1)=p_i,\quad K(i,i-1)=q_i,\quad K(i,i)=r_i,\quad\forall 0\le i\le n,
\]
where $p_i+q_i+r_i=1$ and $p_n=q_0=0$. It is obvious that $K$ is irreducible if and only if $p_iq_{i+1}>0$ for $0\le i<n$. Under the assumption of irreducibility, the unique stationary distribution $\pi$ of $K$ is given by $\pi(i)=c(p_0\cdots p_{i-1})/(q_1\cdots q_i)$, where $c$ is a positive constant such that $\sum_{i=0}^n\pi(i)=1$. The following theorem provides a bound on the mixing time using the birth and death rates and is treated in Theorems \ref{t-upper} and \ref{t-lower}.

\begin{thm}\label{t-mixingtime}
Let $K$ be an irreducible birth and death chain on $\{0,1,...,n\}$ with birth, death and holding rates $p_i,q_i,r_i$. Let $i_0$ be a state satisfying $\pi([0,i_0])\ge 1/2$ and $\pi([i_0,n])\ge 1/2$, where $\pi(A)=\sum_{i\in A}\pi(i)$, and set
\[
 t=\max\left\{\sum_{k=0}^{i_0-1}\frac{\pi([0,k])}{\pi(k)p_k},\sum_{k=i_0+1}^n\frac{\pi([k,n])}{\pi(k)q_k}\right\}.
\]
Then, for any $\delta\in[1/2,1)$,
\[
 \min\left\{T_{\textnormal{\tiny TV}}^{(c)}(1/10),T_{\textnormal{\tiny TV}}^{(\delta)}(1/20)\right\}\ge \frac{t}{6},
\]
and
\[
 \max\left\{T_{\textnormal{\tiny TV}}^{(c)}(\epsilon),T_{\textnormal{\tiny TV}}^{(\delta)}(\epsilon)\right\}\le\frac{18t}{\epsilon^2},\quad\forall\epsilon\in(0,1).
\]
\end{thm}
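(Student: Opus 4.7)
The plan is to identify the quantity $t$ with the maximum expected hitting time of the median state $i_0$, and then use standard comparisons between hitting times and mixing times for reversible chains, exploiting the monotonicity of the birth–death structure to reduce the analysis to starts at the endpoints $0$ and $n$. The one-step hitting formula $E_k[\tau_{k+1}]=\pi([0,k])/(\pi(k)p_k)$ (and its mirror for death steps) telescopes to give
\[
E_0[\tau_{i_0}]=\sum_{k=0}^{i_0-1}\frac{\pi([0,k])}{\pi(k)p_k},\qquad E_n[\tau_{i_0}]=\sum_{k=i_0+1}^{n}\frac{\pi([k,n])}{\pi(k)q_k},
\]
so $t=\max_x E_x[\tau_{i_0}]$ by monotonicity of $x\mapsto E_x[\tau_{i_0}]$ about $i_0$.

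For the lower bound I assume WLOG $t=E_0[\tau_{i_0}]$ and start at $0$. Since the chain must hit $i_0$ before it can occupy $[i_0,n]$, and $\pi([i_0,n])\ge 1/2$,
\[
\|K^m(0,\cdot)-\pi\|_{\textnormal{\tiny TV}}\ge \tfrac12-P_0(\tau_{i_0}\le m),
\]
so it suffices to keep $P_0(\tau_{i_0}\le t/6)$ safely below $1/2$. For this I would invoke the Karlin--McGregor/Keilson fact that $\tau_{i_0}$ in the continuous-time chain from an endpoint is distributed as a sum of independent exponentials; such distributions are IFR, and the Barlow--Marshall--Proschan inequality for IFR distributions yields $P_0(\tau_{i_0}>s)\ge \exp(-s/E_0[\tau_{i_0}])$ for all $s\le E_0[\tau_{i_0}]$. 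Choosing $s=t/6$ gives $P_0(\tau_{i_0}\le t/6)\le 1-e^{-1/6}<2/5$, hence a total variation lower bound comfortably above $1/10$. The $\delta$-lazy case then follows by comparing $K_\delta^m$ with $H_{(1-\delta)m}$ via concentration of the Poisson clock, at the cost of a bounded factor when $\delta\ge 1/2$.

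For the upper bound I would use a hitting-time-to-mixing comparison: for any reversible chain, any state $x$, and any set $A$ with $\pi(A)\ge 1/2$,
\[
\|K^m(x,\cdot)-\pi\|_{\textnormal{\tiny TV}}\le C\sqrt{\max_y E_y[\tau_A]/m}.
\]
Taking $A=[0,i_0]$ when $x\ge i_0$ and $A=[i_0,n]$ when $x\le i_0$, and using the monotonicity above to bound $\max_y E_y[\tau_A]\le t$, one obtains $T_{\textnormal{\tiny TV}}(\epsilon)\le Ct/\epsilon^2$; chasing the constant should recover $C=18$. The continuous and lazy cases are handled in parallel, since for $\delta\ge 1/2$ the laziness inflates the time scale only by a bounded factor.

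The main obstacle I anticipate is in the lower bound: verifying the IFR property of $\tau_{i_0}$ via the Karlin--McGregor/Keilson representation, and then transporting the resulting anti-concentration from the continuous chain to the discrete and $\delta$-lazy chains while preserving the explicit constants $1/10$ and $1/20$. The upper bound is comparatively routine, modulo correctly invoking the $1/\sqrt{m}$-form of the hitting-time-to-TV inequality so that the $1/\epsilon^2$ factor appears with the stated constant.
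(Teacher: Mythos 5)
The decisive gap is in your upper bound. The inequality you invoke, $\|K^m(x,\cdot)-\pi\|_{\textnormal{\tiny TV}}\le C\sqrt{\max_y\mathbb{E}_y[\tau_A]/m}$ for a single set $A$ with $\pi(A)\ge 1/2$, is not a citable standard fact and is false as stated for general reversible chains: for the two-state chain with $K(0,1)=K(1,0)=1$ and $A=\{0\}$ the left side equals $1/2$ for every $m$ while the right side tends to $0$. More importantly, even for lazy or continuous-time birth and death chains, hitting one set of stationary measure $1/2$ cannot by itself push the distance below roughly $1/2$: the honest consequence of hitting is the no-crossing coupling bound $d(i,m)\le \mathbb{P}_i(\max\{\tau_j,\tau_k\}>m)+1-\pi([j,k])$ (Proposition \ref{p-upper}), which carries the residual $1-\pi([j,k])$. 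To reach level $\epsilon$ the paper must take $j_\epsilon,k_\epsilon$ to be the $\epsilon/3$ and $1-\epsilon/3$ quantiles, apply Markov's inequality to $\mathbb{E}_0\widetilde\tau_{k_\epsilon}+\mathbb{E}_n\widetilde\tau_{j_\epsilon}$, and then convert these quantile hitting times back to $\mathbb{E}_0\tau_{i_0}+\mathbb{E}_n\tau_{i_0}$ via Corollary \ref{c-tv}; each step costs a factor of order $3/\epsilon$, which is exactly where $18/\epsilon^2$ comes from. Your proposal contains none of this two-sided quantile structure, so ``chasing the constant'' cannot recover $18/\epsilon^2$; a Peres--Sousi-type hitting-time characterization would require laziness, a maximum over all large sets and all starting states, and would not yield explicit constants.

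Your continuous-time lower bound is essentially correct and is a genuinely different (and arguably cleaner) route: the paper also starts from $d^{(c)}(0,s)\ge\mathbb{P}_0(\widetilde\tau_{i_0}>s)-\pi([0,i_0-1])$ and the sum-of-independent-exponentials representation of $\widetilde\tau_{i_0}$, but instead of the Barlow--Marshall IFR bound it argues by cases (one dominant exponential versus small variance plus one-sided Chebyshev), obtaining $d^{(c)}(0,t/6)\ge\min\{e^{-1/2},(5/6)^2/(1/3+(5/6)^2)\}-1/2>1/10$; your $e^{-1/6}-1/2$ is even better. However, your treatment of the $\delta$-lazy case is not a proof: ``comparing $K_\delta^m$ with $H_{(1-\delta)m}$ via concentration of the Poisson clock'' does not transfer a pointwise lower bound on $d^{(c)}$ to one on $d^{(\delta)}$ with the explicit threshold $1/20$. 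The paper instead reruns the argument directly in discrete time, using that $\tau_{i_0}^{(1/2)}$ is a sum of independent geometrics (positivity of the relevant eigenvalues is exactly what $\delta\ge 1/2$ buys), with explicit care for the floor function and for small values of $\mathbb{E}_0\tau_{i_0}^{(1/2)}$, and then reduces $\delta>1/2$ to $\delta=1/2$ via $K_\delta=(K_{2\delta-1})_{1/2}$. You would need either to carry out that discrete computation or to state and verify a discrete IFR bound with the required constants.
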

The authors of \cite{DLP10} derive a similar upper bound. Note that if $(X_m)_{m=0}^\infty$ is a Markov chain on $\Omega_n$ with transition matrix $K$ and $\tau_i:=\min\{m\ge 0|X_m=i\}$, then $t=\max\{\mathbb{E}_0\tau_{i_0},\mathbb{E}_n\tau_{i_0}\}$, where $\mathbb{E}_i$ denotes the conditional expectation given $X_0=i$. See Lemma \ref{l-passage} for details.

A sharp transition phenomenon, known as cutoff, was observed by Aldous and Diaconis in early 1980s. See e.g. \cite{D96cutoff,CSal08} for an introduction and a general review of cutoffs. In total variation, a family of irreducible Markov chains $(\Omega_n,K_n,\pi_n)_{n=1}^\infty$ is said to present a cutoff if
\begin{equation}\label{eq-cutmix}
    \lim_{n\ra\infty}\frac{T_{n,\text{\tiny TV}}(\epsilon)}{T_{n,\text{\tiny TV}}(\eta)}=1,\quad\forall 0<\epsilon<\eta<1.
\end{equation}
The family is said to present a $(t_n,b_n)$ cutoff if $b_n=o(t_n)$ and
\[
 |T_{n,\text{\tiny TV}}(\epsilon)-t_n|=O(b_n),\quad\forall 0<\epsilon<1.
\]
The cutoff for the associated continuous semigroups is defined in a similar way. Given a family $\mathcal{F}$ of irreducible Markov chains, we write $\mathcal{F}_c$ and $\mathcal{F}_\delta$ for the families of corresponding continuous time chain and $\delta$-lazy discrete time chains.

Let $\mathcal{F}=\{(\Omega_n,K_n,\pi_n)|n=1,2,...\}$ be a family of birth and death chains, where $\Omega_n=\{0,1,...,n\}$ and $K_n$ has birth rate $p_{n,i}$, death rate $q_{n,i}$ and holding rate $r_{n,i}$. Suppose that $K_n$ is irreducible with stationary distribution $\pi_n$. For the family $\{(\Omega_n,K_n,\pi_n)|n=1,2,...\}$, Ding {\it et al.} \cite{DLP10} showed that, in the discrete time case and assuming $\inf_{i,n}r_{n,i}>0$, the cutoff in total variation exists if and only if the product of the total variation mixing time and the spectral gap, i.e. the smallest non-zero eigenvalue of $I-K$, tends to infinity. There is also a similar version for the continuous time case.
In \cite{CSal12-2}, we use the results of \cite{DS06,DLP10} to provide another criterion on the cutoff using the eigenvalues of $K_n$. In both cases, the spectral gap is needed to determine if there is a cutoff. The following theorem provides a bound on the spectral gap using the birth and death rates.

\begin{thm}\label{t-spectralgap}
Consider an irreducible birth and death chain $K$ on $\{0,1,...,n\}$ with birth, death and holding rates, $p_i,q_i,r_i$. Let $\pi$ and $\lambda$ be the stationary distribution and spectral gap of $K$ and set
\[
 \ell=\max\left\{\max_{j:j<i_0}\sum_{k=j}^{i_0-1}\frac{\pi([0,j])}{\pi(k)p_k},\max_{j:j> i_0}\sum_{k=i_0+1}^j\frac{\pi([j,n])}{\pi(k)q_k}\right\},
\]
where $i_0$ is a state such that $\pi([0,i_0])\ge 1/2$ and $\pi([i_0,n])\ge 1/2$. Then,
\[
 \frac{1}{4\ell}\le\lambda\le\frac{2}{\ell}.
\]
\end{thm}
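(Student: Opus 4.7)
The plan is to combine the variational characterization of $\lambda$ with Muckenhoupt's weighted Hardy inequality on each side of $i_0$. By detailed balance $\pi(i)p_i=\pi(i+1)q_{i+1}$, the Dirichlet form of $K$ reads $\mathcal{E}(f,f)=\sum_{i=0}^{n-1}\pi(i)p_i(f(i+1)-f(i))^2$, and $\lambda=\inf\{\mathcal{E}(f,f)/\text{Var}_\pi(f):f\text{ non-constant}\}$. Writing $\ell_-$ and $\ell_+$ for the two halves whose maximum is $\ell$, the key observation is that $\ell_\pm$ are exactly the one-sided discrete Muckenhoupt constants for the weighted Hardy inequalities on $[0,i_0]$ and $[i_0,n]$.

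For the upper bound $\lambda\le 2/\ell$, I would use an extremizing test function. Let $j<i_0$ attain the max in $\ell_-$, set $L=\sum_{k=j}^{i_0-1}1/(\pi(k)p_k)$, and take the capped Hardy potential
\[
 f(i)=\begin{cases} L, & 0\le i\le j,\\ \sum_{k=i}^{i_0-1}1/(\pi(k)p_k), & j\le i\le i_0,\\ 0, & i_0\le i\le n.\end{cases}
\]
A short telescoping computation gives $\mathcal{E}(f,f)=L$, and since $f\equiv L$ on $[0,j]$ and $f\equiv 0$ on $[i_0,n]$, the pairwise formula for the variance gives $\text{Var}_\pi(f)\ge L^2\pi([0,j])\pi([i_0,n])\ge L^2\pi([0,j])/2$. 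Hence $\lambda\le 2/(L\pi([0,j]))=2/\ell_-$. A symmetric construction on $[i_0,n]$ using the death rates $q_k$ gives $\lambda\le 2/\ell_+$, hence $\lambda\le 2/\ell$.

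For the lower bound $\lambda\ge 1/(4\ell)$, I would start from $\text{Var}_\pi(f)\le\sum_i\pi(i)(f(i)-f(i_0))^2$ and split at $i_0$. With $g(k)=f(k)-f(i_0)$ so that $g(i_0)=0$, the heart of the argument is the one-sided discrete Hardy inequality
\[
 \sum_{i=0}^{i_0}\pi(i)g(i)^2\le 4\ell_-\sum_{i=0}^{i_0-1}\pi(i)p_i(g(i+1)-g(i))^2,
\]
which I would prove by writing $g(i)=-\sum_{k=i}^{i_0-1}(g(k+1)-g(k))$ and applying Cauchy--Schwarz with a weight chosen so that partial sums of $\pi$ pair cleanly against partial sums of $1/(\pi(k)p_k)$. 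The analogous estimate on $[i_0,n]$ (by reversing the ordering of states and invoking detailed balance) yields the companion bound with $4\ell_+$. Adding the two gives $\text{Var}_\pi(f)\le 4\ell\,\mathcal{E}(f,f)$, i.e.\ $\lambda\ge 1/(4\ell)$.

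The hard part will be the Muckenhoupt step itself: the subtle issue is choosing the Cauchy--Schwarz weights so that the estimate produces the maximum $\ell_-$ rather than a larger quantity such as $\sum_{k<i_0}\pi([0,k])/(\pi(k)p_k)$, with the clean constant $4$. This is the classical Muckenhoupt argument recast in discrete form; once it is in place, the rest---the variational formula, the test-function computation for the upper bound, and the decomposition at $i_0$---is essentially mechanical.
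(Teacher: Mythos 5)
Your proposal is correct and, for the lower bound, follows exactly the paper's route: the variational characterization, the decomposition of the variance at the median via $g=f-f(i_0)$, and a one-sided discrete Muckenhoupt--Hardy inequality on each half with constant $4\ell_\pm$ (the paper carries this out in Proposition A.2, using Cauchy--Schwarz with the weight $N^{1/2}(j)$ where $N(j)=\sum_{i\le j}1/\mu(i)$, precisely the weighting you anticipate). The only genuine divergence is in the upper bound: the paper takes the abstract extremizer of the one-sided Hardy inequality (whose existence and Euler--Lagrange equation it uses to prove the sharpness $A\ge B$), extends it by zero past the median, and bounds the variance below via $\pi(\{\psi=0\})\ge\pi([0,m])$; you instead plug in the explicit Muckenhoupt test function capped at level $L$ on $[0,j]$ and vanishing on $[i_0,n]$, computing $\mathcal{E}(f,f)=L$ by telescoping and $\mathrm{Var}_\pi(f)\ge L^2\pi([0,j])\pi([i_0,n])\ge L^2\pi([0,j])/2$ from the pairwise variance formula. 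Your version is more elementary and self-contained (no appeal to the existence of an extremizer), and both yield the same constant $2/\ell$; the paper's version has the side benefit of producing the slightly more general bound $\lambda\le 1/(\min\{\pi([0,m]),\pi([m,n])\}C(m))$ for an arbitrary splitting point $m$, not just a median.
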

The above theorem is motivated by \cite{M99}, where the author considers the spectral gap of birth and death chains on $\mathbb{Z}$. We refer the reader to \cite{M99} and the references therein for more information. Note that if $t,\ell$ are the constants in Theorem \ref{t-mixingtime}-\ref{t-spectralgap}, then $t\ge\ell$. Based on the results in \cite{DLP10}, we obtain a theorem regarding cutoffs for birth and death chains.

\begin{thm}\label{t-bdc-cut-main}
Consider a family of irreducible birth and death chains
\[
 \mathcal{F}=\{(\Omega_n,K_n,\pi_n)|n=1,2,...\},
\]
where $\Omega_n=\{0,1,...,n\}$ and $K_n$ has birth, death and holding rates, $p_{n,i},q_{n,i},r_{n,i}$. For $n\ge 1$, let $i_n\in\{0,...,n\}$ be a state satisfying $\pi_n([0,i_n])\ge 1/2$ and $\pi_n([i_n+1,n])\ge 1/2$ and set
\[
 t_n=\max\left\{\sum_{k=0}^{i_n-1}\frac{\pi_n([0,k])}{\pi_n(k)p_{n,k}},\sum_{k=i_n+1}^n\frac{\pi_n([k,n])}{\pi_n(k)q_{n,k}}\right\}.
\]
and
\[
 \ell_n=\max\left\{\max_{j:j<i_n}\sum_{k=j}^{i_n-1}\frac{\pi_n([0,j])}{\pi_n(k)p_{n,k}},\max_{j:j>i_n}\sum_{k=i_n+1}^j\frac{\pi_n([j,n])}{\pi_n(k)q_{n,k}}\right\},
\]
Then, for any $\epsilon\in(0,1/2)$ and $\delta\in(0,1)$, there is a constant $C=C(\epsilon,\delta)>1$ such that
\[
 C^{-1}t_n\le \min\{T_{n,\textnormal{\tiny TV}}^{(c)}(\epsilon),T_{n,\textnormal{\tiny TV}}^{(\delta)}(\epsilon)\}\le\max\{T_{n,\textnormal{\tiny TV}}^{(c)}(\epsilon),T_{n,\textnormal{\tiny TV}}^{(\delta)}(\epsilon)\}\le Ct_n,
\]
for $n$ large enough. Moreover, the following are equivalent.
\begin{itemize}
\item[(1)] $\mathcal{F}_c$ has a total variation cutoff.

\item[(2)] For $\delta\in(0,1)$, $\mathcal{F}_\delta$ has a total variation cutoff.

\item[(3)] $t_n\ell_n\ra\infty$.
\end{itemize}
\end{thm}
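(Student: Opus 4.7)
The plan is to reduce everything to Theorems \ref{t-mixingtime} and \ref{t-spectralgap} together with the product-condition characterization of total variation cutoff for birth and death chains proved by Ding, Lubetzky and Peres in \cite{DLP10} and recalled in the introduction.

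First I would establish the two-sided comparison $T^{(c)}_{n,\text{\tiny TV}}(\epsilon) \asymp T^{(\delta)}_{n,\text{\tiny TV}}(\epsilon) \asymp t_n$ asserted in the statement.  The upper bound $\le 18 t_n/\epsilon^2$ is exactly the conclusion of Theorem \ref{t-mixingtime}.  For the lower bound, Theorem \ref{t-mixingtime} directly supplies $T^{(c)}_{n,\text{\tiny TV}}(1/10) \ge t_n/6$ and $T^{(\delta)}_{n,\text{\tiny TV}}(1/20) \ge t_n/6$, and these pass to all smaller $\epsilon$ by monotonicity of $T_{n,\text{\tiny TV}}(\cdot)$.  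To extend to $\epsilon \in (1/10, 1/2)$ I would invoke the standard submultiplicative inequality $d(s+t) \le 2 d(s) d(t)$ for the maximum total variation distance (valid for both the discrete iterates $K^m$ and the semigroup $H_t$); iteration gives $T(2^{k-1}\epsilon^k) \le k \, T(\epsilon)$, and choosing $k = k(\epsilon)$ with $2^{k-1}\epsilon^k \le 1/20$ (possible because $2\epsilon < 1$) yields $T(\epsilon) \ge t_n/(6k)$.  This produces a constant $C = C(\epsilon,\delta)$ uniform in $n$.

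For the equivalence of (1)--(3) I would apply the DLP criterion to both families.  In the continuous setting, $\mathcal{F}_c$ admits a TV cutoff iff $T^{(c)}_{n,\text{\tiny TV}}(\epsilon) \cdot \lambda_n \to \infty$; since the spectral gap of $K_{n,\delta}$ is $(1-\delta)\lambda_n$ for fixed $\delta \in (0,1)$, $\mathcal{F}_\delta$ admits a TV cutoff iff $T^{(\delta)}_{n,\text{\tiny TV}}(\epsilon) \cdot \lambda_n \to \infty$.  Combining the comparison $T_{n,\text{\tiny TV}}(\epsilon) \asymp t_n$ from the previous paragraph with the bound $\lambda_n \asymp 1/\ell_n$ of Theorem \ref{t-spectralgap}, both product expressions are comparable to $t_n/\ell_n$ up to constants depending only on $\epsilon$ and $\delta$.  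This identifies (1) and (2) with the condition displayed in (3).

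The only genuinely delicate step is the submultiplicativity argument used to extend the lower mixing-time bound to arbitrary $\epsilon \in (0, 1/2)$; the rest is a direct combination of the two preceding theorems with the DLP criterion.  A minor but harmless point is to verify that the factor $1-\delta$ appearing in the spectral gap of $K_{n,\delta}$ enters only as a multiplicative constant for fixed $\delta > 0$, and hence does not affect whether the product tends to infinity.
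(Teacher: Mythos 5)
Your proposal is correct and follows the same skeleton as the paper, which simply declares the theorem ``immediate from Theorems \ref{t-mixingtime}, \ref{t-spectralgap}, \ref{t-csal12-2} and \ref{t-mixingcomp}.'' The two places where you deviate are both legitimate substitutions. First, where the paper invokes Theorem \ref{t-mixingcomp} to pass from the specific thresholds $1/10$ and $1/20$ of Theorem \ref{t-mixingtime} to arbitrary $\epsilon\in(0,1/2)$, you use the submultiplicativity of the worst-pair distance $\bar d$; this is exactly the mechanism underlying \ref{t-mixingcomp}, and your argument is sound (only note that the clean inequality is $\bar d(s+t)\le\bar d(s)\bar d(t)$ with $d\le\bar d\le 2d$, so the iterate bound is $d(kt)\le(2d(t))^k=(2\epsilon)^k$ rather than $2^{k-1}\epsilon^k$ --- an immaterial constant since $(2\epsilon)^k\to 0$ for $\epsilon<1/2$). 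Second, where the paper cites Theorem \ref{t-csal12-2}, you apply the Ding--Lubetzky--Peres product criterion directly; this works because the $\delta$-lazy chains have holding rates bounded below by $\delta>0$, so the DLP hypothesis $\inf_{n,i}r_{n,i}>0$ is satisfied, and you correctly observe that the gap of $K_{n,\delta}$ is $(1-\delta)\lambda_n$, which only perturbs the product by a constant. One remark: the product of mixing time and gap is comparable to $t_n/\ell_n$ (using $\lambda_n\asymp 1/\ell_n$ from Theorem \ref{t-spectralgap}), which is what you identify as condition (3); the displayed condition ``$t_n\ell_n\to\infty$'' in the statement is evidently a typo for ``$t_n/\ell_n\to\infty$'' (compare Theorem \ref{t-cutoffrates}(5)), and your reading is the intended one.
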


The above theorem is immediate from Theorems \ref{t-mixingtime}, \ref{t-spectralgap}, \ref{t-csal12-2} and \ref{t-mixingcomp}. The selection of $i_n$ can be relaxed. See Theorem \ref{t-tvmixingcn} for a precise statement. By the results in \cite{CSal12-2}, Theorem \ref{t-bdc-cut-main} also holds when $t_n$ is replaced by the following constant
\[
 s_n=\frac{1}{\lambda_{n,1}}+\cdots+\frac{1}{\lambda_{n,n}},
\]
where $\lambda_{n,1},...,\lambda_{n,n}$ are nonzero eigenvalues of $I-K_n$. Furthermore, Theorem \ref{t-bdc-cut-main} also holds in separation with $\delta\in[1/2,1)$. We will use Theorem \ref{t-bdc-cut-main} to study the cutoff of several examples including the following theorem which concerns random walks with bottlenecks. It is a special case of Theorem \ref{t-btnk}.

\begin{thm}\label{t-btnk1}
For $n\ge 1$, let $\Omega_n=\{0,1,...,n\}$, $\pi_n\equiv 1/(n+1)$ and $K_n$ be an irreducible birth and death chain on $\Omega_n$ satisfying
\[
 K_n(i-1,i)=K_n(i,i-1)=\begin{cases}1/2&\text{for }i\notin\{x_{n,1},...,x_{n,k_n}\}\\\epsilon_n&\text{for }i=x_{n,j},\,1\le j\le k_n\end{cases},
\]
where $0\le k_n\le n$, $\epsilon_n\in(0,1/2]$, $x_{n,1},...,x_{n,k_n}\in\Omega_n$ are distinct and the holding rate at $i$ is adjusted accordingly. Set $t_n=n^2+a_n/\epsilon_n$, where
\[
 a_n=\sum_{i=1}^{k_n}\min\{x_{n,i},n+1-x_{n,i}\},
\]
and set
\[
 b_n=\max_{j:j\le n/2}\{(j+1)\times|\{1\le i\le k_n:j<x_{n,i}\le n-j\}|\}.
\]
Then, for any $\epsilon\in(0,1/2)$ and $\delta\in(0,1)$, there is $C=C(\epsilon,\delta)>1$ such that
\[
 C^{-1}t_n\le \min\{T_{n,\textnormal{\tiny TV}}^{(c)}(\epsilon),T_{n,\textnormal{\tiny TV}}^{(\delta)}(\epsilon)\}\le\max\{T_{n,\textnormal{\tiny TV}}^{(c)}(\epsilon),T_{n,\textnormal{\tiny TV}}^{(\delta)}(\epsilon)\}\le Ct_n,
\]
for $n$ large enough.

Moreover, the following are equivalent.
\begin{itemize}
\item[(1)] $\mathcal{F}_c$ has a total variation cutoff.

\item[(2)] For $\delta\in(0,1)$, $\mathcal{F}_\delta$ has a total variation cutoff.

\item[(3)] $a_n/(n^2\epsilon_n)\ra\infty$ and $a_n/b_n\ra\infty$.
\end{itemize}
\end{thm}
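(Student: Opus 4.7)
The plan is to apply Theorem \ref{t-bdc-cut-main} directly to $\mathcal{F}$. Since $\pi_n$ is uniform, any state near $n/2$ is a median; I take $i_n=\lfloor n/2\rfloor$, for which $\pi_n([0,k])/\pi_n(k)=k+1$ and $\pi_n([k,n])/\pi_n(k)=n-k+1$. With this substitution, the abstract $t_n$ and $\ell_n$ of Theorem \ref{t-bdc-cut-main} become explicit sums in which only the rates $p_{n,k},q_{n,k}\in\{1/2,\epsilon_n\}$ vary with $n$, and the strategy reduces to comparing these sums with the quantities $n^2+a_n/\epsilon_n$ and $n^2+b_n/\epsilon_n$ appearing in the statement.

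For the mixing-time quantity, I would split each inner sum into its non-bottleneck contribution (where the rate is $1/2$) and its bottleneck contribution (where the rate is $\epsilon_n$). The first telescopes to an expression of order $n^2$ independent of the $x_{n,i}$. The second equals $\sum_{x_{n,i}<i_n}(x_{n,i}+1)/\epsilon_n$ on the left half and $\sum_{x_{n,i}>i_n}(n+1-x_{n,i})/\epsilon_n$ on the right. These two partial sums reconstruct $a_n/\epsilon_n$ up to boundary discrepancies of size $O(n/\epsilon_n)$, so their maximum, and hence the abstract $t_n$, is comparable to $n^2+a_n/\epsilon_n$, yielding the two-sided bounds on $T^{(c)}_{n,\textnormal{\tiny TV}}(\epsilon)$ and $T^{(\delta)}_{n,\textnormal{\tiny TV}}(\epsilon)$ via Theorem \ref{t-bdc-cut-main}.

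For the spectral-gap quantity $\ell_n$, the same splitting inside $\sum_{k=j}^{i_n-1}1/p_{n,k}$ gives a non-bottleneck part bounded by $2(i_n-j)$ and a bottleneck part equal to $|\{i:j\le x_{n,i}<i_n\}|/\epsilon_n$. Multiplying by $j+1$ and maximising over $j<i_n$ produces $O(n^2)$ plus $(1/\epsilon_n)\max_j(j+1)\,|\{i:j\le x_{n,i}<i_n\}|$, and the right-hand max obeys the analogous bound after $j\mapsto n-j$. Any fixed $j\le n/2$ splits $(j,n-j]$ into its parts below and above $i_n\approx n/2$, so the larger of the two asymmetric $\ell_n$-maxima is comparable to the symmetric $b_n$, and $\ell_n\asymp n^2+b_n/\epsilon_n$. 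I expect this comparison of the one-sided $\ell_n$-maxima with the two-sided $b_n$ to be the main bookkeeping obstacle.

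Finally, observe that $a_n\ge b_n$: for any $j\le n/2$, each bottleneck in $(j,n-j]$ contributes at least $j+1$ to $a_n$, so $a_n\ge(j+1)|\{i:j<x_{n,i}\le n-j\}|$. Setting $u_n=a_n/(n^2\epsilon_n)$ and $v_n=b_n/(n^2\epsilon_n)$, so $u_n\ge v_n\ge 0$, the cutoff criterion of Theorem \ref{t-bdc-cut-main} translates via the estimates above into $(1+u_n)/(1+v_n)\to\infty$. This divergence forces $u_n\to\infty$ from the numerator and also $u_n/v_n\to\infty$, since $u_{n_k}\le M v_{n_k}$ along a subsequence would bound the ratio by $\max(1,M)$; conversely these two conditions imply the divergence by splitting into the cases $v_n$ bounded and $v_n\to\infty$. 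This establishes the equivalence with the two stated conditions $a_n/(n^2\epsilon_n)\to\infty$ and $a_n/b_n\to\infty$ in~(3).
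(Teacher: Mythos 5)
Your proposal is correct and follows essentially the same route as the paper, which obtains Theorem \ref{t-btnk1} as the constant-rate case of Theorem \ref{t-btnk}, itself read off from Theorem \ref{t-uniform} (i.e.\ Theorem \ref{t-cutoffrates} applied with $j_n=\lfloor n/2\rfloor$ to the uniform, symmetric stationary measure); your identifications $t_n\asymp n^2+a_n/\epsilon_n$, $\ell_n\asymp n^2+b_n/\epsilon_n$ and the translation of $t_n/\ell_n\to\infty$ into condition (3) are exactly the paper's computation. One small caution: the bottleneck contributions match $a_n/\epsilon_n$ term by term (for $x\le\lfloor n/2\rfloor$ one has $\min\{x,n+1-x\}=x$, and symmetrically on the right), so there is no ``boundary discrepancy of size $O(n/\epsilon_n)$'' to absorb --- which is just as well, since $n/\epsilon_n$ need not be $O(n^2+a_n/\epsilon_n)$ and that inference, had it been needed, would not be valid.
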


The remaining of this article is organized as follows. In Section 2, the concepts of cutoffs and mixing times and fundamental results are reviewed. In Section 3, we give a proof for Theorems \ref{t-mixingtime} and \ref{t-spectralgap}. For illustration, we consider several nontrivial examples in Section 4, where the mixing time and cutoff are determined. Note that the assumption regarding birth and death rates in Sections 3 and 4 can be relaxed using the comparison technique in \cite{DS93-1,DS93-2}.

\section{Backgrounds}

Throughout this paper, for any two sequences $s_n,t_n$ of positive numbers, we write $s_n=O(t_n)$ if there are $C>0,N>0$ such that $|s_n|\le C|t_n|$ for $n\ge N$. If $s_n=O(t_n)$ and $t_n=O(s_n)$, we write $s_n\asymp t_n$. If $t_n/s_n\ra 1$ as $n\ra\infty$, we write $t_n\sim s_n$.

\subsection{Cutoffs and mixing time}
Consider the following definitions.
\begin{defn}\label{d-cutoff}
Referring to the notation in (\ref{eq-tv}), a family $\mathcal{F}=\{(\Omega_n,K_n,\pi_n)|n=1,2,...\}$ is said to present a total variation
\begin{itemize}
\item[(1)] precutoff if there is a sequence $t_n$ and $B>A>0$ such that
\[
    \lim_{n\ra\infty}d_{n,\text{\tiny TV}}(\lceil Bt_n\rceil)=0,\quad\liminf_{n\ra\infty}d_{n,\text{\tiny TV}}(\lfloor At_n\rfloor)>0.
\]

\item[(2)] cutoff if there is a sequence $t_n$ such that, for all $\epsilon>0$,
\[
    \lim_{n\ra\infty}d_{n,\text{\tiny TV}}(\lceil(1+\epsilon)t_n\rceil)=0,\quad\lim_{n\ra\infty}d_{n,\text{\tiny TV}}(\lfloor(1-\epsilon)t_n\rfloor)=1.
\]
\end{itemize}
\end{defn}
In definition \ref{d-cutoff}(2), $t_n$ is called a cutoff time. The definition of a cutoff for continuous semigroups is similar with $\lceil\cdot\rceil$ and $\lfloor\cdot\rfloor$ deleted.

\begin{rem}
In Definition \ref{d-cutoff}, if $t_n\ra\infty$ (or equivalently $T_{n,\text{\tiny TV}}(\epsilon)\ra\infty$ for some $\epsilon\in(0,1)$), then the cutoff is consistent with (\ref{eq-cutmix}). This is also true for cutoffs in continuous semigroups without the assumption $t_n\ra\infty$. See \cite{GY-PhD,CSal08} for further discussions on cutoffs.
\end{rem}

It is well-known that the mixing time can be bounded below by the reciprocal of the spectral gap up to a multiple constant. We cite the bound in \cite{CSal12-2} as follows.

\begin{lem}\label{l-gapsing}
Let $K$ be an irreducible transition matrix on a finite set $\Omega$ with stationary distribution $\pi$. For $\delta\in(0,1)$, let $K_\delta$ be the $\delta$-lazy walk given by \textnormal{(\ref{eq-lazyK})}. Suppose $(\pi,K)$ is reversible, that is, $\pi(x)K(x,y)=\pi(y)K(y,x)$ for all $x,y\in\Omega$ and let $\lambda$ be the smallest non-zero eigenvalue of $I-K$. Then, for $\epsilon\in(0,1/2)$,
\[
 T_{\textnormal{\tiny TV}}^{(c)}(\epsilon)\ge\frac{-\log(2\epsilon)}{\lambda},\quad T_{\textnormal{\tiny TV}}^{(\delta)}(\epsilon)\ge\left\lfloor\frac{-\log(2\epsilon)}{2\max\{1-\delta,\log(2/\delta)\}\lambda}\right\rfloor,
\]
where the second inequality requires $|\Omega|\ge 2/\delta$.
\end{lem}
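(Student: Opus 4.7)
The plan is to prove both bounds by a standard eigenfunction-based lower bound on total variation. Using reversibility, I pick a real eigenfunction $\phi$ of $K$ associated with the largest eigenvalue less than one, namely $1-\lambda$, and normalize so that $\pi(\phi)=0$ and $\|\phi\|_\infty=1$. The key tool is the duality identity
\[
\|P(x,\cdot)-\pi\|_{\text{\tiny TV}}=\tfrac12\sup_{\|f\|_\infty\le 1}|Pf(x)-\pi(f)|,
\]
which I apply with $f=\phi$ to both $P=H_t$ and $P=K_\delta^m$.

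For the continuous semigroup, $H_t\phi=e^{-t\lambda}\phi$ together with $\pi(\phi)=0$ and a point $x_0$ with $|\phi(x_0)|=1$ yields $d_{\text{\tiny TV}}^{(c)}(t)\ge\tfrac12 e^{-t\lambda}$. Demanding this to be at most $\epsilon$ and solving for $t$ gives the first assertion $T_{\text{\tiny TV}}^{(c)}(\epsilon)\ge -\log(2\epsilon)/\lambda$ immediately.

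For the $\delta$-lazy chain, the same $\phi$ satisfies $K_\delta\phi=\mu\phi$ with $\mu=1-(1-\delta)\lambda$, so the analogous calculation produces $d_{\text{\tiny TV}}^{(\delta)}(m)\ge\tfrac12|\mu|^m$ and hence
\[
T_{\text{\tiny TV}}^{(\delta)}(\epsilon)\ge\left\lfloor\frac{-\log(2\epsilon)}{-\log|\mu|}\right\rfloor.
\]
To recover the form stated in the lemma, I still need the elementary inequality $-\log|\mu|\le 2\max\{1-\delta,\log(2/\delta)\}\lambda$. My plan is a case split on $(1-\delta)\lambda$: for $(1-\delta)\lambda\le 1/2$ the Taylor estimate $-\log(1-x)\le 2x$ on $[0,1/2]$ directly gives the term $2(1-\delta)\lambda$, while for larger values convexity of $-\log(1-x)$ on $[0,1-\delta]$ contributes a slope of $\log(1/\delta)/(1-\delta)$, producing the $\log(2/\delta)\lambda$ term.

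The hard part is the high-gap regime $\lambda>1$, in which $(1-\delta)\lambda$ may exceed one, $\mu$ may be negative, and a priori $|\mu|$ could be close to zero so that $-\log|\mu|$ explodes. This is precisely where the hypothesis $|\Omega|\ge 2/\delta$ enters: it excludes the small, near-bipartite pathological examples (such as the two-state chain realising $\lambda=2$) in which an eigenvalue of $K$ near $-1$ would collapse $\mu$ onto zero, and it lets one bound $|\mu|$ below in terms of $\delta$ alone. Either this yields the desired estimate on $-\log|\mu|$, or the right-hand side of the lemma's inequality already drops below one so that the floor makes the statement vacuous. Assembling the two regimes completes the proof.
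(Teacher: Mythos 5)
The paper itself offers no proof of this lemma --- it is quoted verbatim from \cite{CSal12-2} --- so your proposal has to stand on its own. Your overall strategy is the standard (and correct) one: the $L^\infty$ duality for total variation applied to a real eigenfunction $\phi$ with $K\phi=(1-\lambda)\phi$, $\pi(\phi)=0$, $\|\phi\|_\infty=1$. The continuous-time bound is complete, and for the lazy chain your two sub-cases (the estimate $-\log(1-x)\le 2x$ on $[0,1/2]$, and the convexity bound $-\log(1-x)\le x\log(1/\delta)/(1-\delta)$ on $[0,1-\delta]$) correctly cover the whole range $\lambda\le 1$.

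The regime $\lambda>1$, which you flag as ``the hard part,'' is a genuine gap that your proposal does not close. First, the fallback is false: if $\mu=1-(1-\delta)\lambda$ is zero or tiny, the quantity $\lfloor -\log(2\epsilon)/(2\max\{1-\delta,\log(2/\delta)\}\lambda)\rfloor$ is not automatically below $1$ --- with $\lambda\le 2$ and $\delta$ fixed it tends to infinity as $\epsilon\to 0$ --- so the floor cannot make the statement vacuous. Second, the claim that $|\Omega|\ge 2/\delta$ ``lets one bound $|\mu|$ below in terms of $\delta$ alone'' is precisely the missing step, and it needs an argument; the standard one is a trace bound. Since $K(x,x)\ge 0$, one has $\operatorname{tr}(K_\delta)\ge\delta N$ with $N=|\Omega|$. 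By reversibility the eigenvalues of $K_\delta$ are real; one of them equals $1$ and each of the remaining $N-1$ is at most the second largest, which is exactly your $\mu=\delta+(1-\delta)(1-\lambda)=1-(1-\delta)\lambda$. Hence $(N-1)\mu\ge\delta N-1$, and $N\ge 2/\delta$ gives $\mu\ge(\delta N-1)/(N-1)\ge\delta/(2-\delta)\ge\delta/2>0$. This rules out the ``negative or near-zero $\mu$'' scenario entirely and finishes the argument: whenever $(1-\delta)\lambda>1/2$ one has $\lambda>1/2$, so $-\log\mu\le\log(2/\delta)\le 2\lambda\log(2/\delta)\le 2\max\{1-\delta,\log(2/\delta)\}\lambda$. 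Without this (or an equivalent) use of the cardinality hypothesis, the discrete-time half of the lemma is not proved.
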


\subsection{Cutoffs for birth and death chains}

Consider a family of irreducible birth and death chains
\[
 \mathcal{F}=\{(\Omega_n,K_n,\pi_n)|n=1,2,...\},
\]
where $\Omega_n=\{0,1,...,n\}$ and $K_n$ has birth rate $p_{n,i}$, death rate $q_{n,i}$ and holding rate $r_{n,i}$. We write $\mathcal{F}_c,\mathcal{F}_\delta$ as families of the corresponding continuous time chains and $\delta$-lazy discrete time chains in $\mathcal{F}$. A criterion on total variation cutoffs for families of birth and death chains was introduced in \cite{DLP10}, which say that, for $\delta\in(0,1)$, $\mathcal{F}_c,\mathcal{F}_\delta$ have total variation cutoffs if and only if the product of the mixing time and the spectral gap tends to infinity. As the total variation distance is comparable with the separation distance, the authors of \cite{DLP10} identify cutoffs in total variation and separation, where a criterion on separation cutoffs was proposed in \cite{DS06}. In the recent work \cite{CSal12-2}, the cutoffs for $\mathcal{F}_c$ and $\mathcal{F}_\delta$ are proved to be equivalent and this leads to the following theorems.

\begin{thm}\cite[Section 4]{CSal12-2}\label{t-csal12-2}
Let $\mathcal{F}=\{(\Omega_n,K_n,\pi_n)|n=1,2,...\}$ be a family of irreducible birth and death chain with $\Omega_n=\{0,1,...,n\}$. For $n\ge 1$, let $\lambda_{n,1},...,\lambda_{n,n}$ be nonzero eigenvalues of $I-K_n$ and set
\[
 \lambda_n=\min_{1\le i\le n}\lambda_{n,i},\quad s_n=\frac{1}{\lambda_{n,1}}+\cdots+\frac{1}{\lambda_{n,n}}.
\]
Then, the following are equivalent.
\begin{itemize}
\item[(1)] $\mathcal{F}_c$ has a total variation cutoff.

\item[(2)] $\mathcal{F}_\delta$ has a total variation cutoff.

\item[(3)] $\mathcal{F}_c$ has a total variation precutoff.

\item[(4)] $\mathcal{F}_\delta$ has a total variation precutoff.

\item[(5)] $T_{n,\textnormal{\tiny TV}}^{(c)}(\epsilon)\lambda_n\ra\infty$ for some $\epsilon\in(0,1)$.

\item[(6)] $T_{n,\textnormal{\tiny TV}}^{(\delta)}(\epsilon)\lambda_n\ra\infty$ for some $\epsilon\in(0,1)$.

\item[(7)] $s_n\lambda_n\ra\infty$.
\end{itemize}
\end{thm}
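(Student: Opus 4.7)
The plan is to reduce all seven conditions to condition (7), using the spectral probabilistic structure that is special to birth and death chains together with the criterion of \cite{DLP10}. Let $\tau_n$ denote the hitting time of state $n$ starting from state $0$. The first key tool is the Keilson--Fill hitting-time identity: in continuous time $\tau_n$ is distributed as a sum of $n$ independent exponentials with rates $\lambda_{n,1},\ldots,\lambda_{n,n}$, and a parallel geometric decomposition holds for the $\delta$-lazy chain. Consequently $\mathbb{E}_0\tau_n=s_n$ and $\mathrm{Var}_0(\tau_n)=\sum_i\lambda_{n,i}^{-2}\le s_n/\lambda_n$. The second tool is the strong stationary duality identity for birth and death chains, which gives
\[
\max_y\bigl(1-H_t(0,y)/\pi_n(y)\bigr)=\mathbb{P}_0(\tau_n>t),
\]
identifying the separation distance from the extremal state $0$ with the tail of $\tau_n$.

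First I would use these two inputs to establish the unconditional comparison $T_{n,\textnormal{\tiny TV}}^{(c)}\asymp s_n$ and $T_{n,\textnormal{\tiny TV}}^{(\delta)}\asymp s_n$, with constants depending only on $\epsilon$ and $\delta$. The upper bound follows from Markov's inequality on $\tau_n$ together with the factor-$2$ equivalence of separation and total variation for reversible chains from an extremal state. For the lower bound I split into two cases: if $s_n\lambda_n\to\infty$, Chebyshev yields tight concentration of $\tau_n$ around $s_n$, so $T\ge(1-o(1))s_n$; if $s_n\lambda_n$ stays bounded, then $s_n\asymp 1/\lambda_n$ and the classical spectral-gap lower bound in Lemma \ref{l-gapsing} already gives $T\ge c(\epsilon)s_n$.

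Next I would invoke the main result of \cite{DLP10}: for birth and death chains, each of the precutoff and cutoff properties in total variation is equivalent to $T_{n,\textnormal{\tiny TV}}\lambda_n\to\infty$, separately in the continuous and $\delta$-lazy settings. Combined with the unconditional $T\asymp s_n$ from the previous step, the product $T\lambda_n$ agrees with $s_n\lambda_n$ up to constants, so (7) is equivalent to each of (1)--(6). In particular this delivers the nontrivial equivalence (1)$\Leftrightarrow$(2) between continuous and lazy cutoffs.

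The main obstacle is the unconditional comparison $T_{n,\textnormal{\tiny TV}}\asymp s_n$; the upper half is essentially routine but the lower half requires the dichotomy above, which in turn relies on a uniform exponential tail bound $\mathbb{P}_0(\tau_n>s_n+u)\le e^{-c\lambda_n u}$ coming from the sum-of-exponentials representation. A secondary technical point is matching the continuous-time hitting-time identity with a geometric-sum analogue for the $\delta$-lazy chain, which is needed to transfer the argument to the discrete setting with constants that are uniform in $n$.
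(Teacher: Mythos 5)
First, note that the paper does not actually prove Theorem \ref{t-csal12-2}: it is imported verbatim from \cite[Section 4]{CSal12-2}, so there is no internal proof to compare against. Your overall architecture --- establish the unconditional two-sided comparison between the total variation mixing time and $s_n$, then route every condition through the Ding--Lubetzky--Peres criterion $T\lambda_n\to\infty$ --- is the right strategy and is essentially how the cited source proceeds.

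There is, however, a genuine error in your two ``key tools.'' The hitting time $\tau_n$ of state $n$ from state $0$ for the \emph{original} chain is \emph{not} distributed as a sum of independent exponentials with rates $\lambda_{n,1},\dots,\lambda_{n,n}$ (the nonzero eigenvalues of $I-K_n$); by the Brown--Shao formula, which the paper records as (\ref{eq-cts}), its rates are the Dirichlet eigenvalues of the principal submatrix of $I-K_n$ indexed by $\{0,\dots,n-1\}$. These are different: for the two-state chain with $K(0,1)=p$, $K(1,0)=q$ one has $\tau_1\sim\mathrm{Exp}(p)$ while the nonzero eigenvalue of $I-K$ is $p+q$. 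Likewise your identity $\max_y\bigl(1-H_t(0,y)/\pi_n(y)\bigr)=\mathbb{P}_0(\tau_n>t)$ is false as written; the separation distance from $0$ equals the tail of the \emph{fastest strong stationary time}, which by Diaconis--Fill duality is the absorption time of the dual chain and is the object whose exponential rates are the nonzero eigenvalues $\lambda_{n,i}$ (this is the content of \cite{DS06}). Since your argument only uses the distribution of the relevant random variable --- mean $s_n$, variance $\sum_i\lambda_{n,i}^{-2}\le s_n/\lambda_n$, and the separation identity --- the proof is repaired by replacing $\tau_n$ with the fastest strong stationary time throughout; but as stated both tools contradict the paper's own equation (\ref{eq-cts}). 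Two further points need attention: (i) the geometric-sum representation in discrete time requires all eigenvalues of $K_\delta$ to be nonnegative, which holds automatically only for $\delta\ge 1/2$; for $\delta\in(0,1/2)$ you must argue by comparison (e.g.\ with $K_{1/2}$ or with the continuous-time chain), and (ii) the unconditional comparison $T_{n,\textnormal{\tiny TV}}(\epsilon)\asymp s_n$ can only hold for $\epsilon\in(0,1/2)$ --- the paper's bottleneck example in Section 2.3 shows the order of $T_{n,\textnormal{\tiny TV}}(\epsilon)$ genuinely changes across $\epsilon=1/2$ --- though this suffices for the theorem since conditions (5) and (6) are ``for some $\epsilon$'' statements and the Markov upper bound $T_{n,\textnormal{\tiny TV}}(\epsilon)\lesssim s_n/\epsilon$ holds for all $\epsilon$.
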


\begin{thm}\cite[Section 4]{CSal12-2}\label{t-mixingcomp}
Referring to Theorem \ref{t-csal12-2}, it holds true that, for $\epsilon,\eta\in(0,1/2)$ and $\delta\in(0,1)$,
\[
 T_{n,\textnormal{\tiny TV}}^{(c)}(\epsilon)\asymp T_{n,\textnormal{\tiny TV}}^{(\delta)}(\eta).
\]
Further, if there is $\epsilon_0\in(0,1/2)$ such that $T_{n,\textnormal{\tiny TV}}^{(c)}(\epsilon_0)\lambda_n$ or $T_{n,\textnormal{\tiny TV}}^{(\delta)}(\epsilon_0)\lambda_n$ is bounded, then, for any $\epsilon\in(0,1/2)$ and $\delta\in(0,1)$,
\[
 T_{n,\textnormal{\tiny TV}}^{(c)}(\epsilon)\asymp T_{n,\textnormal{\tiny TV}}^{(\delta)}(\epsilon)\asymp \lambda_n^{-1}.
\]
\end{thm}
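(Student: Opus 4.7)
The plan is to prove the two assertions in sequence. First I would establish the comparison
$T^{(c)}_{n,\textnormal{\tiny TV}}(\epsilon)\asymp T^{(\delta)}_{n,\textnormal{\tiny TV}}(\eta)$
for any fixed $\epsilon,\eta\in(0,1/2)$ and $\delta\in(0,1)$, and then derive
$T^{(c)}_{n,\textnormal{\tiny TV}}(\epsilon)\asymp T^{(\delta)}_{n,\textnormal{\tiny TV}}(\epsilon)\asymp \lambda_n^{-1}$
under the boundedness hypothesis. The essential structural input is that a birth and death chain is reversible with tridiagonal transition matrix, so $K_n$ has real spectrum $1=\mu_{n,0}>\mu_{n,1}\ge\cdots\ge\mu_{n,n}\ge -1$ and admits a clean spectral decomposition on $L^2(\pi_n)$.

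For the first assertion I would use the common spectral representation of $H^{(n)}_t$ and $K_{n,\delta}^m$ and compare in $L^2$, which dominates $4d_{\textnormal{\tiny TV}}^2$. Writing $\lambda_{n,i}=1-\mu_{n,i}$,
\[
H^{(n)}_t-\pi_n=\sum_{i\ge 1}e^{-\lambda_{n,i}t}P_i,\qquad K_{n,\delta}^m-\pi_n=\sum_{i\ge 1}(1-(1-\delta)\lambda_{n,i})^m P_i.
\]
When $\delta\ge 1/2$ the scalars $1-(1-\delta)\lambda_{n,i}$ lie in $[0,1]$, so $(1-(1-\delta)\lambda_{n,i})^m\le e^{-m(1-\delta)\lambda_{n,i}}$ termwise, yielding $T^{(\delta)}_{n,\textnormal{\tiny TV}}(\eta)\le C_\delta\,T^{(c)}_{n,\textnormal{\tiny TV}}(\eta/2)$ after the standard $\chi^2$-to-TV passage; the range $\delta<1/2$ is reduced to the previous case by passing to $K_{n,\delta}^{2m}$, whose spectral coefficients are non-negative. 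The reverse inequality I would obtain by a Poissonization comparison: $H^{(n)}_t$ and $K_{n,\delta}^m$ are mixtures of the powers $K_n^j$ with Poisson$(t)$ and Binomial$(m,1-\delta)$ weights respectively, and when $t=m(1-\delta)$ the two mixing laws differ in total variation by $O(m^{-1/2})$ by a local central limit argument, which is negligible once $T^{(\delta)}_{n,\textnormal{\tiny TV}}(\eta)\to\infty$; the bounded-mixing regime is subsumed by the second assertion below.

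For the second assertion I would invoke two standard ingredients. The first is the submultiplicativity of the total variation distance, $d^{(c)}_{\textnormal{\tiny TV}}(s+t)\le 2\,d^{(c)}_{\textnormal{\tiny TV}}(s)\,d^{(c)}_{\textnormal{\tiny TV}}(t)$, which iterated gives $T^{(c)}_{n,\textnormal{\tiny TV}}(\epsilon)\le C(\epsilon,\epsilon_0)\,T^{(c)}_{n,\textnormal{\tiny TV}}(\epsilon_0)$ for every $\epsilon\in(0,1/2)$, so boundedness of $T^{(c)}_{n,\textnormal{\tiny TV}}(\epsilon_0)\lambda_n$ upgrades to boundedness for every $\epsilon$. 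The second is Lemma \ref{l-gapsing}, which supplies the matching lower bound $T^{(c)}_{n,\textnormal{\tiny TV}}(\epsilon)\ge -\log(2\epsilon)/\lambda_n$. Together these force $T^{(c)}_{n,\textnormal{\tiny TV}}(\epsilon)\asymp\lambda_n^{-1}$, and the lazy conclusion follows from the first assertion. If the hypothesis is posed for $T^{(\delta)}_{n,\textnormal{\tiny TV}}(\epsilon_0)$, I would first transfer it to the continuous case via the first assertion and then run the same argument.

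The main obstacle is the sharp comparison in the first part: the spectral inequality gives a clean bound only in one direction, and upgrading the Poisson-versus-Binomial heuristic to an honest total variation bound requires either a step-by-step coupling of the two mixing laws applied along the trajectory of $K_n$, or a careful interpolation between $\mu_{n,i}^m$ and $e^{-\lambda_{n,i}t}$ that separates the eigenvalues close to $-1$ from the rest. Reversibility and the real spectrum of birth and death chains are precisely what makes this interpolation tractable, and they also permit the reduction from arbitrary $\delta\in(0,1)$ to the clean case $\delta\in[1/2,1)$ through a parity/subdivision argument on the lazy parameter.
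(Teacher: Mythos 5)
First, a remark on scope: the paper does not prove Theorem \ref{t-mixingcomp} at all --- it is imported verbatim from \cite[Section 4]{CSal12-2} --- so there is no internal proof to compare against. Judged on its own merits, your second assertion is handled correctly (submultiplicativity of $\bar d$ gives $T^{(c)}_{n,\textnormal{\tiny TV}}(\epsilon)\le C(\epsilon,\epsilon_0)T^{(c)}_{n,\textnormal{\tiny TV}}(\epsilon_0)$ for $\epsilon<\epsilon_0$, and Lemma \ref{l-gapsing} supplies the matching lower bound), but it leans entirely on the first assertion to transfer between continuous and lazy time, and both of your mechanisms for that transfer have genuine gaps.

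The first gap is in the spectral step. Termwise domination of the coefficients gives
$\|K_{n,\delta}^m(x,\cdot)/\pi_n-1\|_{L^2(\pi_n)}\le\|H_{(1-\delta)m}(x,\cdot)/\pi_n-1\|_{L^2(\pi_n)}$,
and the $\chi^2$-to-TV passage then requires $(1-\delta)m$ to exceed the \emph{$L^2$ mixing time} of the continuous chain, not its total variation mixing time. For reversible chains there is no reverse inequality bounding the $L^2$ distance by the TV distance, and for birth and death chains the two mixing times can genuinely differ in order (the $L^2$ time typically carries an extra factor of order $\log(1/\pi_*)$; the chains $\hat M_{n,a}$ with $a\asymp n$ are an example). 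So the conclusion $T^{(\delta)}_{n,\textnormal{\tiny TV}}(\eta)\le C_\delta T^{(c)}_{n,\textnormal{\tiny TV}}(\eta/2)$ does not follow from this step. The second gap is the Poissonization claim: for fixed $\delta\in(0,1)$ the laws $\mathrm{Bin}(m,1-\delta)$ and $\mathrm{Poi}(m(1-\delta))$ have the same mean but variances $m(1-\delta)\delta$ and $m(1-\delta)$, so by the central limit theorem their total variation distance converges to a positive constant depending on $\delta$; it is not $O(m^{-1/2})$, and the mixture comparison collapses. The correct route to the cross-comparison --- the one used in \cite{DLP10,CSal12-2} and reflected in Theorems \ref{t-upper} and \ref{t-lower} of this paper --- avoids $L^2$ entirely: both $d^{(c)}_{\textnormal{\tiny TV}}$ and $d^{(\delta)}_{\textnormal{\tiny TV}}$ are controlled above and below by the tail of the hitting time of a quantile state from the endpoints; by Lemma \ref{l-passage} these hitting times have means differing by exactly the factor $1/(1-\delta)$, and by (\ref{eq-disc})--(\ref{eq-cts}) they are sums of independent exponential, respectively geometric, variables with the same eigenvalue parameters, so their concentration matches. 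I would rebuild the first assertion on that foundation.
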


\subsection{A remark on the precutoff}

Note that if there is no cutoff in total variation, the approximation in Theorem \ref{t-mixingcomp} may fail for $\epsilon\in(1/2,1)$. This means that, for $0<\epsilon<1/2<\eta<1$, the orders of $T_{n,\text{\tiny TV}}^{(c)}(\epsilon)$ and $T_{n,\text{\tiny TV}}^{(c)}(\eta)$ can be different. Consider the following example. For $n\ge 3$, let $\Omega_n=\{0,1,...,n\}$, $M_n=\lfloor n/2\rfloor$ and
\begin{equation}\label{eq-precut}
 \begin{cases}K_n(i,i+1)=K_n(i+1,i)=1/2\quad\text{for }0\le i<n,\,i\ne M_n\\K_n(M_n,M_n+1)=K_n(M_n+1,M_n)=\epsilon_n\\K_n(0,0)=K_n(n,n)=1/2\\K_n(M_n,M_n)=K_n(M_n+1,M_n+1)=1/2-\epsilon_n\end{cases},
\end{equation}
with $\epsilon_n\le 1/2$. Assume that $\epsilon_n=o(n^{-2})$. By Theorem \ref{t-btnk1}, we have
\[
 T_{n,\text{\tiny TV}}^{(c)}(\epsilon)\asymp T_{n,\text{\tiny TV}}^{(\delta)}(\epsilon)\asymp n/\epsilon_n,\quad\forall \epsilon\in(0,1/2),\,\delta\in(0,1).
\]

Next, we consider the $\delta$-lazy discrete time case with $\delta=1/2$. Let $K_{n,1/2}=(I+K_n)/2$ and $K_n'$ be the $1/2$-lazy simple random walk on $\{0,1,...,M_n\}$, that is,
\[
 \begin{cases}K_n'(i,i+1)=K_n'(i+1,i)=1/4,\quad\forall 0\le i<M_n\\K_n'(i,i)=1/2,\quad\forall 0<i<M_n\\K_n'(0,0)=K_n'(M_n,M_n)=3/4\end{cases}.
\]
For $n\ge 3$, set
\[
 c_n=\min_{0\le i,j\le M_n}\frac{K_{n,1/2}^{m_n}(i,j)}{(K'_n)^{m_n}(i,j)},\quad C_n=\max_{0\le i,j\le M_n}\frac{K_{n,1/2}^{m_n}(i,j)}{(K'_n)^{m_n}(i,j)}.
\]

\begin{prop}
If $m_n\asymp n^2$, then
\[
 c_n\ra 1,\quad C_n\ra 1,\quad \text{as }n\ra\infty.
\]
\end{prop}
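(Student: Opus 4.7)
The plan is a coupling argument combined with two-sided local heat-kernel estimates for the reflected lazy simple random walk on $\{0,\ldots,M_n\}$. Observe first that on the sub-interval $\{0,\ldots,M_n\}$ the kernels $K_{n,1/2}$ and $K_n'$ already coincide at every state except $M_n$: at $M_n$, both kernels assign mass $1/4$ to $M_n-1$ and the same total holding mass to $\{M_n,M_n+1\}$, but $K_{n,1/2}$ splits $\epsilon_n/2$ of that mass off to $M_n+1$ while $K_n'$ keeps all of it at $M_n$. Couple $X_k\sim K_{n,1/2}$ and $X_k'\sim K_n'$, both started at $i$, by using the same move at every state $\ne M_n$ and, at $M_n$, agreeing on the $1/4$-move to $M_n-1$ while marking the $\epsilon_n/2$-event ``jump to $M_n+1$'' as the only source of disagreement. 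Let $\tau=\inf\{k\ge 0:X_k=M_n+1\}$. Then $X_k=X_k'$ for $k<\tau$, so for every $i,j\in\{0,\ldots,M_n\}$,
\[
 \bigl|K_{n,1/2}^{m_n}(i,j)-(K_n')^{m_n}(i,j)\bigr|\le \mathbb{P}_i(\tau\le m_n).
\]

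Next I bound $\mathbb{P}_i(\tau\le m_n)$. Because $X$ and $X'$ agree strictly before $\tau$, the visits to $M_n$ of $X$ in $[0,\tau\wedge m_n]$ are the same as those of $X'$. A union bound on the at most $\mathbb{E}_i[N_{m_n}']$ attempts to cross, where $N_{m_n}'=\sum_{k=0}^{m_n}\mathbf{1}\{X_k'=M_n\}$, gives
\[
 \mathbb{P}_i(\tau\le m_n)\le\tfrac{\epsilon_n}{2}\sum_{k=0}^{m_n}(K_n')^k(i,M_n).
\]
The lazy reflected walk $K_n'$ admits the classical local heat-kernel bound
\[
 (K_n')^k(i,j)\le C\!\left(\frac{1}{\sqrt{k+1}}+\frac{1}{M_n+1}\right),
\]
uniformly in $i,j\in\{0,\ldots,M_n\}$ (by Nash/Davies estimates or direct expansion in the cosine eigenbasis). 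Summing and using $m_n\asymp n^2$ yields $\mathbb{E}_i[N_{m_n}']=O(\sqrt{m_n}+m_n/n)=O(n)$, and since $\epsilon_n=o(n^{-2})$,
\[
 \mathbb{P}_i(\tau\le m_n)=O(n\epsilon_n)=o(1/n).
\]

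To convert this into the ratio statement, I use the matching lower bound
\[
 (K_n')^{m_n}(i,j)\ge \frac{c}{n},\qquad\forall\, i,j\in\{0,\ldots,M_n\},
\]
valid for every $m_n\asymp n^2$ (from the same spectral/heat-kernel machinery: once $m_n$ reaches the relaxation time scale, all transition probabilities are within a bounded factor of the uniform mass $1/(M_n+1)$). Dividing,
\[
 \left|\frac{K_{n,1/2}^{m_n}(i,j)}{(K_n')^{m_n}(i,j)}-1\right|\le \frac{\mathbb{P}_i(\tau\le m_n)}{(K_n')^{m_n}(i,j)}=\frac{o(1/n)}{\Omega(1/n)}=o(1),
\]
uniformly in $i,j\in\{0,\ldots,M_n\}$, which is exactly $c_n\ra 1$ and $C_n\ra 1$.

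The main obstacle is the two-sided local heat-kernel control at the borderline scale $m_n\asymp n^2$: the upper bound needed in the crossing estimate must sum correctly (hence the $1/\sqrt{k+1}$ shape, not just $1/n$), and the lower bound must be uniform in the endpoints at a time where the chain has only just reached equilibrium up to constants. Both are classical for the reflected simple random walk on a finite interval, but spelling out uniformity in $i,j$ and in the precise constant hidden in $m_n\asymp n^2$ is the one piece that requires genuine care; everything else is a routine coupling and union bound.
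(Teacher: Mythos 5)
Your argument is correct, but it is genuinely different from the one in the paper. You couple the two chains so that they agree until the first $\epsilon_n/2$-jump from $M_n$ to $M_n+1$, bound $\mathbb{P}_i(\tau\le m_n)$ by $\tfrac{\epsilon_n}{2}\sum_k (K_n')^k(i,M_n)=O(n\epsilon_n)=o(1/n)$ via an on-diagonal upper bound, and then divide by a uniform Gaussian-type lower bound $(K_n')^{m_n}(i,j)\ge c/n$; all the steps are sound (the coupling inequality, the union bound over visits to $M_n$, and both heat-kernel estimates for the reflected lazy walk, e.g.\ via the cosine eigenbasis and the method of images). The paper instead proves the lower bound $c_n\ge(1-2\epsilon_n/3)^{m_n}\to 1$ by a one-line comparison of path weights (every step of a path inside $\{0,\dots,M_n\}$ has $K_{n,1/2}$-weight at least $(3/4-\epsilon_n/2)/(3/4)$ times its $K_n'$-weight), and gets the upper bound in the exact form $C_n\le 1$ for every $n$ from the unimodality/monotonicity of $K^\ell(i,\cdot)$ (Lemma 4.4 of \cite{DLP10}) via a stochastic-domination argument. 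The trade-off: the paper's route is entirely elementary and avoids any off-diagonal lower bound, which is the heaviest ingredient in your proof (and, as you note, the place where uniformity in $i,j$ and in the implied constants of $m_n\asymp n^2$ must be checked); your route is more robust, since it does not use the monotonicity structure of birth-and-death kernels and yields a quantitative rate $|c_n-1|+|C_n-1|=O(n^2\epsilon_n)$ from a single two-sided estimate. If you keep your version, you should either prove or precisely cite the two kernel bounds, in particular the lower bound at the diffusive time scale.
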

\begin{proof}
For $\ell\ge 1$, let $(i_0,i_1,...,i_\ell)$ be a path in $\{0,1,...,M_n\}$. Note that
\[
 \prod_{k=1}^\ell K_{n,1/2}(i_{k-1},i_k)\ge \left(\frac{3/4-\epsilon_n/2}{3/4}\right)^\ell\prod_{k=1}^\ell K_n'(i_{k-1},i_k).
\]
This implies $c_n\ge (1-2\epsilon_n/3)^{m_n}\sim 1$ as $n\ra\infty$. To see an upper bound of $C_n$, one may use Lemma 4.4 in \cite{DLP10} to conclude that, for $0\le i\le n$ and $\ell\ge 0$,
\[
 \begin{cases}K_{n,1/2}^\ell(i,j)\ge K_{n,1/2}^\ell(i,j-1)&\forall 1\le j\le i\\
 K_{n,1/2}^\ell(i,j)\ge K_{n,1/2}^\ell(i,j+1)&\forall i\le j<n\end{cases},
\]
and, for $0\le i\le M_n$ and $\ell\ge 0$,
\[
 \begin{cases}(K_n')^\ell(i,j)\ge (K_n')^\ell(i,j-1)&\forall 1\le j\le i\\
 (K_n')^\ell(i,j)\ge (K_n')^\ell(i,j+1)&\forall i\le j<M_n\end{cases}.
\]
By the induction, the above observation implies that, for any probabilities $\mu,\nu$ on $\{0,...,n\},\{0,...,M_n\}$ satisfying $\mu(i)=\nu(i)$ for $0\le i\le M_n$,
\[
 \mu K_{n,1/2}^\ell(j)\le \nu (K_n')^\ell(j),\quad\forall 0\le j\le M_n,\,\ell\ge 0.
\]
This yields $C_n\le 1$ for all $n\ge 3$.
\end{proof}

For $\epsilon\in(0,1)$, let $T_{n,\text{\tiny TV}}'(\epsilon)$ be the total variation mixing time for $K_n'$. It is well-known that, for $\epsilon\in(0,1)$, $T_{n,\text{\tiny TV}}'(\epsilon)\asymp n^2$. Let $d_{n,\text{\tiny TV}}^{(1/2)},d_{n,\text{\tiny TV}}'$ be the total variation distance for $K_{n,1/2},K_n'$. As a consequence of the above discussion, we obtain, for $\epsilon\in(0,1)$,
\[
 \limsup_{n\ra\infty}d_{n,\text{\tiny TV}}^{(1/2)}(T_{n,\text{\tiny TV}}'(\epsilon))\le\frac{1}{2}\left(1+\limsup_{n\ra\infty}d_{n,\text{\tiny TV}}'(T_{n,\text{\tiny TV}}'(\epsilon)\right)\le\frac{1+\epsilon}{2}.
\]
Thus, for $\epsilon\in(1/2,1)$, $T_{n,\text{\tiny TV}}^{(1/2)}(\epsilon)=O(n^2)$. Note that, for $m_n=o(n^2)$,
\[
 \lim_{n\ra\infty}\sum_{i\le an}K_{n,1/2}^{m_n}(0,i)=1,\quad\forall a>0.
\]
This yields $n^2=O(T_{n,\text{\tiny TV}}^{(1/2)}(\epsilon))$ for $\epsilon>0$. The above discussion is also valid for the continuous time case and any $\delta$-lazy discrete time case. We summarizes the results in the following theorem.

\begin{thm}
Let $\mathcal{F}=\{(\Omega_n,K_n,\pi_n)|n=1,2,...\}$ be the family of birth and death chains in \textnormal{(\ref{eq-precut})} and $\delta\in(0,1)$. Suppose that $\epsilon_n=o(n^{-2})$. Then, there is no total variation cutoff for $\mathcal{F}_c$ and $\mathcal{F}_\delta$. Furthermore, for $\epsilon\in(0,1/2)$,
\[
 T_{n,\textnormal{\tiny TV}}^{(c)}(\epsilon)\asymp T_{n,\textnormal{\tiny TV}}^{(\delta)}(\epsilon)\asymp n/\epsilon_n,
\]
and, for $\epsilon\in(1/2,1)$,
\[
 T_{n,\textnormal{\tiny TV}}^{(c)}(\epsilon)\asymp T_{n,\textnormal{\tiny TV}}^{(\delta)}(\epsilon)\asymp n^2.
\]
\end{thm}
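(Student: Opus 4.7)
The plan is to handle the two regimes $\epsilon\in(0,1/2)$ and $\epsilon\in(1/2,1)$ separately, then to rule out (pre)cutoff by showing the resulting orders disagree. For $\epsilon\in(0,1/2)$ I would specialize Theorem \ref{t-btnk1} to the family $\mathcal{F}$ with $k_n=1$ and bottleneck position $x_{n,1}=M_n+1$, so that the slow edge between $M_n$ and $M_n+1$ is the single obstruction. A direct calculation yields $a_n=\min\{M_n+1,n-M_n\}\asymp n$ (and $b_n\asymp n$), so that $t_n=n^2+a_n/\epsilon_n\asymp n/\epsilon_n$, since the hypothesis $\epsilon_n=o(n^{-2})$ forces $a_n/\epsilon_n\gg n^3\gg n^2$. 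Theorem \ref{t-btnk1} then gives $T_{n,\textnormal{\tiny TV}}^{(c)}(\epsilon)\asymp T_{n,\textnormal{\tiny TV}}^{(\delta)}(\epsilon)\asymp n/\epsilon_n$ for every $\delta\in(0,1)$ and every such $\epsilon$.

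For $\epsilon\in(1/2,1)$ the order $n^2$ is essentially already established, for $\delta=1/2$, in the Proposition and the paragraph following it: the path-product comparison produces $c_n,C_n\to 1$ whenever $m_n\asymp n^2$, and combining this with the $n^2$ mixing of the lazy simple random walk $K_n'$ on $\{0,\ldots,M_n\}$ bounds $\|K_{n,1/2}^{m_n}(0,\cdot)-\pi_n\|_{\textnormal{\tiny TV}}$ above by $(1+\eta)/2$ for any prescribed $\eta\in(0,1)$, giving $T_{n,\textnormal{\tiny TV}}^{(1/2)}(\epsilon)=O(n^2)$ for $\epsilon\in(1/2,1)$; the matching lower bound uses that for $m_n=o(n^2)$ the lazy walk from $0$ concentrates on $[0,an]$ for every $a>0$, so that $d_{n,\textnormal{\tiny TV}}^{(1/2)}(m_n)\to 1$. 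To extend from $\delta=1/2$ to general $\delta\in(0,1)$ and to the continuous-time chain I would repeat the path-product comparison against the $\delta$-lazy simple random walk on $\{0,\ldots,M_n\}$; at $m_n\asymp n^2$ steps the path ratio is controlled by $(1-(1-\delta)\epsilon_n/\delta)^{m_n}\to 1$ under $\epsilon_n=o(n^{-2})$, and the monotonicity facts from \cite[Lemma 4.4]{DLP10} are insensitive to $\delta$. The continuous-time case is identical after replacing $K_{n,\delta}^{m_n}$ by the semigroup $H_t^{(n)}$ and Poissonizing the step count.

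Finally, combining the two regimes: for any $0<\epsilon<1/2<\eta<1$,
\[
 \frac{\max\{T_{n,\textnormal{\tiny TV}}^{(c)}(\eta),T_{n,\textnormal{\tiny TV}}^{(\delta)}(\eta)\}}{\min\{T_{n,\textnormal{\tiny TV}}^{(c)}(\epsilon),T_{n,\textnormal{\tiny TV}}^{(\delta)}(\epsilon)\}}\asymp\frac{n^2}{n/\epsilon_n}=n\epsilon_n\to 0
\]
under the assumption $\epsilon_n=o(n^{-2})$, which is incompatible with the precutoff requirement of Definition \ref{d-cutoff}; hence neither $\mathcal{F}_c$ nor $\mathcal{F}_\delta$ presents a total variation cutoff.

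The principal obstacle is the generalization of the $\delta=1/2$ comparison to arbitrary $\delta$ and to continuous time, because one must verify that the path-product constant stays close to $1$ at the $n^2$ time scale and that the monotonicity lemmas transfer unchanged; both are routine once tracked carefully and constitute the only nontrivial new input beyond what the excerpt already contains.
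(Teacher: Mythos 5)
Your proposal is correct and follows essentially the same route as the paper: Theorem \ref{t-btnk1} with a single mid-path bottleneck gives the $n/\epsilon_n$ order for $\epsilon\in(0,1/2)$, the comparison with the lazy simple random walk on $\{0,\ldots,M_n\}$ gives the $n^2$ order for $\epsilon\in(1/2,1)$, and the absence of cutoff follows (the paper leaves the extension to general $\delta$ and to continuous time at the level of a remark, just as you do). One small caution: the mismatch of orders across $\epsilon=1/2$ refutes cutoff in the sense of (\ref{eq-cutmix}), but not precutoff as in Definition \ref{d-cutoff}(1) --- so phrase the final step as contradicting cutoff (or invoke $a_n/b_n\not\to\infty$ from Theorem \ref{t-btnk1}) rather than the ``precutoff requirement.''
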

\begin{rem}\label{r-precut}
Figure \ref{precut} displays the total variaton distances of the birth and death chains on $\{1,2,...,100\}$ with transition matrices $K_1$ and $K_2$ given by
\[
 \begin{cases}K_1(i,i)=1/2,&\text{for }i\notin\{1,50,51,100\}\\K_1(i,i+1)=K_1(i+1,i)=1/4,&\text{for $i<50$ or $i>51$}\\K_1(i,i)=3/4&\text{for }k\in\{1,100\}\\K_1(i,i+1)=K_1(i+1,i)=10^{-3}&\text{for }k=50\\K_1(i,i)=K_1(i,i)=3/4-10^{-3}&\text{for }i\in\{50,51\}\\K_1(i,j)=0&\text{otherwise}\end{cases}
\]
and
\[
 \begin{cases}K_2(i,i+1)=K_2(i+1,i)=10^{-2}&\text{for }i=25\\K_2(i,i)=3/4-10^{-2}&\text{for }i\in\{25,26\}\\K_2(i,j)=K_1(i,j)&\text{otherwise}
\end{cases}.
\]
Note that each curve has only one sharp transition for $d_{\text{\tiny TV}}(t)\le 1/2$. This is consistent with Theorem \ref{t-bdc-cut-main}. These examples show that multiple sharp transitions may occur for $d_{\text{\tiny TV}}(t)>1/2$. Note also that the flat part of the curves occupy very large time regions. For instance, the left most curve stays near the value $1/2$ for $t$ between $10^3$ and $10^6$.
\end{rem}

\begin{figure}[ht]
\includegraphics[width=6cm]{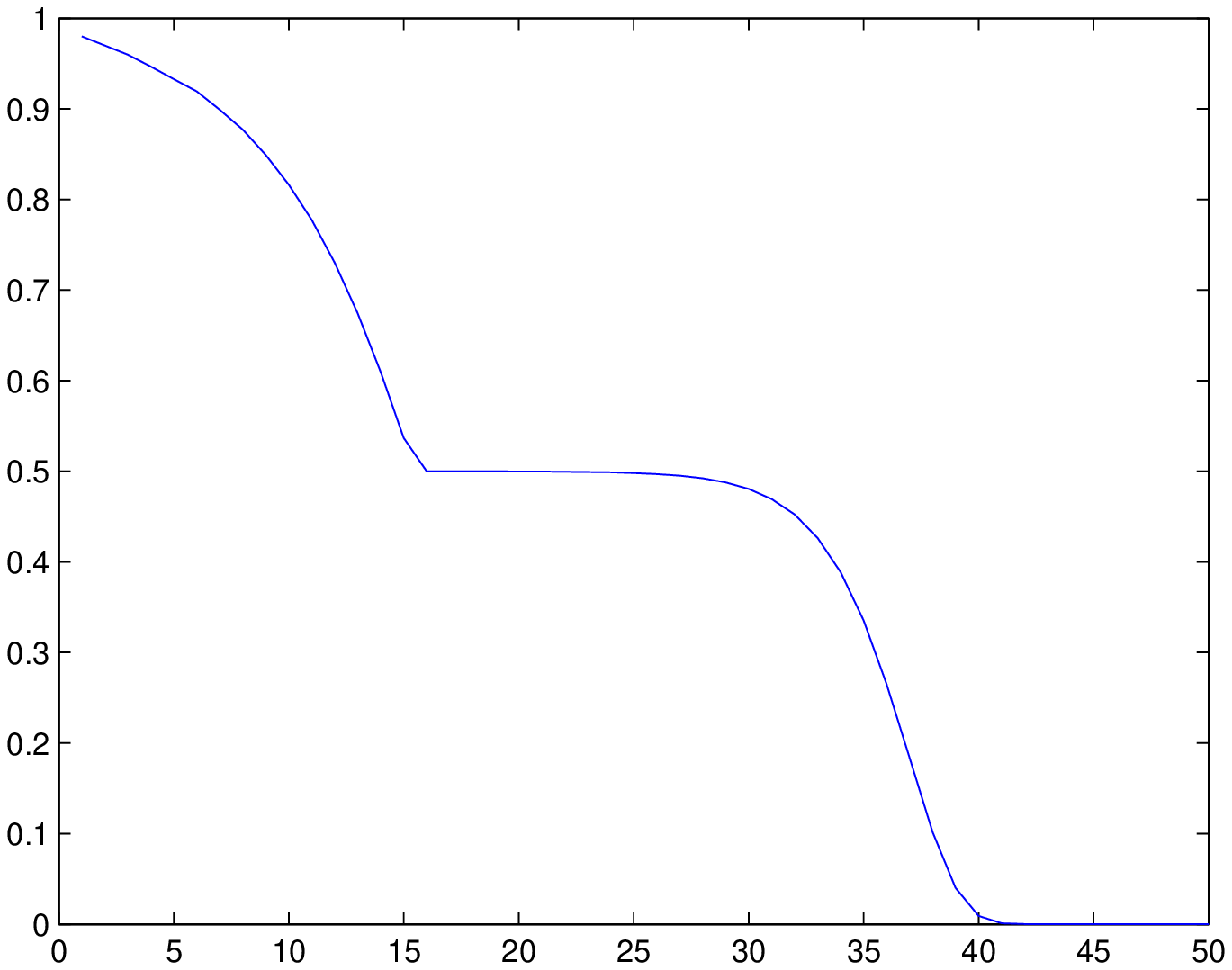}\includegraphics[width=6cm]{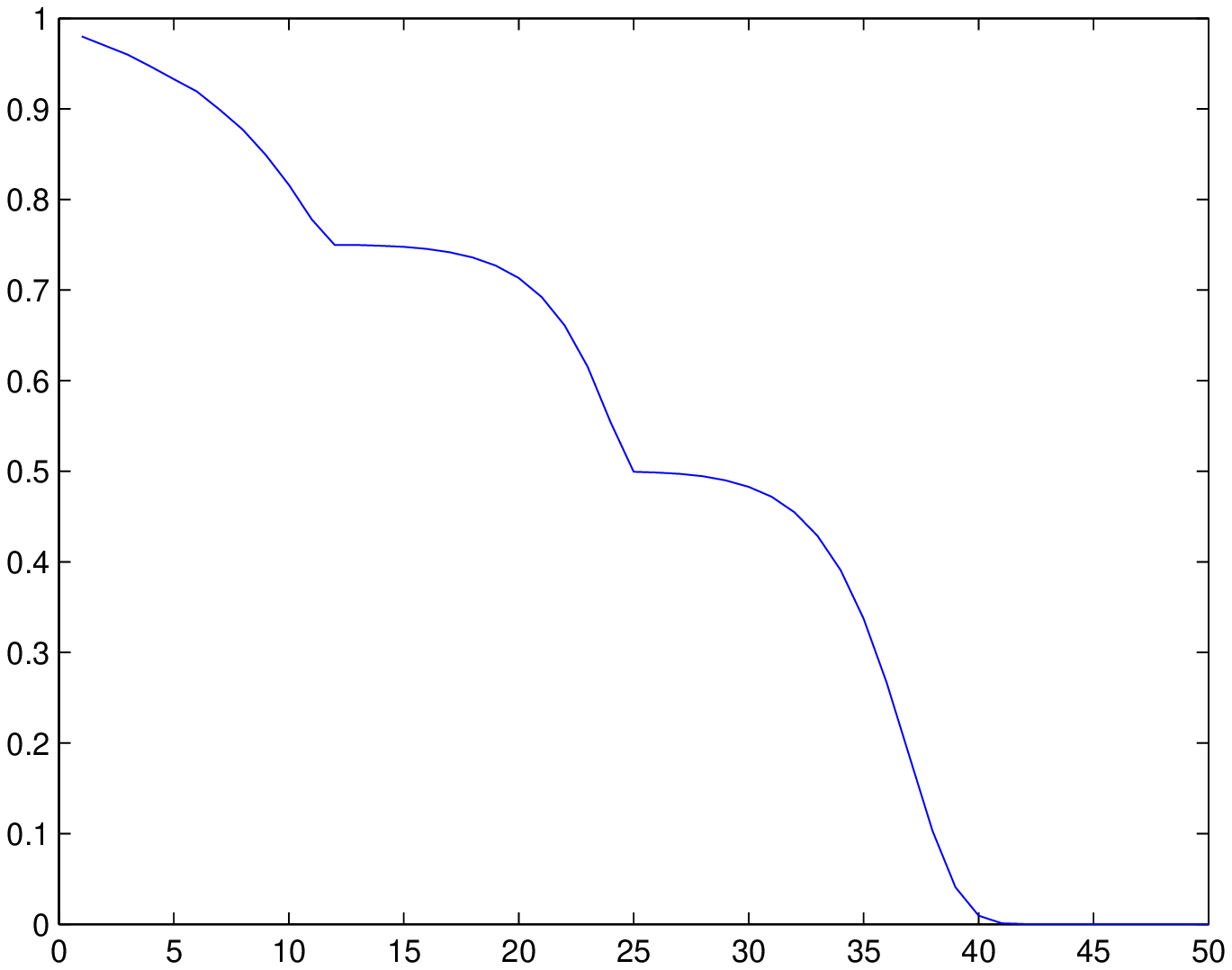}
\caption{The curves display the total variation distance of the chains in Remark \ref{r-precut}, where the left most curve is for $K_1$ and the right most curve is for $K_2$. The curve consists of the points $(m,d_\text{\tiny TV}(100^{\lfloor 0.1\times m\rfloor}))$ with $m=1,2,...,50$. The right most point of each curve corresponds to $d_{\text{\tiny TV}}(t)$ with $t=10^{10}$.}
\label{precut}
\end{figure}

\section{Bounds for mixing time and spectral gap}
This section is dedicated to proving Theorems \ref{t-mixingtime} and \ref{t-spectralgap}. In the first two subsections, we treat respectively the upper and lower bounds of the total variation mixing time. This leads to Theorem \ref{t-mixingtime}. In the third subsection, we provide a relaxation of the choice of $i_n$ in Theorem \ref{t-bdc-cut-main}. In the last subsection, we introduce a bound on the spectral gap which includes Theorem \ref{t-spectralgap}.

\subsection{An upper bound of the mixing time}
Let $(\Omega,K,\pi)$ be an irreducible birth and death chain, where $\Omega=\{0,1,...,n\}$ and $K$ has birth rate $p_i$, death rate $q_i$ and holding rate $r_i$. Let $(X_m)_{m=0}^\infty$ be a realization of the discrete time chain. Obviously, if $N_t$ is a Poisson process with parameter $1$ and independent of $(X_m)_{m=0}^\infty$, then $(X_{N_t})_{t\ge 0}$ is a realization of the continuous time chain. For $\delta\in[0,1)$, if $(B_m^{(\delta)})_{m=1}^\infty$ is a sequence of independent Bernoulli($1-\delta$) trials which are independent of $(X_m)_{m=0}^\infty$, then $Y_m^{(\delta)}=X_{B_1^{(\delta)}+\cdots+B_m^{(\delta)}}$ is a realization of the $\delta$-lazy chain. For $0\le i\le n$, we define the first passage time to $i$ by
\begin{equation}\label{eq-passage}
 \widetilde{\tau}_i:=\inf\{t\ge 0|X_{N_t}=i\},\quad\tau_i^{(\delta)}:=\min\{m\ge 0|Y_m=i\},
\end{equation}
and simply put $\tau_i:=\tau_i^{(0)}=\min\{m\ge 0|X_m=i\}$. Briefly, we write $\mathbb{P}_i(\cdot)$ for $\mathbb{P}(\cdot|X_0=i)$ and write $\mathbb{E}_i,\text{Var}_i$ as the expectation and variance under $\mathbb{P}_i$. The main result of this subsection is as follows.

\begin{thm}[Upper bound]\label{t-upper}
Let $(\Omega,K,\pi)$ be an irreducible birth and death chain with $\Omega=\{0,1,...,n\}$. Let $\tau_i=\tau_i^{(0)}$ be the first passage time to $i$ defined in \textnormal{(\ref{eq-passage})}. For $\epsilon\in(0,1)$ and $\delta\in[1/2,1)$,
\begin{equation}\label{eq-uppermix}
 \max\left\{T_{\textnormal{\tiny TV}}^{(c)}(\epsilon),(1-\delta)T_{\textnormal{\tiny TV}}^{(\delta)}(\epsilon)\right\}\le\frac{9(\mathbb{E}_0\tau_{i_0}+\mathbb{E}_n\tau_{i_0})}{\epsilon^2},
\end{equation}
where $i_0\in\{0,...,n\}$ satisfies $\pi([0,i_0-1])\le 1/2$ and $\pi([i_0+1,n])\le 1/2$.
\end{thm}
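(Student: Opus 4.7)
I would combine a strong-Markov decomposition of the TV distance at $\tau_{i_0}$ with a sharp $L^2$-based bound on the TV distance starting from the median state $i_0$; the $\epsilon^{-2}$ dependence comes directly from the Cauchy--Schwarz step. By the stochastic monotonicity of birth and death chains, $\|H_t(x,\cdot)-\pi\|_{\text{\tiny TV}}$ is maximized at $x\in\{0,n\}$, so it is enough to bound the distance from these two extreme starting states.

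\textbf{Strong-Markov decomposition.} Using the strong Markov property at $\tau_{i_0}$ together with the well-known fact that $s\mapsto\|H_s(i_0,\cdot)-\pi\|_{\text{\tiny TV}}$ is nonincreasing, one has, for any $0<s<t$,
\[
 \|H_t(x,\cdot)-\pi\|_{\text{\tiny TV}} \le \mathbb{P}_x(\tau_{i_0}>t-s) + \|H_s(i_0,\cdot)-\pi\|_{\text{\tiny TV}}.
\]
Taking $s=t/2$ and applying Markov's inequality to the first term gives $\mathbb{P}_x(\tau_{i_0}>t/2)\le 2\mathbb{E}_x[\tau_{i_0}]/t$, reducing the problem to a sharp estimate for the TV distance at time $t/2$ from $i_0$.

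\textbf{$L^2$ estimate from the median.} For the second term, Cauchy--Schwarz together with reversibility yields
\[
 4\|H_{t/2}(i_0,\cdot)-\pi\|_{\text{\tiny TV}}^2 \le \chi^2(H_{t/2}(i_0,\cdot),\pi) = \frac{H_t(i_0,i_0)}{\pi(i_0)} - 1.
\]
The critical input is the classical Green's-function identity $\int_0^\infty(H_u(i_0,i_0)-\pi(i_0))\,du=\pi(i_0)\,\mathbb{E}_\pi[\tau_{i_0}]$, which together with the complete monotonicity (hence pointwise decrease) of $u\mapsto H_u(i_0,i_0)-\pi(i_0)$ gives
\[
 \frac{H_t(i_0,i_0)}{\pi(i_0)} - 1 \le \frac{\mathbb{E}_\pi[\tau_{i_0}]}{t}.
\]
Since $\mathbb{E}_x[\tau_{i_0}]$ is largest at $x=0$ for $x<i_0$ and at $x=n$ for $x>i_0$ (by monotonicity), and $\pi([0,i_0-1])\le 1/2$, $\pi([i_0+1,n])\le 1/2$ by the choice of $i_0$, one obtains $\mathbb{E}_\pi[\tau_{i_0}]\le\tfrac12(\mathbb{E}_0[\tau_{i_0}]+\mathbb{E}_n[\tau_{i_0}])$. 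Plugging in $t=9(\mathbb{E}_0[\tau_{i_0}]+\mathbb{E}_n[\tau_{i_0}])/\epsilon^2$ makes the Markov term $O(\epsilon^2)$ and the $\chi^2$ term at most $\epsilon/6$, delivering the bound on $T^{(c)}_{\text{\tiny TV}}(\epsilon)$.

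\textbf{Lazy chain and main obstacle.} For the $\delta$-lazy chain with $\delta\in[1/2,1)$, I would Poissonize: the $\delta$-lazy chain at step $m$ has the same law as $X_{B_1^{(\delta)}+\cdots+B_m^{(\delta)}}$ with $B_i^{(\delta)}$ i.i.d.\ $\mathrm{Bernoulli}(1-\delta)$. This Binomial sum concentrates near $(1-\delta)m$, and a standard comparison between binomial and Poisson sums transfers the continuous-time estimate to the lazy chain, producing the $(1-\delta)$ prefactor in front of $T^{(\delta)}_{\text{\tiny TV}}$. The main obstacle is the $L^2$/Green's-function step above: justifying the identity by reversibility, exploiting complete monotonicity to convert it to a pointwise bound, and using the BD structure to bound $\mathbb{E}_\pi[\tau_{i_0}]$ by hitting times from the two extremes. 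Once these ingredients are in place, the constants $9$ and $\epsilon^{-2}$ fall out of straightforward bookkeeping.
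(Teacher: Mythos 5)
Your continuous-time argument is correct but follows a genuinely different route from the paper. The paper proves Proposition \ref{p-upper} via a no-crossing coupling, obtaining $d^{(c)}_{\text{\tiny TV}}(t)\le(\mathbb{E}_0\widetilde\tau_k+\mathbb{E}_n\widetilde\tau_j)/t+1-\pi([j,k])$, then chooses $j_\epsilon,k_\epsilon$ to be $\epsilon/3$ and $1-\epsilon/3$ quantiles and uses Corollary \ref{c-tv} to pull $\mathbb{E}_0\tau_{k_\epsilon},\mathbb{E}_n\tau_{j_\epsilon}$ back to hitting times of the median at the cost of a factor $3/\epsilon$; the $\epsilon^{-2}$ arises as $(3/\epsilon)\times(3/\epsilon)$. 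You instead decompose at $\tau_{i_0}$ and control $\|H_{t/2}(i_0,\cdot)-\pi\|_{\text{\tiny TV}}$ by $\chi^2$ plus the fundamental-matrix identity $\int_0^\infty(H_u(i_0,i_0)-\pi(i_0))\,du=\pi(i_0)\mathbb{E}_\pi[\tau_{i_0}]$ and the monotonicity of $u\mapsto H_u(i_0,i_0)$ (valid by reversibility: the return probability is a nonnegative combination of $e^{-\lambda_j u}$). All the individual steps check out, including $\mathbb{E}_\pi[\tau_{i_0}]\le\tfrac12(\mathbb{E}_0\tau_{i_0}+\mathbb{E}_n\tau_{i_0})$ and the numerics at $t=9(\mathbb{E}_0\tau_{i_0}+\mathbb{E}_n\tau_{i_0})/\epsilon^2$. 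Note that you do not actually need the claim that the worst starting state is $0$ or $n$: your decomposition bounds $\mathbb{P}_x(\tau_{i_0}>t/2)$ uniformly in $x$ since $\mathbb{E}_x\tau_{i_0}\le\max\{\mathbb{E}_0\tau_{i_0},\mathbb{E}_n\tau_{i_0}\}$ by the additivity of one-dimensional hitting times. Your approach is arguably more robust (it needs no coupling construction), at the cost of invoking the Green's-function identity.

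The gap is in the lazy-chain transfer. The ``standard comparison between binomial and Poisson sums'' does not do what you want: with $p=1-\delta\in(0,1/2]$ fixed, $\|\mathrm{Bin}(m,p)-\mathrm{Poisson}(mp)\|_{\text{\tiny TV}}$ is of order $p$, a constant, so bounding $\|K_\delta^m(x,\cdot)-H_{(1-\delta)m}(x,\cdot)\|_{\text{\tiny TV}}$ by the total variation distance between the random time-changes yields an error that does not vanish and destroys the $\epsilon$-dependence. The fix is to abandon Poissonization and run your argument directly in discrete time: monotonicity of $m\mapsto\|K_\delta^m(x,\cdot)-\pi\|_{\text{\tiny TV}}$ and the strong Markov decomposition hold verbatim; reversibility gives $4\|K_\delta^m(i_0,\cdot)-\pi\|_{\text{\tiny TV}}^2\le K_\delta^{2m}(i_0,i_0)/\pi(i_0)-1$; for $\delta\ge1/2$ all eigenvalues of $K_\delta$ are nonnegative, so $m\mapsto K_\delta^{m}(i_0,i_0)-\pi(i_0)$ is nonnegative and nonincreasing and the discrete fundamental-matrix identity $\sum_{m\ge0}(K_\delta^m(i_0,i_0)-\pi(i_0))=\pi(i_0)\mathbb{E}_\pi[\tau_{i_0}^{(\delta)}]$ applies; finally $\mathbb{E}_x\tau_{i_0}^{(\delta)}=\mathbb{E}_x\tau_{i_0}/(1-\delta)$ (Lemma \ref{l-passage}) produces exactly the $(1-\delta)$ prefactor in (\ref{eq-uppermix}). (The paper handles this case differently again, via the discrete no-crossing coupling of \cite{DLP10} and the identity $K_\delta=(K_{2\delta-1})_{1/2}$.) With that repair your proof is complete.
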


\begin{rem}\label{r-upper}
The authors of \cite{CSal12-2} obtain a slightly improved upper bound similar to (\ref{eq-uppermix}), which says that
\[
 \max\left\{T_{\textnormal{\tiny TV}}^{(c)}(\epsilon),(1-\delta)T_{\textnormal{\tiny TV}}^{(\delta)}(\epsilon)\right\}\le\frac{(\sqrt{\epsilon}+\sqrt{1-\epsilon})
 (\mathbb{E}_0\tau_{i_0}+\mathbb{E}_n\tau_{i_0})}{\sqrt{\epsilon}}.
\]
Comparing with (\ref{eq-uppermix}), the above inequality has an improved dependence on $\epsilon$.
\end{rem}

To understand the right side of (\ref{eq-uppermix}), we introduce the following lemma.

\begin{lem}\label{l-passage}
Referring to the setting in \textnormal{(\ref{eq-passage})}, it holds true that, for $i<j$, $\mathbb{E}_i(\tau_j^{(\delta)})=\mathbb{E}_i(\tau_j)/(1-\delta)$ and
$\mathbb{E}_i(\tau_j)=\mathbb{E}_i(\widetilde{\tau}_j)=\sum_{k=i}^{j-1}\pi([0,k])/(p_k\pi(k))$.
\end{lem}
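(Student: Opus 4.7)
\textbf{Proof plan for Lemma \ref{l-passage}.} The three identities are essentially (i) a Wald-type time-change, (ii) another Wald-type time-change, and (iii) the classical harmonic computation for birth and death chains. The plan is to handle them in this order.

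First, for the identity $\mathbb{E}_i(\tau_j^{(\delta)})=\mathbb{E}_i(\tau_j)/(1-\delta)$, I would write
\[
 \tau_j^{(\delta)}=\inf\Bigl\{m\ge 0:\sum_{\ell=1}^m B_\ell^{(\delta)}=\tau_j\Bigr\},
\]
using the coupling $Y_m^{(\delta)}=X_{B_1^{(\delta)}+\cdots+B_m^{(\delta)}}$. If $G_1,G_2,\dots$ denote the successive inter-success gaps of the Bernoulli process $B^{(\delta)}$, these are i.i.d.\ geometric with mean $1/(1-\delta)$ and are independent of $(X_m)$; since $\tau_j$ is a stopping time for $(X_m)$, Wald's identity gives $\mathbb{E}_i(\tau_j^{(\delta)})=\mathbb{E}_i(\sum_{k=1}^{\tau_j}G_k)=\mathbb{E}_i(\tau_j)/(1-\delta)$. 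For $\mathbb{E}_i(\widetilde\tau_j)=\mathbb{E}_i(\tau_j)$, the analogous argument with $N_t$ a rate-$1$ Poisson process independent of $(X_m)$ gives $\widetilde\tau_j=\sum_{k=1}^{\tau_j}E_k$ where $E_k$ are the i.i.d.\ unit exponential holding times, and Wald yields the equality.

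Next, for the closed form, I would use the strong Markov property at the successive hitting times of $i+1,i+2,\dots,j$ to write $\mathbb{E}_i(\tau_j)=\sum_{k=i}^{j-1}h_k$ with $h_k:=\mathbb{E}_k(\tau_{k+1})$. A one-step analysis at $k\ge 1$ gives
\[
 h_k=p_k\cdot 1+r_k(1+h_k)+q_k(1+h_{k-1}+h_k),
\]
where I used strong Markov to split $\mathbb{E}_{k-1}(\tau_{k+1})=h_{k-1}+h_k$. Simplifying yields the recursion $p_k h_k=1+q_k h_{k-1}$, valid for $k\ge 1$, with the initial value $h_0=1/p_0$ (obtained either from the recursion with $q_0=0$ or directly). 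I would then multiply by $\pi(k)$ and set $v_k:=p_k\pi(k)h_k$, so that the detailed balance relation $q_k\pi(k)=p_{k-1}\pi(k-1)$ turns the recursion into $v_k=\pi(k)+v_{k-1}$. Iterating from $v_0=\pi(0)$ yields $v_k=\pi([0,k])$ and hence $h_k=\pi([0,k])/(p_k\pi(k))$, which after summing gives the claimed formula.

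The only mildly subtle point is the second Wald application, since $\tau_j$ need not have finite mean a priori; however, finiteness of $\mathbb{E}_i(\tau_j)$ is guaranteed by irreducibility on a finite state space and is in fact confirmed by the explicit formula, so the interchange is legitimate (one may truncate at $\tau_j\wedge M$ and let $M\to\infty$ by monotone convergence). The rest is routine and I do not expect any real obstacle.
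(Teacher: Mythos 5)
Your proposal is correct and follows the same route the paper intends: the paper simply invokes the strong Markov property, cites \cite[Proposition 2]{BBF09} for the discrete-time formula, and reduces the continuous-time case to the identity $\{\widetilde{\tau}_i>t\}=\{\tau_i>N_t\}$, which is exactly your Poisson/geometric time-change argument. Your one-step recursion $p_kh_k=1+q_kh_{k-1}$ combined with detailed balance is the standard computation behind the cited reference, and your handling of the Wald step (truncation plus monotone convergence, with finiteness guaranteed by the finite state space) is sound.
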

\begin{proof}
The proof is based on the strong Markov property. See \cite[Proposition 2]{BBF09} for a reference on the discrete time case, whereas the continuous time case is an immediate result of the fact $\{\widetilde{\tau}_i>t\}=\{\tau_i>N_t\}$.
\end{proof}

\begin{rem}
By Theorem \ref{t-upper} and Lemma \ref{l-passage}, the total variation mixing time for the continuous time and the $\delta$-lazy, with $\delta\ge 1/2$, discrete time birth and death chain on $\{0,1,...,n\}$ are bounded above by the following term up to a multiple constant.
\[
 \sum_{k=0}^{i_0-1}\frac{\pi([0,k])}{p_k\pi(k)}+\sum_{k=i_0+1}^n\frac{\pi([k,n])}{q_k\pi(k)},
\]
where $i_0\in\{0,...,n\}$ satisfies $\pi([0,i_0-1])\le 1/2$ and $\pi([i_0+1,n])\le 1/2$.
\end{rem}

\begin{rem}
In Theorem \ref{t-upper}, $i_0$ is unique if $\pi([0,i])\ne 1/2$ for all $0\le i\le n$. If $\pi([0,j])=1/2$, then $i_0$ can be $j$ or $j+1$, but the right side of (\ref{eq-uppermix}) is the same in either case using Lemma \ref{l-passage}.
\end{rem}

\begin{rem}\label{r-gap}
Let $K$ be an irreducible birth and death chain with birth, death and holding rates $p_i,q_i,r_i$ and stationary distribution $\pi$. Let $\lambda$ be the spectral gap of $K$. As a consequence of Lemma \ref{l-gapsing} and theorem \ref{t-upper}, we obtain, for $\epsilon\in(0,1/2)$,
\[
 \lambda\ge\frac{\epsilon^2\log(1/(2\epsilon))}{9}\left(\sum_{k=0}^{i_0-1}\frac{\pi([0,k])}{p_k\pi(k)}+\sum_{k=i_0+1}^n\frac{\pi([k,n])}{q_k\pi(k)}\right)^{-1},
\]
where $i_0$ is such that $\pi([0,i_0-1])\le 1/2$ and $\pi([i_0+1,n])\le 1/2$. The maximum of $\epsilon^2\log(1/(2\epsilon))$ on $(0,1/2)$ is attained at $\epsilon=1/(2\sqrt{e})$ and equal to $1/(8e)$. A similar lower bound of the spectral gap is also derived in \cite{CSal12} with improved constant.
\end{rem}

As a simple application of Lemma \ref{l-passage}, we have
\begin{cor}\label{c-tv}
Referring to Lemma \ref{l-passage}, for $i\le j$,
\[
 \mathbb{E}_i\tau_j\le \left(\frac{1}{\pi([j,n])}-1\right)\mathbb{E}_n\tau_i.
\]
\end{cor}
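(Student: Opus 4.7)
The plan is to apply Lemma \ref{l-passage} twice and then compare the two sums term-by-term after normalizing their denominators. By Lemma \ref{l-passage}, for $i\le j$,
\[
 \mathbb{E}_i\tau_j=\sum_{k=i}^{j-1}\frac{\pi([0,k])}{\pi(k)p_k}.
\]
I would next derive an analogous expression for $\mathbb{E}_n\tau_i$ by running the chain downward one step at a time: either one invokes the symmetric version of Lemma \ref{l-passage} applied to the reflected chain and then uses the birth-and-death reversibility identity $\pi(k)p_k=\pi(k+1)q_{k+1}$ to rewrite everything over the common denominator $\pi(k)p_k$, or one computes $\mathbb{E}_k\tau_{k-1}=\pi([k,n])/(\pi(k)q_k)$ directly and telescopes. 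Either way, one obtains
\[
 \mathbb{E}_n\tau_i=\sum_{k=i}^{n-1}\frac{\pi([k+1,n])}{\pi(k)p_k}.
\]

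The second step is the pointwise comparison on the index range $k\in\{i,i+1,\ldots,j-1\}$. For such $k$, monotonicity of $\pi([0,\cdot])$ and $\pi([\cdot,n])$ in their endpoints gives
\[
 \pi([0,k])\le\pi([0,j-1])=1-\pi([j,n]),\qquad \pi([k+1,n])\ge\pi([j,n]).
\]
Hence, setting $S=\sum_{k=i}^{j-1}1/(\pi(k)p_k)$, the first formula above yields $\mathbb{E}_i\tau_j\le(1-\pi([j,n]))S$, while the second (after dropping the nonnegative tail $k\ge j$) yields $\mathbb{E}_n\tau_i\ge\pi([j,n])S$. Dividing gives
\[
 \frac{\mathbb{E}_i\tau_j}{\mathbb{E}_n\tau_i}\le\frac{1-\pi([j,n])}{\pi([j,n])}=\frac{1}{\pi([j,n])}-1,
\]
as required.

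The only mildly delicate point is writing both expectations over the same sequence of weights $1/(\pi(k)p_k)$; the reversibility identity $\pi(k)p_k=\pi(k+1)q_{k+1}$ is exactly what makes the two hitting-time sums comparable term by term. Once that index-shift bookkeeping is done, the rest of the argument is a one-line monotonicity estimate, so I would expect this to be the main (and essentially only) obstacle.
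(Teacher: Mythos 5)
Your proof is correct and follows essentially the same route as the paper: both express $\mathbb{E}_n\tau_i$ over the weights $1/(p_k\pi(k))$ via the reversibility identity $\pi(k)p_k=\pi(k+1)q_{k+1}$ and then compare term by term with $\mathbb{E}_i\tau_j$ using $\pi([0,k])/\pi([k+1,n])\le 1/\pi([j,n])-1$ for $k<j$. Your uniform bounds on the numerators are a cosmetically different packaging of the same estimate.
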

\begin{proof}
By Lemma \ref{l-passage}, one has
\[
 \mathbb{E}_i\tau_j=\sum_{k=i}^{j-1}\frac{\pi([0,k])}{p_k\pi(k)},\quad\mathbb{E}_n(\tau_i)=\sum_{k=i}^{n-1}\frac{\pi([k+1,n])}{q_{k+1}\pi(k+1)}=\sum_{k=i}^{n-1}\frac{\pi([k+1,n])}{p_{k}\pi(k)}.
\]
The inequality is then given by the fact $\pi([0,k])/\pi([k+1,n])=1/\pi([k+1,n])-1\le 1/\pi([j,n])-1$ for $k<j$.
\end{proof}

The following proposition is the main technique used to prove Theorem \ref{t-upper}.
\begin{prop}\label{p-upper}
Referring to the setting in \textnormal{(\ref{eq-passage})}, it holds true that, for $j<k$,
\[
 d_{\textnormal{\tiny TV}}^{(c)}(i,t)\le\mathbb{P}_i(\max\{\widetilde{\tau}_j,\widetilde{\tau}_k\}>t)+1-\pi([j,k]),
\]
and
\[
 d_{\textnormal{\tiny TV}}^{(1/2)}(i,t)\le\mathbb{P}_i(\max\{\tau_j^{(1/2)},\tau_k^{(1/2)}\}>t)+1-\pi([j,k]),
\]
In particular,
\[
 d_{\textnormal{\tiny TV}}^{(c)}(t)\le\frac{\mathbb{E}_0\widetilde{\tau}_k+\mathbb{E}_n\widetilde{\tau}_j}{t}+1-\pi([j,k])
\]
and
\[
 d_{\textnormal{\tiny TV}}^{(1/2)}(t)\le\frac{2(\mathbb{E}_0\tau^{(1/2)}_k+\mathbb{E}_n\tau^{(1/2)}_j)}{t}+1-\pi([j,k]).
\]
\end{prop}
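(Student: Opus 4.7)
The plan is to establish the four inequalities in two stages: first, the pointwise bounds on $d_{\textnormal{\tiny TV}}^{(c)}(i,t)$ and $d_{\textnormal{\tiny TV}}^{(1/2)}(i,t)$, and then the global bounds on $d_{\textnormal{\tiny TV}}^{(c)}(t)$ and $d_{\textnormal{\tiny TV}}^{(1/2)}(t)$. The pointwise inequalities will be proved by a coupling argument that exploits the nearest-neighbor structure of birth-and-death chains, while the global inequalities will follow from the pointwise ones by Markov's inequality combined with control of the supremum over the starting state.

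For the pointwise bounds, the plan is to couple the chain $X$ starting at $i$ with a stationary chain $Y$ having $Y_0\sim\pi$. The coupling is the standard monotone coupling: in continuous time I would run $X$ and $Y$ under independent Poisson clocks and have them follow a common trajectory once they meet; in the $1/2$-lazy case I would use the analogous construction in which at most one of the two chains updates at each step (so the difference $X-Y$ changes by $\pm 1$ between moves, preserving order before coalescence). Let $T=\inf\{s:X_s=Y_s\}$. The key decomposition is
\[
d_{\textnormal{\tiny TV}}^{(c)}(i,t)\le \mathbb{P}(X_t\ne Y_t)\le \mathbb{P}(X_t\ne Y_t,\,Y_0\in[j,k])+(1-\pi([j,k])),
\]
so it suffices to show that $T\le \tau^*:=\max(\widetilde{\tau}_j,\widetilde{\tau}_k)$ on $\{Y_0\in[j,k]\}$. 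This is where the birth-and-death structure enters: by time $\tau^*$ the chain $X$ has, by nearest-neighbor motion, taken every value in $[j,k]$ at least once, and the monotone coupling together with an intermediate-value argument forces $X$ and $Y$ to coincide at some time no later than $\tau^*$ whenever $Y_0$ is also in $[j,k]$. The same argument, applied to the $1/2$-lazy coupling where a simultaneous move of both chains has probability zero, yields the second pointwise inequality.

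For the last two (global) inequalities, the plan is to apply Markov's inequality to the pointwise estimate, giving $\mathbb{P}_i(\tau^*>t)\le \mathbb{E}_i[\tau^*]/t$, and then to bound the supremum over $i$ of $\mathbb{E}_i[\tau^*]$. Using $\mathbb{E}_i[\max(\widetilde{\tau}_j,\widetilde{\tau}_k)]\le\mathbb{E}_i\widetilde{\tau}_j+\mathbb{E}_i\widetilde{\tau}_k$, together with the monotonicity estimates $\mathbb{E}_i\widetilde{\tau}_j\le\mathbb{E}_n\widetilde{\tau}_j$ for $i\ge j$ and $\mathbb{E}_i\widetilde{\tau}_k\le\mathbb{E}_0\widetilde{\tau}_k$ for $i\le k$, bounds the interior range $i\in(j,k)$ by $\mathbb{E}_0\widetilde{\tau}_k+\mathbb{E}_n\widetilde{\tau}_j$; the degenerate ranges $i\le j$ and $i\ge k$ reduce $\tau^*$ to a single hitting time and are easier. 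The $1/2$-lazy version goes through identically but applied crudely introduces the factor of $2$ in the statement. The main obstacle is the coalescence claim $T\le \tau^*$ under $\{Y_0\in[j,k]\}$: although the geometric picture is clear, the rigorous verification requires care because $Y$ is free to leave $[j,k]$ during $[0,\tau^*]$, so the proof must combine the $\pm 1$ monotone coupling with a trajectory analysis that uses the containment $Y_0\in[j,k]$ and the fact that $X$ sweeps the whole interval $[j,k]$ during $[0,\tau^*]$.
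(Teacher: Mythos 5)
Your overall architecture (no-crossing coupling with an independent stationary copy $Y$, followed by Markov's inequality and hitting-time monotonicity for the ``in particular'' bounds) is the same as the paper's, and your second stage is correct. However, the key coalescence claim in your first stage is false as stated: it is not true that $Y_0\in[j,k]$ forces $T\le\tau^*:=\max\{\widetilde{\tau}_j,\widetilde{\tau}_k\}$. Take $j=0$, $k=1$, $X_0=i=0$, $Y_0=1$. With positive probability $Y$ jumps from $1$ to $2$ before $X$ jumps from $0$ to $1$; on that event $X_s<Y_s$ for all $s\le\widetilde{\tau}_1=\tau^*$, so the chains have not met by time $\tau^*$ even though $Y_0\in[j,k]$ and $X$ has swept all of $[j,k]$. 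The difficulty you flag at the end --- that $Y$ is free to leave $[j,k]$ before $X$ reaches it --- is fatal, not merely technical: no trajectory analysis starting from the event $\{Y_0\in[j,k]\}$ can establish the containment you need, because it does not hold.

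The repair (and the paper's actual argument) is to look at $Y$ not at time $0$ but at the random times $\widetilde{\tau}_j$ and $\widetilde{\tau}_k$. Since $Y$ is stationary and independent of $X$, one has $Y_{\widetilde{\tau}_j}\sim\pi$ and $Y_{\widetilde{\tau}_k}\sim\pi$, hence
\[
\mathbb{P}_i\bigl(Y_{\widetilde{\tau}_j}\ge j,\ Y_{\widetilde{\tau}_k}\le k\bigr)\ \ge\ \pi([j,n])+\pi([0,k])-1\ =\ \pi([j,k]).
\]
On this event $X_{\widetilde{\tau}_j}-Y_{\widetilde{\tau}_j}\le 0$ and $X_{\widetilde{\tau}_k}-Y_{\widetilde{\tau}_k}\ge 0$, so the difference process changes sign between the two (ordered) times; since it moves by $\pm1$ and the chains almost surely never jump simultaneously, the no-crossing property forces $T\le\tau^*$. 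Therefore $\mathbb{P}_i(T>t)\le\mathbb{P}_i(\tau^*>t)+\mathbb{P}_i(T>\tau^*)\le\mathbb{P}_i(\tau^*>t)+1-\pi([j,k])$. The point is that one needs stationarity of $Y$ at two different random times combined with an inclusion--exclusion bound, rather than the single time-zero event of probability $\pi([j,k])$ that you chose. Your derivation of the last two inequalities (Markov's inequality together with $\mathbb{E}_i\widetilde{\tau}_j\le\mathbb{E}_n\widetilde{\tau}_j$ for $i\ge j$ and $\mathbb{E}_i\widetilde{\tau}_k\le\mathbb{E}_0\widetilde{\tau}_k$ for $i\le k$, with a union bound in the interior range) matches the paper and is fine once the pointwise bound is fixed.
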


In the above proposition, the discrete time case is discussed in Lemma 2.3 in \cite{DLP10}. Our method to prove this proposition is to construct a no-crossing coupling. We give the proof of the continuous time case for completeness and refer to \cite{DLP10} for the discrete time case, where a heuristic idea on the construction of no-crossing coupling is proposed.

\begin{proof}[Proof of Proposition \ref{p-upper}]
Let $(Y_t)_{t\ge 0}$ be another process corresponding to $H_t$ with $Y_0\overset{d}{=}\pi$. Set $T:=\inf\{t\ge 0|X_t=Y_t\}$ and $Z_t:=Y_t\mathbf{1}_{\{t\le T\}}+X_t\mathbf{1}_{\{t>T\}}$. Clearly, $(X_t,Z_t)_{t\ge 0}$ is a coupling for the semigroup $H_t$ and must be no-crossing according to the continuous time setting. Note that $T=\inf\{t\ge 0|X_t=Z_t\}$ is the coupling time of $X_t$ and $Z_t$. The classical coupling statement implies that
\begin{equation}\label{eq-cts-tv}
 d_{\textnormal{\tiny TV}}^{(c)}(i,t)\le\mathbb{P}_i(T>t).
\end{equation}
See e.g. \cite{A83} for a reference. Note that $X_{\tau_j}=j$, $X_{\tau_k}=k$ and
\[
 \mathbb{P}_i(X_{\widetilde{\tau}_j}\le Y_{\widetilde{\tau}_j})=\pi([j,n]),\quad\mathbb{P}_i(X_{\widetilde{\tau}_k}\ge Y_{\widetilde{\tau}_k})=\pi([0,k]).
\]
As $X_t,Y_t$ can not cross each other without coalescing in advance, this implies
\begin{align}
 \mathbb{P}_i(T\le\max\{\widetilde{\tau}_j,\widetilde{\tau}_k\})
 &\ge\mathbb{P}_i(\min\{\widetilde{\tau}_j,\widetilde{\tau}_k\}\le T\le\max\{\widetilde{\tau}_j,\widetilde{\tau}_k\})\notag\\
 &\ge\mathbb{P}_i(X_{\widetilde{\tau}_j}\le Y_{\widetilde{\tau}_j},X_{\widetilde{\tau}_k}\ge Y_{\widetilde{\tau}_k})\ge\pi([j,k]).\notag
\end{align}
Putting this back to (\ref{eq-cts-tv}) gives the desired result.

For the last part, note that if $i\le j$, then $\widetilde{\tau}_j<\widetilde{\tau}_k$ and, by Markov's inequality, this implies
\[
 \mathbb{P}_i(\max\{\widetilde{\tau}_j,\widetilde{\tau}_k\}>t)\le\mathbb{P}_0(\widetilde{\tau}_k>t)
 \le\mathbb{E}_0\widetilde{\tau}_k/t.
\]
Similarly, for $i\ge k$, one can show that
\[
 \mathbb{P}_i(\max\{\widetilde{\tau}_j,\widetilde{\tau}_k\}>t)\le\mathbb{P}_n(\widetilde{\tau}_j>t)
 \le\mathbb{E}_n\widetilde{\tau}_j/t.
\]
For $j<i<k$, we have
\[
 \mathbb{P}_i(\max\{\widetilde{\tau}_j,\widetilde{\tau}_k\}>t)\le\mathbb{P}_i(\widetilde{\tau}_j>t)
 +\mathbb{P}_i(\widetilde{\tau}_k>t)\le\frac{\mathbb{E}_n\widetilde{\tau}_j+\mathbb{E}_0\widetilde{\tau}_k}{t}.
\]
\end{proof}

\begin{proof}[Proof of Theorem \ref{t-upper}]
Set $j_{\epsilon}=\min\{i\ge 0|\pi([0,i])\ge \epsilon/3\}$ and $k_\epsilon=\min\{i\ge 0|\pi([0,i])\ge 1-\epsilon/3\}$. By Proposition \ref{p-upper} and Lemma \ref{l-passage}, the choice of $j=j_\epsilon$ and $k=k_\epsilon$ implies that
\[
 T_{\text{\tiny TV}}^{(c)}(\epsilon)\le\frac{3(\mathbb{E}_0\tau_{k_\epsilon}+\mathbb{E}_n\tau_{j_\epsilon})}{\epsilon}.
\]
By Corollary \ref{c-tv}, one has
\[
 \mathbb{E}_0\tau_{k_\epsilon}=\mathbb{E}_0\tau_{i_0}+\mathbb{E}_{i_0}\tau_{k_\epsilon}\le\mathbb{E}_0\tau_{i_0}+\left(\frac{3}{\epsilon}-1\right)\mathbb{E}_n\tau_{i_0}
\]
and
\[
 \mathbb{E}_n\tau_{j_\epsilon}=\mathbb{E}_n\tau_{i_0}+\mathbb{E}_{i_0}\tau_{j_\epsilon}\le\mathbb{E}_n\tau_{i_0}+\left(\frac{3}{\epsilon}-1\right)\mathbb{E}_0\tau_{i_0}.
\]
Adding up both terms gives the upper bound in continuous time case. The proof for the $(1/2)$-lazy discrete time case is similar and, by Proposition \ref{p-upper}, we obtain $T_{\text{\tiny TV}}^{(1/2)}(\epsilon)\le 18(\mathbb{E}_0\tau_{i_0}+\mathbb{E}_n\tau_{i_0})/\epsilon^2$. For $\delta\in(1/2,1)$, note that $K_\delta=(K_{2\delta-1})_{1/2}$. Since the birth and death rates of $K_{2\delta-1}$ are $2(1-\delta)p_i$ and $2(1-\delta)q_i$, the above result and Lemma \ref{l-passage} lead to $T_{\text{\tiny TV}}^{(\delta)}(\epsilon)\le 9(\mathbb{E}_0\tau_{i_0}+\mathbb{E}_n\tau_{i_0})/((1-\delta)\epsilon^2)$.
\end{proof}

\subsection{A lower bound of the mixing time}
The goal of this subsection is to establish a lower bound on the total variation mixing time for birth and death chains. Recall the notations in the previous subsection. Let $(X_m)_{m=0}^\infty$ be an irreducible birth and death chain with transition matrix $K$ and stationary distribution $\pi$. Let $N_t$ be a Poisson process of parameter $1$ that is independent of $X_m$. For $0\le i\le n$, let $\tau_i=\min\{m\ge 0|X_m=i\}$ and $\widetilde{\tau}_i=\inf\{t\ge 0|X_{N_t}=i\}$. Then, the total variation mixing time satisfies
\begin{equation}\label{eq-lowerdisc}
 d_{\text{\tiny TV}}(0,t)\ge K^t(0,[0,i-1])-\pi([0,i-1])\ge\mathbb{P}_0(\tau_i>t)-\pi([0,i-1])
\end{equation}
and
\begin{equation}\label{eq-lowercts}
 d_{\text{\tiny TV}}^{(c)}(0,t)\ge H_t(0,[0,i-1])-\pi([0,i-1])\ge\mathbb{P}_0(\widetilde{\tau}_i>t)-\pi([0,i-1]).
\end{equation}
Brown and Shao discuss the distribution of $\widetilde{\tau}_i$ in \cite{BS87}, of which proof also works for the discrete time case. In detail, if $-1<\beta_1<\cdots<\beta_i<1$ are the eigenvalues of the submatrix of $K$ indexed by $\{0,...,i-1\}$ and $\lambda_j=1-\beta_j$, then
\begin{equation}\label{eq-disc}
 \mathbb{P}_0(\tau_i>t)=\sum_{j=1}^i\left(\prod_{k\ne j}\frac{\lambda_k}{\lambda_k-\lambda_j}\right)(1-\lambda_j)^t
\end{equation}
and
\begin{equation}\label{eq-cts}
 \mathbb{P}_0(\widetilde{\tau}_i>t)=\sum_{j=1}^i\left(\prod_{k\ne j}\frac{\lambda_k}{\lambda_k-\lambda_j}\right)e^{-t\lambda_j}.
\end{equation}
Note that, under $\mathbb{P}_0$, $\widetilde{\tau}_i$ is the sum of independent exponential random variables with parameters $\lambda_1,...,\lambda_i$. If $\beta_1>0$, then $\tau$ is the sum of independent geometric random variables with parameters $\lambda_1,...,\lambda_i$. In discrete time case, the requirement $\beta_1>0$ holds automatically for the $\delta$-lazy chain with $\delta\ge 1/2$. The above formula leads to the following theorem.

\begin{thm}[Lower bound]\label{t-lower}
Let $K$ be the transition matrix of an irreducible birth and death chain on $\{0,1,...,n\}$. Let $\tau_i=\tau_i^{(0)}$ be the first passage time to $i$ defined in \textnormal{(\ref{eq-passage})}. For $\delta\in[1/2,1)$,
\[
 \min\{T_{\textnormal{\tiny TV}}^{(c)}(1/10),2(1-\delta)T_{\textnormal{\tiny TV}}^{(\delta)}(1/20)\}\ge \frac{\max\{\mathbb{E}_0\tau_{i_0},\mathbb{E}_n\tau_{i_0}\}}{6},
\]
where $i_0\in\{0,...,n\}$ satisfies $\pi([0,i_0-1])\le 1/2$ and $\pi([i_0+1,n])\le 1/2$.
\end{thm}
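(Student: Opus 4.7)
My plan is to combine the standard first-passage-time lower bound on $d_{\textnormal{\tiny TV}}$ with a sharp Chernoff tail estimate exploiting the sum-of-exponentials (resp.\ sum-of-geometrics) representation of $\widetilde\tau_{i_0}$ (resp.\ $\tau^{(1/2)}_{i_0}$) given by (\ref{eq-cts}) and (\ref{eq-disc}). First, apply (\ref{eq-lowercts}) with $i=i_0$, together with its mirror image starting from $n$ and using the subset $[i_0+1,n]$; the hypotheses $\pi([0,i_0-1])\le 1/2$ and $\pi([i_0+1,n])\le 1/2$ give
\[
 d_{\textnormal{\tiny TV}}^{(c)}(t)\ge\max\{\mathbb{P}_0(\widetilde\tau_{i_0}>t),\,\mathbb{P}_n(\widetilde\tau_{i_0}>t)\}-\tfrac{1}{2},
\]
and analogously for $d_{\textnormal{\tiny TV}}^{(1/2)}$. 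The quantity $\widetilde\tau_{i_0}$ under $\mathbb{P}_n$ is also a sum of independent exponentials, since the reflection $j\mapsto n-j$ conjugates $K$ into a birth and death chain to which (\ref{eq-cts}) applies on the submatrix indexed by $\{i_0+1,\dots,n\}$.

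For the continuous bound, pick the starting point $x\in\{0,n\}$ maximizing $\mu:=\mathbb{E}_x\widetilde\tau_{i_0}=\sum_j 1/\lambda_j$, and set $q_j=1/(\lambda_j\mu)\in[0,1]$, so that $\sum_j q_j=1$. At $u=5/\mu$ Chernoff's inequality gives
\[
 \mathbb{P}_x(\widetilde\tau_{i_0}\le\mu/6)\le e^{5/6}\prod_{j}\frac{1}{1+5q_j}.
\]
The crucial step is the product inequality $\prod_j(1+5q_j)\ge 6$, which follows from the concavity of $q\mapsto\log(1+5q)$ on $[0,1]$: the chord bound gives $\log(1+5q_j)\ge q_j\log 6$, and summing over $j$ yields $\sum_j\log(1+5q_j)\ge\log 6$. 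Consequently $\mathbb{P}_x(\widetilde\tau_{i_0}>\mu/6)\ge 1-e^{5/6}/6>3/5$, so $d_{\textnormal{\tiny TV}}^{(c)}(\mu/6)>1/10$; Lemma \ref{l-passage} then identifies $\mu$ with $\max\{\mathbb{E}_0\tau_{i_0},\mathbb{E}_n\tau_{i_0}\}$.

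For the $\delta$-lazy case, the identity $K_\delta=(K_{2\delta-1})_{1/2}$ reduces everything to $\delta=1/2$; the factor $2(1-\delta)$ in the statement is exactly the scaling of the birth and death rates of $K_{2\delta-1}$ relative to $K$, transported via Lemma \ref{l-passage}. For $\delta=1/2$, $\tau^{(1/2)}_{i_0}$ is a sum of independent geometrics (the submatrix of $K_{1/2}$ is diagonally dominated by $1/2$, so all its eigenvalues are positive), with $\mathbb{E}[e^{-uG_j}]=\lambda_j/(\lambda_j+e^u-1)$. Choosing $u=\log(1+11/M)$ with $M=\mathbb{E}_x\tau^{(1/2)}_{i_0}=2\mu$ and combining the analogous inequality $\prod_j(1+11q_j)\ge 12$ with $(1+11/M)^{M/12}\le e^{11/12}$ produces $\mathbb{P}(\tau^{(1/2)}_{i_0}\le M/12)\le e^{11/12}/12$, hence $\mathbb{P}(\tau^{(1/2)}_{i_0}>\mu/6)>11/20$ and $d_{\textnormal{\tiny TV}}^{(1/2)}(\mu/6)>1/20$, as required.

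The main technical obstacle is the Chernoff step itself, specifically the product inequality $\prod_j(1+aq_j)\ge 1+a$ on the simplex $\{q\ge 0,\sum q_j=1\}$ (used with $a=5$ and $a=11$). This log-concavity trick yields constants sharp enough to clear the thresholds $3/5$ and $11/20$ respectively; weaker second-moment bounds such as Paley--Zygmund or Cantelli fall just short given only the hypothesis $\pi([0,i_0-1])\le 1/2$, so it is essential to exploit the exact exponential and geometric MGFs. Once this tail estimate is in place, the reflection symmetry handling $\mathbb{P}_n$, the reduction $\delta\mapsto 1/2$, and the identification of passage-time means via Lemma \ref{l-passage} are routine bookkeeping.
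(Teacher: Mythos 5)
Your proof is correct, and while it shares the paper's overall skeleton (the first-passage lower bound $d_{\textnormal{\tiny TV}}\ge\mathbb{P}(\tau_{i_0}>t)-1/2$ from both endpoints, the Brown--Shao representation of $\widetilde\tau_{i_0}$ and $\tau^{(1/2)}_{i_0}$ as sums of independent exponentials/geometrics, and the reductions via reflection and $K_\delta=(K_{2\delta-1})_{1/2}$), the key tail estimate is obtained by a genuinely different device. The paper splits into two cases according to whether some $1/\lambda_j$ exceeds $a\,\mathbb{E}_0\widetilde\tau_{i_0}$: if so it uses the explicit tail of that single exponential, and if not it observes $\mathrm{Var}_0(\widetilde\tau_{i_0})\le a(\mathbb{E}_0\widetilde\tau_{i_0})^2$ and applies the one-sided Chebyshev (Cantelli) inequality, optimizing over $a,b$ to get $\min\{e^{-b/a},(1-b)^2/(a+(1-b)^2)\}-1/2$ with $a=1/3$, $b=1/6$. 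You instead bound the lower tail by a single Chernoff/MGF computation, with the product inequality $\prod_j(1+aq_j)\ge 1+a$ on the simplex (via concavity of $\log(1+aq)$) doing the work that the paper's case split does; your constants ($1-e^{5/6}/6>3/5$ and $1-e^{11/12}/12>11/20$) clear the thresholds with a little more room than the paper's. Your parenthetical claim that Cantelli ``falls just short'' is not quite fair to the paper's argument --- it suffices there precisely because of the case split, which guarantees a small variance-to-squared-mean ratio in the regime where it is invoked --- but this does not affect your proof. Two small bookkeeping points you should make explicit: the geometric summands for $K_{1/2}$ have success probabilities $\lambda_j/2$ (so the MGF is $(\lambda_j/2)/(\lambda_j/2+e^u-1)$ and $q_j$ should be normalized by $M=\sum_j 2/\lambda_j$), and in the discrete case one passes from $d^{(1/2)}(\lfloor M/12\rfloor)>1/20$ to $T^{(1/2)}_{\textnormal{\tiny TV}}(1/20)\ge\lfloor M/12\rfloor+1\ge M/12$; both are routine.
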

\begin{proof}[Proof of Theorem \ref{t-lower}]
First, we consider the continuous time case. Let $\lambda_1,...,\lambda_i$ be eigenvalues of the submatrix of $I-K$ indexed by $0,...,i-1$ and $\widetilde{\tau}_{i,1},...,\widetilde{\tau}_{i,i}$ be independent exponential random variables with parameters $\lambda_1,...,\lambda_i$. By (\ref{eq-cts}), $\widetilde{\tau}_i$ and $\widetilde{\tau}_{i,1}+\cdots+\widetilde{\tau}_{i,i}$ are identically distributed under $\mathbb{P}_0$ and, by (\ref{eq-lowercts}), this implies
\[
 d_{\text{\tiny TV}}^{(c)}(0,t)\ge \mathbb{P}(\widetilde{\tau}_{i,1}+\cdots+\widetilde{\tau}_{i,i}>t)-\pi([0,i-1]).
\]
It is easy to see that
\[
 \mathbb{E}_0\widetilde{\tau}_i=\frac{1}{\lambda_1}+\cdots+\frac{1}{\lambda_i},\quad\text{Var}_0(\widetilde{\tau}_i)=\frac{1}{\lambda_1^2}+\cdots+\frac{1}{\lambda_i^2}.
\]
Let $a\in(0,1)$ and consider the following two cases. If $1/\lambda_j>a\mathbb{E}_0\widetilde{\tau}_i$ for some $1\le j\le i$, then
\[
 \mathbb{P}_0(\widetilde{\tau}_i>t)\ge\mathbb{P}(\widetilde{\tau}_{i,j}>t)>e^{-t/(a\mathbb{E}_0\widetilde{\tau}_i)}.
\]
If $1/\lambda_j\le a\mathbb{E}_0\widetilde{\tau}_i$ for all $1\le j\le i$, then $\text{Var}_0(\widetilde{\tau}_i)\le a(\mathbb{E}_0\widetilde{\tau}_i)^2$ and, by the one-sided Chebyshev inequality, we have
\[
 \mathbb{P}_0(\widetilde{\tau}_i>t)\ge \frac{(t-\mathbb{E}_0\widetilde{\tau}_i)^2}{\text{Var}_0(\widetilde{\tau}_i)+(t-\mathbb{E}_0\widetilde{\tau}_i)^2}\ge\frac{(t-\mathbb{E}_0\widetilde{\tau}_i)^2}{a(\mathbb{E}_0\widetilde{\tau}_i)^2+(t-\mathbb{E}_0\widetilde{\tau}_i)^2}=\frac{(1-b)^2}{a+(1-b)^2},
\]
for $t=b\mathbb{E}_0\widetilde{\tau}_i$ with $b\in(0,1)$. Combining both cases and setting $i=i_0$ in (\ref{eq-lowercts}) yields that, for $a,b\in(0,1)$,
\begin{equation}\label{eq-a/b}
 d_{\text{\tiny TV}}^{(c)}(0,b\mathbb{E}_0\widetilde{\tau}_{i_0})\ge\min\left\{e^{-b/a},\frac{(1-b)^2}{a+(1-b)^2}\right\}-\frac{1}{2}.
\end{equation}
Putting $a=1/3$ and $b=1/6$ gives $T_{\text{\tiny TV}}^{(c)}(0,1/10)\ge\mathbb{E}_0\widetilde{\tau}_{i_0}/6$.

For the discrete time case, note that the eigenvalues of the submatrix of $I-K_{1/2}=\frac{1}{2}(I-K)$ indexed by $0,...,i-1$ are $\lambda_1/2,...,\lambda_i/2$. Let $\tau_{i,1},...,\tau_{i,i}$ be independent geometric random variables with success probabilities $\lambda_1/2,...,\lambda_i/2$. Replacing $K$ with $K_{1/2}$ in (\ref{eq-lowerdisc}), we obtain
\[
 d_{\text{\tiny TV}}^{(1/2)}(0,t)\ge\mathbb{P}_0(\tau_{i,1}+\cdots+\tau_{i,i}>t)-\pi([0,i-1]).
\]
Note that, under $\mathbb{P}_0$, $\tau_i^{(1/2)}$ has the same distribution as $\tau_{i,1}+\cdots+\tau_{i,i}$ and this implies
\[
 \mathbb{E}_0\tau_i^{(1/2)}=\frac{2}{\lambda_1}+\cdots+\frac{2}{\lambda_i},\quad\text{Var}_0(\tau_i^{(1/2)})=\sum_{j=1}^i\frac{4(1-\lambda_j/2)}{\lambda_j^2}\le\sum_{j=1}^i\frac{4}{\lambda_j^2}.
\]
Using the same analysis as before, one may derive, for $1/\mathbb{E}_0\tau_i^{(1/2)}<a<1$ and $t<\mathbb{E}_0\tau_i^{(1/2)}$,
\[
 \mathbb{P}_0(\tau_i^{(1/2)}>t)\ge\min\left\{\left(1-\frac{1}{a\mathbb{E}_0\tau_i^{(1/2)}}\right)^t,\frac{\left(t-\mathbb{E}_0\tau_i^{(1/2)}\right)^2}{a\left(\mathbb{E}_0\tau_i^{(1/2)}\right)^2+\left(t-\mathbb{E}_0\tau_i^{(1/2)}\right)^2}\right\}.
\]
By Lemma \ref{l-passage}, $\mathbb{E}_0\tau_i^{(1/2)}\ge 2i$. Obviously, if $i_0=0$, then $T_{\text{\tiny TV}}^{(1/2)}(0,1/20)\ge 0=\mathbb{E}_0\tau_{i_0}^{(1/2)}$. For $i_0\ge 1$, $\mathbb{E}_0\tau_{i_0}^{(1/2)}\ge 2$ and the setting, $a=2/3$ and $t=\left\lfloor\mathbb{E}_0\tau_{i_0}^{(1/2)}/12\right\rfloor$, implies
\[
 d_{\text{\tiny TV}}^{(1/2)}\left(0,\left\lfloor\mathbb{E}_0\tau_{i_0}^{(1/2)}/12\right\rfloor\right)\ge\min\left\{2^{-1/3},\frac{(11/12)^2}{2/3+(11/12)^2}\right\}-\frac{1}{2}>\frac{1}{20},
\]
where the first inequality use the fact that $s\log(1-3/(2s))$ is increasing on $[2,\infty)$. Hence, we have $T_{\text{\tiny TV}}^{(1/2)}(0,1/20)\ge\mathbb{E}_0\tau_{i_0}^{(1/2)}/12=\mathbb{E}_0\tau_{i_0}/6$. For $\delta>1/2$, the combination of the above result and the observation $K_\delta=(K_{2\delta-1})_{1/2}$ implies that $T_{\text{\tiny TV}}^{(\delta)}(0,1/20)\ge\mathbb{E}_0\tau_{i_0}/(12(1-\delta))$.

The analysis from the other end point gives the other lower bound. This finishes the proof.
\end{proof}

\subsection{Relaxation of the median condition}

In some cases, it is not easy to determine the value of $i_n$ in Theorem \ref{t-bdc-cut-main}. Let $t_n$ be the constants in Theorem \ref{t-upper}.
For $c\in(0,1)$, let $i_n(c)\in\{0,...,n\}$ be the state such that $\pi_n([0,i_n(c)-1])\le c$, $\pi_n([i_n(c)+1,n])\le 1-c$ and let $t_n(c)$ be the following constant
\[
 t_n(c)=\sum_{k=0}^{i_n(c)-1}\frac{\pi_n([0,k])}{\pi_n(k)p_{n,k}}+\sum_{k=i_n(c)+1}^n\frac{\pi_n([k,n])}{\pi_n(k)q_{n,k}}.
\]
Assume that $c\ge 1/2$. In this case, if $i_n$ is the smallest median, then $i_n\le i_n(c)$ and
\[
 \sum_{k=i_n}^{i_n(c)-1}\frac{\pi([0,k])}{\pi_n(k)p_{n,k}}=\sum_{k=i_n+1}^{i_n(c)}\frac{\pi_n([0,k-1])}{\pi_n(k)q_{n,k}}.
\]
Note that, for $i_n<k\le i_n(c)$,
\[
 \frac{1}{2}\le\pi_n([0,i_n])\le\frac{\pi_n([0,k-1])}{\pi_n([k,n])}\le\frac{1}{\pi_n([i_n(c),n])}\le\frac{1}{1-c}.
\]
This implies $t_n/2\le t_n(c)\le t_n/(1-c)$. Similarly, for $c\le 1/2$, one can show that $t_n/2\le t_n(c)\le t_n/c$. Combining both cases gives
\begin{equation}\label{eq-compt_n}
 t_n/2\le t_n(c)\le t_n/\min\{c,1-c\}.
\end{equation}
As a consequence of the above discussion, we obtain the following theorem.

\begin{thm}\label{t-tvmixingcn}
Referring to Theorem \ref{t-bdc-cut-main}. For $n\ge 1$, let $j_n\in\{0,1,...,n\}$ and set
\[
 t_n'=\max\left\{\sum_{k=0}^{j_n-1}\frac{\pi_n([0,k])}{\pi_n(k)p_{n,k}},\sum_{k=j_n+1}^n\frac{\pi_n([k,n])}{\pi_n(k)q_{n,k}}\right\}.
\]
Suppose that
\[
 0<\liminf_{n\ra\infty}\pi_n([0,j_n])\le\limsup_{n\ra\infty}\pi_n([0,j_n])<1.
\]
Then, Theorem \ref{t-bdc-cut-main} remains true if $t_n$ is replaced by $t_n'$.
\end{thm}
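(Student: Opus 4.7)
The plan is to establish the comparison $t_n \asymp t_n'$ and then apply Theorem \ref{t-bdc-cut-main} essentially verbatim. Concretely, set $c_n = \pi_n([0, j_n])$. The hypothesis provides some $\alpha \in (0, 1/2)$ with $c_n \in [\alpha, 1-\alpha]$ for all sufficiently large $n$. With this choice $j_n$ satisfies the defining conditions $\pi_n([0, j_n - 1]) \le c_n$ and $\pi_n([j_n + 1, n]) \le 1 - c_n$, so $j_n$ is a valid instance of $i_n(c_n)$ in the sense of the discussion preceding \eqref{eq-compt_n}. Consequently $t_n'$ coincides with the quantity $t_n(c_n)$ of that discussion up to the cosmetic $\max$-versus-sum discrepancy, which is a factor of at most two.

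Next, I would invoke \eqref{eq-compt_n}, which yields
\[
 \frac{t_n}{2} \le t_n(c_n) \le \frac{t_n}{\min\{c_n, 1-c_n\}} \le \frac{t_n}{\alpha}
\]
for all $n$ large enough. Combined with the trivial two-sided $\max$--sum comparison on both $t_n$ and $t_n'$, this gives $t_n \asymp t_n'$ with a ratio depending only on $\alpha$, that is, only on the family.

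Finally, substituting $t_n \asymp t_n'$ into Theorem \ref{t-bdc-cut-main}: the mixing-time sandwich transfers to $t_n'$ after multiplying the constant $C$ by the comparison ratio, and the criterion $t_n \ell_n \to \infty$ is equivalent to $t_n' \ell_n \to \infty$, so the equivalence of (1), (2), (3) is preserved. The main (and essentially only) obstacle is the bookkeeping step of verifying that $j_n$ may serve as $i_n(c_n)$ uniformly in $n$; the hypothesis that $\pi_n([0, j_n])$ is bounded away from $0$ and $1$ is precisely what prevents $c_n$ from degenerating and is therefore exactly what is needed to apply \eqref{eq-compt_n}.
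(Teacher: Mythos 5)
Your proposal is correct and follows exactly the paper's own route: the paper's proof is precisely the observation that the conclusion "comes immediately from (\ref{eq-compt_n}) with $c=\pi_n([0,j_n])$," and your verification that $j_n$ qualifies as $i_n(c_n)$ and that $\min\{c_n,1-c_n\}$ is bounded below by some $\alpha>0$ is the correct (if more explicit) unpacking of that one-line argument.
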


\begin{proof}
The proof comes immediately from (\ref{eq-compt_n}) with $c=\pi_n([0,j_n])$.
\end{proof}

We use this observation to bound the cutoff time in the following theorem.

\begin{thm}\label{t-tvmixingcts}
Referring to Theorem \ref{t-bdc-cut-main}. Suppose that $\mathcal{F}_c$ has a total variation cutoff. Then, for any $\epsilon\in(0,1)$,
\[
 \frac{2\log 2}{5}\le\liminf_{n\ra\infty}\frac{T_{n,\textnormal{\tiny TV}}^{(c)}(\epsilon)}{t_n}\le\limsup_{n\ra\infty}\frac{T_{n,\textnormal{\tiny TV}}^{(c)}(\epsilon)}{t_n}\le 2
\]
\end{thm}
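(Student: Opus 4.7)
The plan is to leverage the cutoff hypothesis, which by Theorem \ref{t-csal12-2} gives $T_{n,\textnormal{\tiny TV}}^{(c)}(\epsilon)/T_{n,\textnormal{\tiny TV}}^{(c)}(\epsilon')\to 1$ for every $\epsilon,\epsilon'\in(0,1)$. So each of the two inequalities needs to be proved only at a single conveniently chosen threshold and can then be transferred to a general $\epsilon$ via this ratio, with the auxiliary threshold sent to an extreme of $(0,1)$ at the end.

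For the upper bound I would invoke the refined estimate recorded in Remark \ref{r-upper}: for every $\epsilon_0\in(0,1)$,
\[
 T_{n,\textnormal{\tiny TV}}^{(c)}(\epsilon_0)\le\frac{\sqrt{\epsilon_0}+\sqrt{1-\epsilon_0}}{\sqrt{\epsilon_0}}\bigl(\mathbb{E}_0\tau_{i_n}+\mathbb{E}_n\tau_{i_n}\bigr)\le 2\Bigl(1+\sqrt{\tfrac{1-\epsilon_0}{\epsilon_0}}\Bigr)t_n,
\]
where the second inequality is just $\mathbb{E}_0\tau_{i_n}+\mathbb{E}_n\tau_{i_n}\le 2t_n$, immediate from the definition of $t_n$. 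Dividing by $t_n$, multiplying through by the cutoff ratio $T_{n,\textnormal{\tiny TV}}^{(c)}(\epsilon)/T_{n,\textnormal{\tiny TV}}^{(c)}(\epsilon_0)\to 1$, and then sending $\epsilon_0\uparrow 1$ collapses the prefactor to $2$ and yields $\limsup_n T_{n,\textnormal{\tiny TV}}^{(c)}(\epsilon)/t_n\le 2$.

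For the lower bound I would return to the Chebyshev/exponential inequality (\ref{eq-a/b}) obtained inside the proof of Theorem \ref{t-lower}, specialized to $a=2/5$ and $b=\tfrac{2\log 2}{5}-\eta$ for small $\eta>0$. The pair $(2/5,2\log 2/5)$ sits exactly on the boundary $b=a\log 2$, where $e^{-b/a}=1/2$, while a direct check gives $(1-b)^2/(a+(1-b)^2)\approx 0.566>1/2$ there (so the binding constraint in (\ref{eq-a/b}) is $b<a\log 2$ rather than $(1-b)^2>a$, since $1-\sqrt{2/5}\approx 0.368>2\log 2/5$). Hence for small $\eta>0$ the right side of (\ref{eq-a/b}) exceeds an explicit constant $\kappa(\eta):=\tfrac12(e^{5\eta/2}-1)>0$, independent of $n$, giving $d_{\textnormal{\tiny TV}}^{(c)}(0,b\mathbb{E}_0\widetilde\tau_{i_n})\ge\kappa(\eta)$; the reversed-labels analogue of (\ref{eq-a/b}) handles the case $t_n=\mathbb{E}_n\widetilde\tau_{i_n}$ with the starting state $n$. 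In either case $d_{\textnormal{\tiny TV}}^{(c)}(bt_n)\ge\kappa(\eta)$, so $T_{n,\textnormal{\tiny TV}}^{(c)}(\kappa(\eta)/2)\ge bt_n$; the cutoff ratio promotes this to $\liminf_n T_{n,\textnormal{\tiny TV}}^{(c)}(\epsilon)/t_n\ge 2\log 2/5-\eta$, and $\eta\downarrow 0$ finishes. The delicate point is this calibration: the particular pair $(2/5,2\log 2/5)$ makes (\ref{eq-a/b}) vanish on the nose, so strict positivity only appears after $b$ is perturbed downward, and it is precisely the cutoff hypothesis that lets this vanishing surplus $\kappa(\eta)$ be absorbed in the asymptotics via the equivalence $T_{n,\textnormal{\tiny TV}}^{(c)}(\kappa(\eta)/2)\sim T_{n,\textnormal{\tiny TV}}^{(c)}(\epsilon)$.
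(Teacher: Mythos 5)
Your proposal is correct and follows essentially the same route as the paper's (very terse) proof: the upper bound comes from Remark \ref{r-upper} together with $\mathbb{E}_0\tau_{i_n}+\mathbb{E}_n\tau_{i_n}\le 2t_n$ and the cutoff ratio with the auxiliary threshold sent to $1$, and the lower bound comes from (\ref{eq-a/b}) with $a=2/5$ and $b\uparrow a\log 2$ (the paper parametrizes this as $b=a\log(2/(1+2\epsilon))$, $\epsilon\to0$, which matches your $b=2\log 2/5-\eta$). Your write-up merely makes explicit the calibration and the transfer via the cutoff that the paper leaves implicit.
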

\begin{proof}[Proof of Theorem \ref{t-tvmixingcts}]
The upper bound is given by Remark \ref{r-upper} and the fact, $\max\{s,t\}\ge (s+t)/2$, whereas the lower bound is obtained by applying $a=2/5$ and $b=a\log(2/(1+2\epsilon))$ in (\ref{eq-a/b}) with $\epsilon\ra 0$.
\end{proof}

\subsection{Bounding the spectral gap}
This subsection is devoted to poviding bounds on the specral gap for birth and death chains. As the graph associated with a birth and death chain is a path, weighted Hardy's inequality can be used to bound the spectral gap. We refer to the Appendix for a detailed discussion of the following results.
See Theorems \ref{t-Hardy}-\ref{t-sym1}.

\begin{thm}\label{t-gap}
Consider an irreducible birth and death chain on $\{0,...,n\}$ with birth, death and holding rates $p_i,q_i,r_i$ and stationary distribution $\pi$. Let $\lambda$ be the spectral gap and set, for $0\le i\le n$,
\[
 C(i)=\max\left\{\max_{j:j<i}\sum_{k=j}^{i-1}\frac{\pi([0,j])}{\pi(k)p_k},\max_{j:j>i}\sum_{k=i+1}^j\frac{\pi([j,n])}{\pi(k)q_k}\right\}.
\]
Then, for $0\le m\le n$,
\[
 \frac{1}{4C(m)}\le\lambda\le\frac{1}{\min\{\pi([0,m]),\pi([m,n])\}C(m)}.
\]
In particular, if $M$ is a median of $\pi$, that is, $\pi([0,M])\ge 1/2$ and $\pi([M,n])\ge 1/2$, then
\[
 \frac{1}{4C(M)}\le\lambda\le\frac{2}{C(M)}.
\]
\end{thm}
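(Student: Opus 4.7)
The plan is to derive Theorem \ref{t-gap} from the variational formula
\[
\lambda = \inf_{f \text{ non-constant}} \frac{\mathcal{E}(f,f)}{\mathrm{Var}_\pi(f)}, \qquad \mathcal{E}(f,f) := \sum_{k=0}^{n-1}\pi(k)p_k\bigl(f(k{+}1)-f(k)\bigr)^2,
\]
combined with a weighted discrete Hardy inequality applied separately on the two halves $\{0,\ldots,m\}$ and $\{m,\ldots,n\}$ of the path. Reversibility yields $\pi(k)p_k = \pi(k{+}1)q_{k+1}$, which allows me to rewrite the Dirichlet sum on the right half in death-rate form when convenient.

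For the lower bound $\lambda \ge 1/(4C(m))$, I would start with the elementary estimate $\mathrm{Var}_\pi(f) \le \sum_i \pi(i)(f(i) - f(m))^2$ (choose the centering constant $f(m)$), set $g := f - f(m)$ so that $g(m) = 0$, and split the sum at $m$. On the left half, the one-sided weighted discrete Hardy inequality with sharp factor $4$ (supplied by the Appendix, Theorem \ref{t-Hardy}) gives
\[
\sum_{i=0}^{m}\pi(i)\,g(i)^2 \;\le\; 4\,C_-(m)\sum_{k=0}^{m-1}\pi(k)p_k\bigl(g(k{+}1)-g(k)\bigr)^2,
\]
with $C_-(m) = \max_{j<m}\pi([0,j])\sum_{k=j}^{m-1}\frac{1}{\pi(k)p_k}$, and the symmetric inequality on the right produces the matching bound with constant $4C_+(m)$ after translating to death-rate form via reversibility. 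Summing the two estimates and using $C_\pm(m) \le C(m)$ yields $\mathrm{Var}_\pi(f) \le 4C(m)\,\mathcal{E}(f,f)$, hence the claimed lower bound.

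For the upper bound I would plug a Hardy-extremal test function into the Rayleigh quotient on whichever side realizes the maximum defining $C(m)$. Assuming $C(m) = C_-(m)$ and choosing $j^* < m$ at which that maximum is attained, set $S := \sum_{k=j^*}^{m-1} \frac{1}{\pi(k)p_k}$ so that $\pi([0,j^*])\,S = C(m)$, and define
\[
g(i) = \begin{cases} 0, & 0 \le i \le j^*, \\ \sum_{k=j^*}^{i-1}\frac{1}{\pi(k)p_k}, & j^* \le i \le m, \\ S, & m \le i \le n. \end{cases}
\]
Then $g(k{+}1) - g(k) = 1/(\pi(k)p_k)$ precisely for $j^* \le k < m$ and vanishes otherwise, so a direct computation gives $\mathcal{E}(g,g) = S$. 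Because $g \equiv 0$ on $[0,j^*]$ and $g \equiv S$ on $[m,n]$, restricting the symmetric representation $\mathrm{Var}_\pi(g) = \tfrac12\sum_{x,y}\pi(x)\pi(y)(g(x)-g(y))^2$ to pairs $(x,y) \in [0,j^*]\times[m,n]$ and their reversals forces $\mathrm{Var}_\pi(g) \ge \pi([0,j^*])\pi([m,n])S^2$. The Rayleigh quotient then bounds $\lambda \le 1/\bigl(\pi([m,n])\,C(m)\bigr)$, which is at most $1/\bigl(\min\{\pi([0,m]),\pi([m,n])\}\,C(m)\bigr)$. The mirror construction covers the case $C(m) = C_+(m)$, and specializing to a median $m = M$ gives $\min\{\pi([0,M]),\pi([M,n])\} \ge 1/2$, hence $\lambda \le 2/C(M)$.

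The principal technical input, and therefore the main obstacle, is the sharp weighted discrete Hardy inequality with constant $4$; once that ingredient is available (via the Appendix Theorems \ref{t-Hardy}--\ref{t-sym1}) the remaining work is organizational: centering $f$ at $m$, splitting the path at $m$, passing between birth-rate and death-rate forms via detailed balance, and exhibiting the piecewise-linear Hardy extremizer on whichever side dominates in $C(m)$.
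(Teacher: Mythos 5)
Your proof is correct, and its overall architecture (center $f$ at $m$, split the path at $m$, apply the weighted discrete Hardy inequality with constant $4$ on each half) is exactly the paper's route to the lower bound in Theorem \ref{t-Hardy}. One citation issue: the factor-$4$ Hardy inequality you need is Proposition \ref{p-Hardy}, not Theorem \ref{t-Hardy} itself --- Theorem \ref{t-gap} \emph{is} Theorem \ref{t-Hardy} specialized to $\nu(k,k+1)=\pi(k)p_k=\pi(k+1)q_{k+1}$, so invoking Theorem \ref{t-Hardy} for the key estimate would be circular; invoking Proposition \ref{p-Hardy} is what the paper does and is what you should do. Your upper bound, however, genuinely differs from the paper's. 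The paper takes the (abstract) extremizer $\phi$ of the Hardy constant $A$ on the dominant half, extends it by zero across $[0,m]$, and converts $\pi(\psi^2)$ into $\operatorname{Var}_\pi(\psi)$ via Cauchy--Schwarz, thereby relying on the existence of a minimizer and on the inequality $B\le A$ (which the paper proves through the Euler--Lagrange equations). You instead exhibit the explicit piecewise test function that is flat outside $[j^*,m]$ and has increments $1/(\pi(k)p_k)$ inside, compute $\mathcal{E}(g,g)=S$ exactly, and lower-bound the variance by restricting the double-sum representation to cross pairs in $[0,j^*]\times[m,n]$. This is more elementary and self-contained: it bypasses both the extremizer and the $B\le A$ direction of Proposition \ref{p-Hardy}, at no cost in the constant ($1/(\pi([m,n])C(m))$ in the case $C(m)=C_-(m)$, matching the paper). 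Both approaches deliver the stated bounds, and your specialization to the median is correct.
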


\begin{thm}\label{t-symgap}
Consider an irreducible birth and death chain on $\{0,...,n\}$ with birth, death and holding rates $p_i,q_i,r_i$ and stationary distribution $\pi$. Let $\lambda$ be the spectral gap and set $N=\lceil n/2\rceil$. Suppose that $p_i=q_{n-i}$ for $0\le i\le n$. Then,
\[
 \frac{1}{4C}\le\lambda\le\frac{1}{C},
\]
where
\[
 C=\max_{0\le i\le N-1}\left\{\pi([0,i])\sum_{j=i}^{N-1}\frac{1}{\pi(j)p_j}\right\}\quad\text{if $n$ is even},
\]
and
\[
 C=\max_{0\le i\le N-1}\left\{\pi([0,i])\left(\sum_{j=i}^{N-2}\frac{1}{\pi(j)p_j}+\frac{1}{2\pi(N-1)p_{N-1}}\right)\right\}\quad\text{if $n$ is odd}.
\]
\end{thm}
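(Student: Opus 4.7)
The strategy is to use the symmetry $p_i=q_{n-i}$ to reduce the problem to a half-chain Dirichlet eigenvalue problem, and then to apply weighted Hardy's inequality (Theorem \ref{t-Hardy} in the Appendix) to that half-chain. First I would verify that the hypothesis $p_i=q_{n-i}$ together with detailed balance forces $\pi(i)=\pi(n-i)$ and, consequently, that the edge weight $\pi(i)p_i$ is invariant under $i\mapsto n-1-i$. This makes the whole reversible structure symmetric under the reflection $\sigma:i\mapsto n-i$ of $\{0,\ldots,n\}$, so the $L^2(\pi)$ space splits orthogonally into the $\sigma$-symmetric and $\sigma$-antisymmetric subspaces, each invariant under $K$.

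The next step is to identify which subspace contains the eigenvector realizing the spectral gap. The constant function, which is the top eigenvector, is symmetric; the standard fact that the first nontrivial eigenvector of a reversible birth-and-death chain has exactly one sign change then forces that sign change to occur at the center of symmetry, so the gap eigenvector is antisymmetric. Equivalently, I would argue directly via the Rayleigh quotient: writing any $f$ as $f=f_s+f_a$, both pieces lie in $L^2(\pi)$, and the Dirichlet form and norm split additively, so
\[
 \lambda=\inf_{f\perp 1}\frac{\mathcal{E}(f,f)}{\|f\|_\pi^2}
\]
is attained on an antisymmetric $f$ (symmetric non-constant test functions can only do worse since they must already be orthogonal to $1$).

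Now I would push the antisymmetry through. For $n=2N$ even, antisymmetry forces $f(N)=0$, and summing the Dirichlet form and norm over $\{0,\ldots,n\}$ gives exactly twice the corresponding sums over $\{0,\ldots,N-1\}$; the factor $2$ cancels in the Rayleigh quotient, and one is left with
\[
 \lambda=\inf_{f(N)=0}\frac{\sum_{j=0}^{N-1}(f(j+1)-f(j))^2\pi(j)p_j}{\sum_{j=0}^{N-1}f(j)^2\pi(j)}.
\]
For $n=2N-1$ odd, antisymmetry gives $f(N)=-f(N-1)$, so the single ``crossing'' edge contributes $(f(N)-f(N-1))^2\pi(N-1)p_{N-1}=4f(N-1)^2\pi(N-1)p_{N-1}$; after halving the symmetric sums on either side, the effective half-problem is one on $\{0,\ldots,N-1\}$ with a Dirichlet boundary ``at $N$'' whose edge weight $\pi(N-1)p_{N-1}$ is replaced by $2\pi(N-1)p_{N-1}$. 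This is precisely the source of the factor $1/2$ multiplying $1/(\pi(N-1)p_{N-1})$ in the odd-$n$ formula for $C$.

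The last step is to apply the weighted Hardy inequality on $\{0,\ldots,N-1\}$ with Neumann condition at $0$ and Dirichlet condition at the right endpoint, with edge weights $\pi(j)p_j$ (adjusted at $j=N-1$ in the odd case) and measure weights $\pi(j)$. The sharp Hardy constant $C$ satisfies the two-sided bound $B\le C\le 4B$, where $B$ is the supremum over $i$ of $\pi([0,i])$ times $\sum_{j\ge i}$ of the reciprocal edge weight — exactly the quantity displayed in the theorem. This yields $1/(4C)\le\lambda\le 1/C$. The main obstacle I anticipate is verifying the odd-$n$ bookkeeping carefully: one must check that doubling the central edge weight is the correct translation of the constraint $f(N-1)+f(N)=0$, and that the Hardy inequality applied to the modified weight still gives the stated $C$ with the clean $1/2$ factor; this is essentially the content invoked in Theorem \ref{t-sym1} of the Appendix, so in a final write-up I would formally cite that theorem after performing the reduction above.
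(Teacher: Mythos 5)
Your plan follows the paper's own route: the hypothesis $p_i=q_{n-i}$ gives $\pi(i)=\pi(n-i)$ and reflection-symmetric edge weights $\pi(i)p_i$, the gap eigenvector is antisymmetric (the paper argues this via monotonicity of the minimizer, you via its single sign change — both work, though your alternative parenthetical that symmetric test functions ``must already be orthogonal to $1$'' is not by itself a reason they do worse), and the resulting half-chain Dirichlet problem, with the central edge weight doubled in the odd case, is handled by the weighted Hardy inequality of Proposition \ref{p-Hardy}, i.e.\ exactly Theorem \ref{t-sym1}. This is essentially the paper's proof, including the correct bookkeeping for the $\tfrac{1}{2\pi(N-1)p_{N-1}}$ term.
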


\begin{rem}
In \cite{SC99}, the author also obtained bounds similar to Theorem \ref{t-symgap} for the case $\pi(i)\ge\pi(i+1)$ with $0\le i<n/2$ using the path technique. For more information on path techniques, see \cite{DS93-1,DS93-2,DS91} and the references therein.
\end{rem}

\section{Examples}
In this section, we will apply the theory developed in the previous section to examples of special interest. First, we give a criterion on the cutoff using the birth and death rates.

\begin{thm}[Cutoffs from birth and death rates]\label{t-cutoffrates}
Let $\mathcal{F}=\{(\Omega_n,K_n,\pi_n)|n=1,2,...\}$ be a family of irreducible birth and death chains on $\Omega_n=\{0,1,...,n\}$ with birth rate, $p_{n,i}$, death rate $q_{n,i}$ and holding rate $r_{n,i}$. Let $\lambda_n$ be the spectral gap of $K_n$. For $n\ge 1$, let $j_n\in\{0,...,n\}$ and set
\[
 t_n=\max\left\{\sum_{k=0}^{j_n-1}\frac{\pi_n([0,k])}{\pi_n(k)p_{n,k}},\sum_{k=j_n+1}^n\frac{\pi_n([k,n])}{\pi_n(k)q_{n,k}}\right\}
\]
and
\[
 \ell_n=\max\left\{\max_{j:j<j_n}\sum_{k=j}^{j_n-1}\frac{\pi_n([0,j])}{\pi_n(k)p_{n,k}},\max_{j:j>j_n}\sum_{k=j_n+1}^j\frac{\pi_n([j,n])}{\pi_n(k)q_{n,k}}\right\}.
\]
Suppose that
\[
 0<\liminf_{n\ra\infty}\pi_n([0,j_n])\le\limsup_{n\ra\infty}\pi_n([0,j_n])<1.
\]
Then, for $\epsilon\in(0,1/2)$ and $\delta\in(0,1)$,
\[
 \lambda_{n}\asymp 1/\ell_n, \quad T_{n,\textnormal{\tiny TV}}^{(c)}(\epsilon)\asymp t_n\asymp T_{n,\textnormal{\tiny TV}}^{(\delta)}(\epsilon).
\]
Furthermore, the following are equivalent.
\begin{itemize}
\item[(1)] $\mathcal{F}_c$ has a cutoff in total variation.

\item[(2)] For $\delta\in(0,1)$, $\mathcal{F}_\delta$ has a cutoff in total variation.

\item[(3)] $\mathcal{F}_c$ has precutoff in total variation.

\item[(4)] For $\delta\in(0,1)$, $\mathcal{F}_\delta$ has a precutoff in total variation.

\item[(5)] $t_n/\ell_n\ra\infty$.
\end{itemize}
\end{thm}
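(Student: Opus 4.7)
The strategy is to reduce Theorem \ref{t-cutoffrates} to Theorem \ref{t-bdc-cut-main} by showing that the quantities $t_n$ and $\ell_n$ defined with the relaxed pivot $j_n$ are comparable, up to multiplicative constants, to the corresponding quantities $\widetilde{t}_n$ and $\widetilde{\ell}_n$ obtained by taking $j_n$ to be a true median $i_n$ (for concreteness, the smallest median). Once those comparisons are in hand, applying Theorem \ref{t-bdc-cut-main} (for the mixing time and the cutoff equivalence $(1)\Leftrightarrow(2)\Leftrightarrow t_n/\ell_n\to\infty$), together with Theorem \ref{t-spectralgap} (for $\lambda_n\asymp 1/\ell_n$) and Theorem \ref{t-csal12-2} (for the precutoff conditions $(3)$ and $(4)$), gives all the conclusions.

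The comparison $t_n\asymp\widetilde{t}_n$ is immediate from the discussion culminating in (\ref{eq-compt_n}), applied with $c_n=\pi_n([0,j_n])$: since by hypothesis $c_n$ is eventually bounded away from both $0$ and $1$, (\ref{eq-compt_n}) yields $\widetilde{t}_n/2 \le t_n \le \widetilde{t}_n/\min\{c_n,1-c_n\}$. This is precisely the argument behind Theorem \ref{t-tvmixingcn}. In particular $T_{n,\textnormal{\tiny TV}}^{(c)}(\epsilon)\asymp t_n \asymp T_{n,\textnormal{\tiny TV}}^{(\delta)}(\epsilon)$ for $\epsilon\in(0,1/2)$ and $\delta\in(0,1)$ follows from Theorem \ref{t-mixingtime} and Theorem \ref{t-mixingcomp}.

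The comparison $\ell_n \asymp \widetilde{\ell}_n$ is the main point requiring work. Assume without loss of generality that $j_n \ge i_n$; the opposite case is symmetric. For the inner sums on the left in the definition of $\ell_n$, I split according to whether $j < i_n$ or $i_n \le j < j_n$. In the first range, $\sum_{k=j}^{j_n-1}\pi_n([0,j])/(\pi_n(k)p_{n,k})$ breaks as $\sum_{k=j}^{i_n-1}+\sum_{k=i_n}^{j_n-1}$; the first piece is a term already present in $\widetilde{\ell}_n$, while the second piece is converted via reversibility $\pi_n(k)p_{n,k}=\pi_n(k+1)q_{n,k+1}$ into $\sum_{k=i_n+1}^{j_n}\pi_n([0,j])/(\pi_n(k)q_{n,k})$ and then bounded against a right-side term of $\widetilde{\ell}_n$ by inserting the factor $\pi_n([0,j])/\pi_n([k,n])$, which is controlled because $\pi_n([0,j])\le\pi_n([0,j_n])\le c_2<1$ and $\pi_n([k,n])\ge \pi_n([j_n,n])\ge 1-c_2'>0$ for $k\le j_n$ under the hypothesis. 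In the second range $i_n\le j<j_n$, the whole inner sum is similarly rewritten by reversibility and then bounded by a right-side term of $\widetilde{\ell}_n$ (up to the same kind of bounded ratio). The symmetric analysis handles the right-hand max in $\ell_n$. The reverse inequality $\widetilde{\ell}_n\lesssim \ell_n$ is easier: every term appearing in $\widetilde{\ell}_n$ either already appears in $\ell_n$ or differs from a term in $\ell_n$ by shifting the pivot, which costs at most a constant factor thanks again to the two-sided bound on $\pi_n([0,j_n])$.

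The main obstacle is precisely this second comparison, since $\ell_n$ involves a maximum in $j$ with $j$-dependent weights $\pi_n([0,j])$ and $\pi_n([j,n])$, so the case analysis according to the relative positions of $j$, $i_n$, and $j_n$ has to be done carefully, each case closed via the reversibility identity and the hypothesis that $\pi_n([0,j_n])$ is eventually in a compact subinterval of $(0,1)$. Once both $t_n\asymp\widetilde{t}_n$ and $\ell_n\asymp\widetilde{\ell}_n$ are established, the conclusions of Theorem \ref{t-cutoffrates} follow directly: the mixing-time and spectral-gap estimates from Theorems \ref{t-bdc-cut-main} and \ref{t-spectralgap}, and the equivalence of all four cutoff/precutoff conditions with $t_n/\ell_n\to\infty$ from Theorem \ref{t-csal12-2} combined with Theorem \ref{t-bdc-cut-main}.
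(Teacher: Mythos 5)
Your proposal is correct, and the mixing-time half is essentially the paper's own argument: the comparison $t_n\asymp \widetilde t_n$ via (\ref{eq-compt_n}) with $c=\pi_n([0,j_n])$ is exactly Theorem \ref{t-tvmixingcn}, and the rest follows from Theorems \ref{t-mixingtime} and \ref{t-mixingcomp}. Where you genuinely diverge is the spectral gap. The paper never compares $\ell_n$ with the median quantity $\widetilde\ell_n$: Theorem \ref{t-gap} (via the weighted Hardy inequality, Theorem \ref{t-Hardy}) is already stated for an \emph{arbitrary} pivot $m$, namely
\[
\frac{1}{4C(m)}\le\lambda\le\frac{1}{\min\{\pi([0,m]),\pi([m,n])\}\,C(m)},
\]
so taking $m=j_n$ (for which $C(j_n)=\ell_n$) and using the hypothesis that $\pi_n([0,j_n])$ eventually lies in a compact subinterval of $(0,1)$ gives $\lambda_n\asymp 1/\ell_n$ in one line. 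Your route instead establishes $\ell_n\asymp\widetilde\ell_n$ by a case analysis on the relative positions of $j$, $i_n$ and $j_n$, using reversibility $\pi_n(k)p_{n,k}=\pi_n(k+1)q_{n,k+1}$ together with $\pi_n([0,j_n])\le c_2<1$ and $\pi_n([0,i_n])\ge 1/2$, and then applies the median version, Theorem \ref{t-spectralgap}. The cases you sketch do close (each stray block of terms between $i_n$ and $j_n$ gets absorbed into a term of the other quantity at the cost of a factor like $1/(1-c_2)$ or $2$), so the argument is valid; it just re-proves, by hand and only in the comparable-pivot regime, what the general-pivot form of Theorem \ref{t-gap} already provides with an explicit constant $1/\min\{\pi([0,m]),\pi([m,n])\}$. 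The final deductions, the equivalences (1)--(5) obtained by combining $T^{(c)}_{n,\textnormal{\tiny TV}}(\epsilon)\asymp t_n$ and $\lambda_n\asymp 1/\ell_n$ with conditions (5)--(7) of Theorem \ref{t-csal12-2}, agree with the paper.
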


The above theorem is obvious from Theorems \ref{t-csal12-2}, \ref{t-tvmixingcn} and \ref{t-gap}. We use two classical examples, simple random walks and Ehrenfest chains, to illustrate how to apply Theorem \ref{t-cutoffrates} to determine the total variation cutoff and mixing times.

\begin{ex}[Simple random walks on finite paths]\label{ex-srw}
For $n\ge 1$, the simple random walk on $\{0,...,n\}$ is a birth and death chain with $p_{n,i}=q_{n,i+1}=1/2$ for $0\le i<n$ and $r_{n,0}=r_{n,n}=1/2$. It is clear that $K_n$ is irreducible and aperiodic with uniform stationary distribution. Let $t_n,\ell_n$ be the constants in Theorem \ref{t-cutoffrates}. It is an easy exercise to show that $\ell_n\asymp n^2\asymp t_n$. By Theorem \ref{t-cutoffrates}, neither $\mathcal{F}_c$ nor $\mathcal{F}_\delta$ has total variation precutoff, but $T_{n,\text{\tiny TV}}^{(c)}(\epsilon)\asymp n^2\asymp T_{n,\text{\tiny TV}}^{(\delta)}(\epsilon)$ for $\epsilon\in(0,1/2)$ and $\delta\in(0,1)$. In fact, one may use a hitting time statement to prove that the mixing time has order at least $n^2$, when $\epsilon\in[1/2,1)$. This implies that the above approximation of mixing time holds for $\epsilon\in(0,1)$.
\end{ex}

\begin{ex}[Ehrenfest chains]\label{ex-Ehrenfest}
Consider the Ehrenfest chain on $\{0,...,n\}$, which is a birth and death chain with rates $p_{n,i}=1-i/n$ and $q_{n,i}=i/n$. It is obvious that $K_n$ is irreducible and periodic with stationary distribution $\pi_n(i)=2^{-n}\binom{n}{i}$. An application of the representation theory shows that, for $0\le i\le n$, $2i/n$ is an eigenvalue of $I-K_n$. Let $\lambda_n,s_n$ be the constants in Theorem \ref{t-csal12-2}. Clearly, $\lambda_n=2/n$ and $s_n\asymp n\log n$ and, by Theorem \ref{t-csal12-2}, both $\mathcal{F}_c$ and $\mathcal{F}_\delta$ have a total variation cutoff. Note that, as a simple corollary, one obtains the non-trivial estimates
\[
 \sum_{i=0}^{\lceil\frac{n}{2}\rceil-1}\frac{\binom{n}{0}+\cdots+\binom{n}{i}}{\binom{n}{i}}\asymp n\log n,\quad \max_{0\le i<n/2}\sum_{j=0}^i\binom{n}{j}\times\sum_{j=i}^{\lceil\tfrac{n}{2}\rceil-1}\binom{n}{i}^{-1}\asymp n.
\]
For a detailed computation on the total variation and the $L^2$-distance, see e.g. \cite{D88}.
\end{ex}

In the next subsections, we consider birth and death chains of special types.

\subsection{Chains with valley stationary distributions}
In this subsection, we consider birth and death chains with valley stationary distribution. For $n\ge 1$, let $\Omega_n=\{0,1,...,n\}$ and $K_n$ be an irreducible birth and death chain on $\Omega_n$ with birth, death and holding rates, $p_{n,i},q_{n,i},r_{n,i}$. Suppose that there is $j_n\in\Omega_n$ such that
\begin{equation}\label{eq-valley}
 p_{n,i}\le q_{n,i+1},\,\forall i<j_n,\quad p_{n,i}\ge q_{n,i+1},\,\forall i\ge j_n.
\end{equation}
Obviously, the stationary distribution $\pi_n$ of $K_n$ satisfies $\pi_n(i)\ge\pi_n(i+1)$ for $i<j_n$ and $\pi_n(i)\le\pi_n(i+1)$ for $i\ge j_n$.

Let $t_n,\ell_n$ be the constants in Theorem \ref{t-cutoffrates} and write
\[
\ell_n=\max\left\{\max_{j:j<j_n}\sum_{k=j+1}^{j_n}\frac{\pi_n([0,j])}{\pi_n(k)q_{n,k}},\max_{j:j>j_n}\sum_{k=j_n}^{j-1}\frac{\pi_n([j,n])}{\pi_n(k)p_{n,k}}\right\}.
\]
Set
\[
 M_L=\max_{0<i\le j_n}q_{n,i},\,\, m_L=\min_{0<i\le j_n}q_{n,i},\,\, M_R=\max_{j_n\le i<n}p_{n,i},\,\, m_R=\min_{j_n\le i<n}p_{n,i}.
\]
Clearly,
\[
 \ell_n\le\max\left\{\frac{\pi_n([0,j_n])}{m_L}\sum_{i=0}^{j_n}\frac{1}{\pi_n(i)},\frac{\pi_n([j_n,n])}{m_R}\sum_{i=j_n}^n\frac{1}{\pi_n(i)}\right\}.
\]
Let $j_n'$ be such that $\pi_n([0,j_n'])\ge\pi_n([0,j_n])/2$ and $\pi_n([j_n',j_n])\ge\pi_n([0,j_n])/2$. Note that if $j_n\ge 1$, then  $j_n\ge\max\{2j_n',j_n'+1\}$. By (\ref{eq-valley}), this implies
\[
 \sum_{k=j_n'+1}^{j_n}\frac{\pi_n([0,j_n'])}{\pi_n(k)}\ge\frac{\pi_n([0,j_n])}{4}\sum_{k=j_n'}^{j_n}\frac{1}{\pi_n(k)}\ge\frac{\pi_n([0,j_n])}{8}\sum_{k=0}^{j_n}\frac{1}{\pi_n(k)}.
\]
One can derive a similar inequality from the other end point and this yields
\[
 \ell_n\ge\frac{1}{8}\min\left\{\frac{\pi_n([0,j_n])}{M_L}\sum_{i=0}^{j_n}\frac{1}{\pi_n(i)},\frac{\pi_n([j_n,n])}{M_R}\sum_{i=j_n}^n\frac{1}{\pi_n(i)}\right\}.
\]
For $t_n$, note that
\[
 \frac{\pi_n([0,j_n-1])}{2}\sum_{k=0}^{j_n-1}\frac{1}{\pi_n(k)}\le\sum_{k=0}^{j_n-1}\frac{\pi_n([0,k])}{\pi_n(k)}\le\pi_n([0,j_n-1])\sum_{k=0}^{j_n-1}\frac{1}{\pi_n(k)}
\]
and
\[
 \frac{\pi_n([j_n+1,n])}{2}\sum_{k=j_n+1}^{n}\frac{1}{\pi_n(k)}\le\sum_{k=j_n+1}^{n}\frac{\pi_n([k,n])}{\pi_n(k)}\le\pi_n([j_n+1,n])\sum_{k=j_n+1}^{n}\frac{1}{\pi_n(k)}
\]
This implies
\[
 t_n\le\max\left\{\frac{\pi_n([0,j_n])}{m_L}\sum_{i=0}^{j_n}\frac{1}{\pi_n(i)},\frac{\pi_n([j_n,n])}{m_R}\sum_{i=j_n}^n\frac{1}{\pi_n(i)}\right\}
\]
and
\[
 t_n\ge\frac{1}{8}\max\left\{\frac{\pi_n([0,j_n])}{M_L}\sum_{i=0}^{j_n}\frac{1}{\pi_n(i)},\frac{\pi_n([j_n,n])}{M_R}\sum_{i=j_n}^n\frac{1}{\pi_n(i)}\right\}
\]
The following theorem is an immediate consequence of the above discussion and Theorem \ref{t-cutoffrates}.

\begin{thm}\label{t-valley}
Let $\mathcal{F}=\{(\Omega_n,K_n,\pi_n)|n=1,2,...\}$ be a family of birth and death chains satisfying \textnormal{(\ref{eq-valley})}. Assume that $\pi_n([0,j_n])\asymp\pi_n([j_n,n])$ and
\[
 \max_{0<i\le j_n}q_{n,i}\asymp\min_{0<i\le j_n}q_{n,i},\quad \max_{j_n\le i<n}p_{n,i}\asymp\min_{j_n\le i<n}p_{n,i}.
\]
Then, there is no cutoff for $\mathcal{F}_c,\mathcal{F}_\delta$ and, for $\epsilon\in(0,1/2)$ and $\delta\in(0,1)$,
\[
 T_{n,\textnormal{\tiny TV}}^{(c)}(\epsilon)\asymp T_{n,\textnormal{\tiny TV}}^{(\delta)}(\epsilon)\asymp\frac{1}{\lambda_n}\asymp \max\left\{\frac{1}{q_{n,j_n}}\sum_{i=0}^{j_n}\frac{1}{\pi_n(i)},\frac{1}{p_{n,j_n}}\sum_{i=j_n}^n\frac{1}{\pi_n(i)}\right\}.
\]
\end{thm}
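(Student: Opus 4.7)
The plan is to show, under the stated hypotheses, that the quantities $t_n$ and $\ell_n$ associated with $j_n$ in Theorem \ref{t-cutoffrates} are both comparable to
\[
 R_n:=\max\left\{\frac{1}{q_{n,j_n}}\sum_{i=0}^{j_n}\frac{1}{\pi_n(i)},\;\frac{1}{p_{n,j_n}}\sum_{i=j_n}^{n}\frac{1}{\pi_n(i)}\right\},
\]
and then to read off the conclusions from Theorem \ref{t-cutoffrates}.

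First I would collect the four estimates on $t_n$ and $\ell_n$ displayed immediately before the theorem: each quantity has an upper bound with prefactors $1/m_L,1/m_R$ and a (constant times) lower bound with prefactors $1/M_L,1/M_R$. The hypothesis $M_L\asymp m_L$ forces $m_L\asymp q_{n,j_n}$ (since $q_{n,j_n}\in[m_L,M_L]$), and likewise $m_R\asymp p_{n,j_n}$. Setting
\[
 A_n=\frac{\pi_n([0,j_n])}{q_{n,j_n}}\sum_{i=0}^{j_n}\frac{1}{\pi_n(i)},\qquad B_n=\frac{\pi_n([j_n,n])}{p_{n,j_n}}\sum_{i=j_n}^{n}\frac{1}{\pi_n(i)},
\]
the bounds on $t_n$ collapse to $t_n\asymp\max\{A_n,B_n\}$. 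For $\ell_n$ the displayed upper bound is already $\max\{A_n,B_n\}$ up to constants; the displayed lower-bound argument, picking $j=j_n'$ on the left, actually produces $\ell_n\ge A_n/8$, and the symmetric argument on the right gives $\ell_n\ge B_n/8$, so $\ell_n\ge\tfrac{1}{8}\max\{A_n,B_n\}$ (this is stronger than the $\min$ version displayed, but is what is really needed). Hence $t_n\asymp\ell_n\asymp\max\{A_n,B_n\}$.

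Finally, the hypothesis $\pi_n([0,j_n])\asymp\pi_n([j_n,n])$ together with $\pi_n([0,j_n])+\pi_n([j_n,n])=1+\pi_n(j_n)\in[1,2]$ confines both probabilities to a compact subinterval of $(0,1]$, so $\max\{A_n,B_n\}\asymp R_n$ and thus $t_n\asymp\ell_n\asymp R_n$. Feeding this into Theorem \ref{t-cutoffrates} yields $\lambda_n\asymp 1/R_n$ and $T^{(c)}_{n,\textnormal{\tiny TV}}(\epsilon)\asymp T^{(\delta)}_{n,\textnormal{\tiny TV}}(\epsilon)\asymp R_n$, while boundedness of $t_n/\ell_n$ rules out a cutoff via the equivalence there. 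The only mildly delicate point I anticipate is verifying the hypothesis $\limsup\pi_n([0,j_n])<1$ required by Theorem \ref{t-cutoffrates}: it can fail only if $\pi_n$ concentrates at $j_n$, and in that edge case one shifts $j_n$ to the nearest genuine median of $\pi_n$ (invoking Theorem \ref{t-tvmixingcn} if necessary). Because $M_L\asymp m_L$ and $M_R\asymp m_R$, none of the orders above change under this shift, so the argument goes through unaltered.
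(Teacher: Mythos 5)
Your proposal is correct and matches the paper's intended route: the theorem is presented there as an immediate consequence of the four displayed bounds on $t_n$ and $\ell_n$ combined with Theorem \ref{t-cutoffrates}, exactly as you use them, and your observation that the lower bound on $\ell_n$ actually holds with $\max$ in place of the displayed $\min$ (because the choice $j=j_n'$ and its mirror each separately bound the outer maximum from below) is the right reading of the argument and is indeed what the conclusion requires. The one delicate point you flag, $\limsup_n\pi_n([0,j_n])<1$, is in fact automatic here and needs no shifting of $j_n$: for a valley distribution $j_n$ is a \emph{minimum} of $\pi_n$, the hypothesis $\pi_n([0,j_n])\asymp\pi_n([j_n,n])$ forces $0<j_n<n$ for large $n$, and then monotonicity of $\pi_n$ on $[j_n,n]$ gives $\pi_n(j_n)\le\pi_n([j_n,n])/(n-j_n+1)$, hence $\pi_n([j_n+1,n])\ge\pi_n([j_n,n])/2$, which is bounded away from $0$.
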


For an illustration of the above theorem, we consider the following Markov chains. For $n\ge 1$, let $\Omega_n=\{0,1,...,n\}$, $\pi_n$ be a non-uniform probability distribution on $\Omega_n$ satisfying (\ref{eq-valley}) and $M_n$ be a transition matrix given by
\begin{equation}\label{eq-Met}
 M_n(i,j)=\begin{cases}1/2&\text{for }j=i-1,i\le j_n,\\1/2&\text{for }j=i+1,i\ge j_n,\\\pi_n(i+1)/(2\pi_n(i))&\text{for }j=i+1,i<j_n,\\\pi_n(i-1)/(2\pi_n(i))&\text{for }j=i-1,i>j_n,\\1/2-\pi_n(i+1)/(2\pi_n(i))&\text{for }j=i<j_n,\\1/2-\pi_n(i-1)/(2\pi_n(i))&\text{for }j=i>j_n.\end{cases}
\end{equation}
Note that $M_n$ is the Metropolis chain for $\pi_n$ associated to the simple random walk on $\Omega_n$. For more information on the Metropolis chain, see \cite{DS98} and the references therein. The next theorem is a corollary of Theorem \ref{t-valley}.

\begin{thm}\label{t-met}
Let $\mathcal{F}=\{(\Omega_n,M_n,\pi_n)|n=1,2,..\}$ be the family of Metropolis chains satisfying \textnormal{(\ref{eq-valley})-(\ref{eq-Met})}. Suppose $\pi_n([0,j_n])\asymp \pi_n([j_n,n])$. Then, neither $\mathcal{F}_c$ nor $\mathcal{F}_\delta$ has a total variation precutoff but, for $\epsilon\in(0,1/2)$ and $\delta\in(0,1)$,
\[
 T_{n,\textnormal{\tiny TV}}^{(c)}(\epsilon)\asymp \sum_{i=0}^n\frac{1}{\pi_n(i)}\asymp T_{n,\textnormal{\tiny TV}}^{(\delta)}(\epsilon).
\]
\end{thm}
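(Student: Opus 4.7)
The plan is to derive Theorem~\ref{t-met} as a direct corollary of Theorem~\ref{t-valley}. First I would identify the birth and death rates of the Metropolis chain $M_n$: from \eqref{eq-Met} one reads off
\[
 q_{n,i}=\tfrac{1}{2}\ \text{for}\ 0<i\le j_n,\qquad p_{n,i}=\tfrac{1}{2}\ \text{for}\ j_n\le i<n,
\]
while $p_{n,i}=\pi_n(i+1)/(2\pi_n(i))$ for $i<j_n$ and $q_{n,i}=\pi_n(i-1)/(2\pi_n(i))$ for $i>j_n$. I would then verify the valley condition \eqref{eq-valley}: for $i<j_n$ we have $p_{n,i}=\pi_n(i+1)/(2\pi_n(i))\le 1/2=q_{n,i+1}$ because $\pi_n$ is non-increasing on $[0,j_n]$; for $i\ge j_n$ the symmetric inequality $p_{n,i}\ge q_{n,i+1}$ follows similarly from $\pi_n$ being non-decreasing on $[j_n,n]$. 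So Theorem~\ref{t-valley} applies.

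Next I would check the two comparability hypotheses of Theorem~\ref{t-valley}. The balance condition $\pi_n([0,j_n])\asymp\pi_n([j_n,n])$ is assumed. The rate-comparability condition is immediate: both
\[
 \max_{0<i\le j_n}q_{n,i}=\min_{0<i\le j_n}q_{n,i}=\tfrac{1}{2},\qquad
 \max_{j_n\le i<n}p_{n,i}=\min_{j_n\le i<n}p_{n,i}=\tfrac{1}{2},
\]
so the ratios equal $1$. Applying Theorem~\ref{t-valley} therefore gives that neither $\mathcal{F}_c$ nor $\mathcal{F}_\delta$ has a total variation cutoff, and
\[
 T_{n,\textnormal{\tiny TV}}^{(c)}(\epsilon)\asymp T_{n,\textnormal{\tiny TV}}^{(\delta)}(\epsilon)\asymp \frac{1}{\lambda_n}\asymp
 \max\left\{\tfrac{1}{q_{n,j_n}}\sum_{i=0}^{j_n}\tfrac{1}{\pi_n(i)},\ \tfrac{1}{p_{n,j_n}}\sum_{i=j_n}^{n}\tfrac{1}{\pi_n(i)}\right\}.
\]

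To finish I would translate the $\max$ on the right to the stated form. Since $q_{n,j_n}=p_{n,j_n}=1/2$, the max equals $2\max\{\sum_{i=0}^{j_n}1/\pi_n(i),\sum_{i=j_n}^{n}1/\pi_n(i)\}$, and for any positive $a,b$ one has $\max\{a,b\}\asymp a+b$, so the max is comparable to $\sum_{i=0}^{n}1/\pi_n(i)$ (the single overlapping term $1/\pi_n(j_n)$ is harmless). Finally, the absence of precutoff (not only of cutoff) follows because Theorem~\ref{t-valley} is proved via Theorem~\ref{t-cutoffrates}, in which cutoff and precutoff are equivalent; equivalently, the rate-comparability hypotheses force $t_n\asymp\ell_n$ and thus $t_n/\ell_n\not\to\infty$, which rules out precutoff by item (5) of Theorem~\ref{t-cutoffrates}.

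There is no genuine obstacle: the entire proof is a verification that the hypotheses of Theorem~\ref{t-valley} hold and a cosmetic simplification of the resulting expression. The only subtle point to flag is the passage from ``no cutoff'' in Theorem~\ref{t-valley} to ``no precutoff'' in Theorem~\ref{t-met}, which requires invoking the cutoff/precutoff equivalence in Theorem~\ref{t-cutoffrates} rather than Theorem~\ref{t-valley} alone.
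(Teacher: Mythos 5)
Your proposal is correct and is exactly the route the paper intends: Theorem \ref{t-met} is stated there as a corollary of Theorem \ref{t-valley}, and your verification of the valley condition, the trivial rate-comparability (all relevant rates equal $1/2$), and the simplification of the resulting $\max$ to $\sum_{i=0}^n 1/\pi_n(i)$ is the whole argument. You are also right to flag that upgrading ``no cutoff'' to ``no precutoff'' needs the precutoff/cutoff equivalence from Theorem \ref{t-cutoffrates} (equivalently Theorem \ref{t-csal12-2}), which the hypotheses supply since they force $t_n\asymp\ell_n$.
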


\begin{ex}
Let $a>0$ and $\check{\pi}_{n,a},\hat{\pi}_{n,a}$ be probability measures on $\{0,\pm 1,...,\pm n\}$ given by
\begin{equation}\label{eq-checkhat}
 \check{\pi}_{n,a}(i)=\check{c}_{n,a}(|i|+1)^a,\quad\hat{\pi}_{n,a}(i)=\hat{c}_{n,a}(n-|i|+1)^a,
\end{equation}
where $\check{c}_{n,a},\hat{c}_{n,a}$ are normalizing constants. Let $\check{\mathcal{F}},\hat{\mathcal{F}}$ be families of the Metropolis chains for $\check{\pi}_{n,a},\hat{\pi}_{n,a}$ associated to the simple random walks on $\{0,\pm 1,...,\pm n\}$, that is,
\[
 \check{M}_{n,a}(i,j)=\check{M}_{n,a}(-i,-j),\quad \hat{M}_{n,a}(i,j)=\hat{M}_{n,a}(-i,-j)
\]
and
\[
 \check{M}_{n,a}(i,j)=\begin{cases}\frac{1}{2}&\text{if }j=i+1,i\in[0,n-1]\\\frac{i^a}{2(i+1)^a}&\text{if }j=i-1,i\in[1,n]\\\frac{(i+1)^a-i^a}{2(i+1)^a}&\text{if }j=i,i\notin\{0,n\}\\1-\frac{n^a}{2(n+1)^a}&\text{if }i=j=n\end{cases}
\]
and
\[
 \hat{M}_{n,a}(i,j)=\begin{cases}\frac{1}{2}&\text{if }j=i-1,i\in[1,n]\\\frac{(n-i)^a}{2(n-i+1)^a}&\text{if }j=i+1,i\in[0,n-1]\\\frac{(n-i+1)^a-(n-i)^a}{2(n-i+1)^a}&\text{if }j=i\ne 0\\1-\frac{n^a}{(n+1)^a}&\text{if }i=j=0\end{cases}.
\]

Let $\check{\lambda}_{n,a},\hat{\lambda}_{n,a}$ and $\check{T}_{n,a},\hat{T}_{n,a}$ be the spectral gaps and total variation mixing times of $\check{M}_{n,a},\hat{M}_{n,a}$. It has been proved in \cite{CSal12,SC99} that there is $C>1$ such that, for all $a>0$ and $n\ge 1$,
\[
 \frac{1}{C\check{\lambda}_{n,a}}\asymp n^a\left(\left(1+\frac{1}{n}\right)^a+\frac{n}{1+a}\right)(1+v(n,a))\le\frac{C}{\check{\lambda}_{n,a}}
\]
and
\[
 \frac{1}{C\hat{\lambda}_{n,a}}\le\frac{(n+a)^2}{(1+a)^2}\le\frac{C}{\hat{\lambda}_{n,a}},
\]
where $v(n,1)=\log n$ and $v(n,a)=(n^{1-a}-1)/(1-a)$ for $a\ne 1$. By Theorem \ref{t-valley},  $\check{\mathcal{F}}_c$ and $\check{\mathcal{F}}_\delta$ have no cutoff in total variation but, for fixed $a>0$, $\epsilon\in(0,1/2)$ and $\delta\in(0,1)$,
\[
 \check{T}_{n,a}^{(c)}(\epsilon)\asymp\check{T}_{n,a}^{(\delta)}(\epsilon)\asymp\begin{cases}n^2&\text{if }a\in(0,1)\\n^2\log n&\text{if }a=1\\n^{1+a}&\text{if }a\in(1,\infty)\end{cases}.
\]
The above result in continuous time case is also obtained in \cite{SC99}.

To see the cutoff for $\hat{\mathcal{F}}$, let
\[
 t_n=\sum_{k=0}^{n-1}\frac{\hat{\pi}_{n,a}([-n,-n+k])}{\hat{\pi}_{n,a}(-n+k)}=\sum_{k=1}^nk^{-a}\sum_{j=1}^kj^a.
\]
By Theorems \ref{t-upper}-\ref{t-lower}, we have
\[
 \frac{2t_n}{3}\le\hat{T}_{n,a}^{(c)}(1/10)\le 3600t_n.
\]
Note that, for $k\ge 1$ and $a>0$,
\[
 \frac{k^a(k+a)}{2(1+a)}\le \sum_{j=1}^kj^a\le \frac{2k^a(k+a)}{1+a}.
\]
This implies
\[
 \frac{n(n+a)}{6(1+a)}\le\hat{T}_{n,a}^{(c)}(1/10)\le \frac{14400n(n+a)}{1+a}.
\]
We collect the above results in the following theorem.

\begin{thm}\label{t-checkhat}
For $n\ge 1$, let $a_n>0$ and $\check{\pi}_{n,a_n},\hat{\pi}_{n,a_n}$ be probability measures given by \textnormal{(\ref{eq-checkhat})}. Let $\check{\mathcal{F}},\hat{\mathcal{F}}$ be the families of Metropolis chains for $\check{\pi}_{n,a_n},\hat{\pi}_{n,a_n}$ as above with total variation mixing time $\check{T}_{n,\textnormal{\tiny TV}},\hat{T}_{n,\textnormal{\tiny TV}}$. Then, for $\epsilon\in(0,1/2)$ and $\delta\in(0,1)$,
\[
 \hat{T}_{n,\textnormal{\tiny TV}}^{(c)}(\epsilon)\asymp\hat{T}_{n,\textnormal{\tiny TV}}^{(\delta)}(\epsilon)\asymp\frac{n(n+a_n)}{1+a_n}
\]
and
\[
 \check{T}_{n,\textnormal{\tiny TV}}^{(c)}(\epsilon)\asymp\check{T}_{n,\textnormal{\tiny TV}}^{(\delta)}(\epsilon)\asymp n^{a_n}\left(\left(1+\frac{1}{n}\right)^{a_n}+\frac{n}{1+a_n}\right)(1+v(n,a_n)),
\]
where $v(n,1)=\log n$ and $v(n,a)=(n^{1-a}-1)/(1-a)$ for $a\ne 1$.

Moreover, neither $\check{\mathcal{F}}_c$ nor $\check{\mathcal{F}}_\delta$ has a total variation cutoff. Also, $\hat{\mathcal{F}}_c$ and $\hat{\mathcal{F}}_\delta$ have a total variation cutoff if and only if $a_n\ra\infty$.
\end{thm}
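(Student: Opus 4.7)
The plan is to treat the two families in turn, reusing the machinery developed earlier in the paper.

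For $\check{\mathcal{F}}$: The distribution $\check{\pi}_{n,a_n}(i)\propto(|i|+1)^{a_n}$ is symmetric about $0$ and is a valley with bottom at $0$, so the Metropolis chain fits the form (\ref{eq-Met}) with $j_n=0$ and satisfies the valley condition (\ref{eq-valley}). The rates on each side away from $0$ that appear in the $\max/\min$ hypotheses of Theorem \ref{t-valley} are identically $1/2$, and $\check{\pi}_{n,a_n}([-n,0])=\check{\pi}_{n,a_n}([0,n])$ by symmetry. Thus Theorem \ref{t-valley} applies and gives
\[
\check{T}^{(c)}_{n,\textnormal{\tiny TV}}(\epsilon)\asymp\check{T}^{(\delta)}_{n,\textnormal{\tiny TV}}(\epsilon)\asymp\frac{1}{\check{\lambda}_{n,a_n}}\asymp\sum_{i=-n}^{n}\frac{1}{\check{\pi}_{n,a_n}(i)},
\]
and no total-variation precutoff (hence no cutoff) for either $\check{\mathcal{F}}_c$ or $\check{\mathcal{F}}_\delta$. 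To identify the order, I compute $\sum_i 1/\check{\pi}_{n,a_n}(i)=\check{Z}_{n,a_n}(1+2\sum_{k=1}^n (k+1)^{-a_n})$ using $\check{Z}_{n,a_n}\asymp n^{a_n}((1+1/n)^{a_n}+n/(1+a_n))$ (first term dominates when $a_n\gg n$, second when $a_n=O(n)$) and $\sum_{k=1}^n k^{-a_n}\asymp 1+v(n,a_n)$; multiplying yields the stated asymptotic for $\check{T}$.

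For $\hat{\mathcal{F}}$: Here $\hat{\pi}_{n,a_n}$ is a mountain peaking at $0$, so by symmetry $\hat{\pi}_{n,a_n}([-n,0])=\hat{\pi}_{n,a_n}([0,n])\ge 1/2$, and $i_0=0$ is a valid median in Theorems \ref{t-upper} and \ref{t-lower}. On the right half of $\hat{M}_{n,a_n}$, the toward-peak rate is exactly $1/2$, so the relevant sum in Theorem \ref{t-upper} simplifies, by symmetry, to
\[
t_n\asymp \sum_{k=1}^n k^{-a_n}\sum_{j=1}^k j^{a_n},
\]
as noted in the excerpt. The elementary two-sided estimate $\sum_{j=1}^k j^a\asymp k^a(k+a)/(1+a)$, with constants uniform in $a>0$, then gives $t_n\asymp n(n+a_n)/(1+a_n)$, and Theorems \ref{t-upper}–\ref{t-lower} translate this into the claimed asymptotic $\hat{T}^{(c)}_{n,\textnormal{\tiny TV}}(\epsilon)\asymp\hat{T}^{(\delta)}_{n,\textnormal{\tiny TV}}(\epsilon)\asymp n(n+a_n)/(1+a_n)$.

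For the cutoff equivalence for $\hat{\mathcal{F}}$, I invoke Theorem \ref{t-csal12-2}: cutoff is equivalent to $\hat{T}^{(c)}_{n,\textnormal{\tiny TV}}(\epsilon)\,\hat{\lambda}_{n,a_n}\to\infty$. Combining the mixing-time asymptotic with the cited gap bound $\hat{\lambda}_{n,a_n}\asymp(1+a_n)^2/(n+a_n)^2$ yields $\hat{T}^{(c)}\hat{\lambda}\asymp n(1+a_n)/(n+a_n)$. A brief case split (the ratio is $\asymp 1+a_n$ when $a_n=O(n)$ and $\asymp n$ when $a_n/n\to\infty$, the latter already forcing $a_n\to\infty$) shows that the product diverges iff $a_n\to\infty$. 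The main obstacle is technical rather than conceptual: keeping constants uniform in $a_n$, in particular in the estimate $\sum_{j=1}^k j^a\asymp k^a(k+a)/(1+a)$ and in the bookkeeping for $\check{Z}_{n,a_n}$; with this uniformity in hand, the earlier theorems assemble the result directly.
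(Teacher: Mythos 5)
Your proposal is correct and follows essentially the same route as the paper: Theorem \ref{t-valley} (equivalently Theorem \ref{t-met}) for the valley family $\check{\mathcal{F}}$, the hitting-time bounds of Theorems \ref{t-upper}--\ref{t-lower} together with the elementary estimate $\sum_{j=1}^k j^a\asymp k^a(k+a)/(1+a)$ for $\hat{\mathcal{F}}$, and the product criterion of Theorem \ref{t-csal12-2} with the cited spectral-gap asymptotics to settle the cutoff question. The only cosmetic difference is that for $\check{\mathcal{F}}$ you evaluate $\sum_i 1/\check{\pi}_{n,a_n}(i)$ directly rather than quoting the gap estimate from \cite{CSal12,SC99} and invoking $\check{T}\asymp 1/\check{\lambda}$, which is a fine (and slightly more self-contained) way to identify the same order.
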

\end{ex}

\subsection{Chains with monotonic stationary distributions}
In this subsection, we consider birth and death chains with monotonic stationary distributions. For $n\ge 1$, let $\Omega_n=\{0,1,...,n\}$ and $K_n$ be a birth and death chain on $\Omega_n$ with birth, death and holding rates, $p_{n,i},q_{n,i},r_{n,i}$. Suppose that
\begin{equation}\label{eq-mono}
 p_{n,i}\ge q_{n,i+1},\quad\forall 0\le i<n.
\end{equation}
If $K_n$ is irreducible, then the stationary distribution $\pi_n$ satisfying $\pi_n(i)\le\pi_n(i+1)$ for $0\le i<n$. Let $j_n\in\Omega_n$ and $t_n,\ell_n$ be the constants in Theorem \ref{t-cutoffrates}. Assume that $\pi_n([0,j_n])\asymp\pi_n([j_n,n])$ and
\begin{equation}\label{eq-mono2}
 \max_{0\le i<j_n}p_{n,i}\asymp \min_{0\le i<j_n}p_{n,i},\quad \max_{j_n\le i<n}p_{n,i}\asymp \min_{j_n\le i<n}p_{n,i}.
\end{equation}
Using a discussion similar to that in front of Theorem \ref{t-valley}, one can show that
\[
 t_n\asymp\max\left\{\frac{1}{p_{n,1}}\sum_{k=0}^{j_n-1}\frac{\pi_n([0,k])}{\pi_n(k)},\frac{1}{p_{n,j_n}}\sum_{k=j_n}^n\frac{1}{\pi_n(k)}\right\}
\]
and
\[
 \ell_n\asymp\max\left\{\frac{1}{p_{n,1}}\max_{0\le j<j_n}\sum_{k=j}^{j_n-1}\frac{\pi_n([0,j])}{\pi_n(k)},\frac{1}{p_{n,j_n}}\sum_{k=j_n}^n\frac{1}{\pi_n(k)}\right\}.
\]
This leads to the following theorem.

\begin{thm}\label{t-mono}
Let $\mathcal{F}=\{(\Omega_n,K_n,\pi_n)|n=1,2,...\}$ be a family of irreducible birth and death chains with $\Omega_n=\{0,1,...,n\}$ and birth, death and holding rates $p_{n,i},q_{n,i},r_{n,i}$. Let $\lambda_n,T_{n,\textnormal{\tiny TV}}$ be the spectral gap and total variation mixing time of $K_n$ and set
\[
 u_n=\sum_{k=0}^{j_n-1}\frac{\pi_n([0,k])}{\pi_n(k)},\quad v_n=\max_{0\le j<j_n}\sum_{k=j}^{j_n-1}\frac{\pi_n([0,j])}{\pi_n(k)},\quad w_n=\sum_{k=j_n}^n\frac{1}{\pi_n(k)}.
\]
Assume that $\pi_n([0,j_n])\asymp \pi_n([j_n,n])$ and \textnormal{(\ref{eq-mono2})} holds. Then, for $\epsilon\in(0,1/2)$ and $\delta\in(0,1)$,
\[
 \lambda_n^{-1}\asymp\max\left\{\frac{v_n}{p_{n,1}},\frac{w_n}{p_{n,j_n}}\right\},\quad T_{n,\textnormal{\tiny TV}}^{(c)}(\epsilon)\asymp T_{n,\textnormal{\tiny TV}}^{(\delta)}(\epsilon)\asymp \max\left\{\frac{u_n}{p_{n,1}},\frac{w_n}{p_{n,j_n}}\right\}.
\]
Moreover, $\mathcal{F}_c$ and $\mathcal{F}_\delta$ have a total variation cutoff if and only if
\[
 u_n/v_n\ra\infty,\quad (u_np_{n,j_n})/(w_np_{n,1})\ra\infty.
\]
\end{thm}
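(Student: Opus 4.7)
The approach is to derive Theorem \ref{t-mono} from Theorem \ref{t-cutoffrates} by simplifying the constants $t_n$ and $\ell_n$ appearing there, using the monotonicity of $\pi_n$ together with the comparability assumptions on the birth rates. First, I would check that the hypothesis of Theorem \ref{t-cutoffrates} on $\pi_n([0,j_n])$ is satisfied: the identity $\pi_n([0,j_n]) + \pi_n([j_n+1,n]) = 1$ together with $\pi_n([0,j_n]) \asymp \pi_n([j_n,n])$ forces both to lie strictly between two positive constants. For the left-side sums (over $0 \le k < j_n$), the comparability of $\{p_{n,i}\}_{0\le i<j_n}$ lets me pull a factor comparable to $1/p_{n,1}$ out of each sum, giving the $u_n/p_{n,1}$ and $v_n/p_{n,1}$ contributions to $t_n$ and $\ell_n$ respectively. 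For the right-side sums (over $k>j_n$), I apply detailed balance $\pi_n(k)q_{n,k}=\pi_n(k-1)p_{n,k-1}$ and the comparability of $\{p_{n,i}\}_{j_n\le i<n}$ to reduce the problem to showing
\begin{equation*}
S := \sum_{k=j_n}^{n-1}\frac{\pi_n([k+1,n])}{\pi_n(k)} \asymp w_n \quad\text{and}\quad L := \max_{j > j_n}\pi_n([j,n])\sum_{k=j_n}^{j-1}\frac{1}{\pi_n(k)} \asymp w_n.
\end{equation*}

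Both upper bounds are immediate from $\pi_n([j,n])\le 1$. For the lower bounds I set $j^* = j_n + \lceil(n-j_n+1)/2\rceil$. Because $1/\pi_n(k)$ is decreasing on $[j_n,n]$, the first-half sum $g(j^*) := \sum_{k=j_n}^{j^*-1}1/\pi_n(k)$ captures at least half of $w_n$. On the other side, monotonicity of $\pi_n$ yields $\pi_n([j_n,j^*-1])\le(j^*-j_n)\pi_n(j^*)$ and $\pi_n([j^*,n])\ge(n-j^*+1)\pi_n(j^*)$; since the widths $j^*-j_n$ and $n-j^*+1$ are comparable, this forces $\pi_n([j^*,n])/\pi_n([j_n,n])$ to be bounded below by a positive constant, so $\pi_n([j^*,n])\asymp 1$. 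It follows that $L \ge \pi_n([j^*,n])g(j^*) \ge c\, w_n$, and since $\pi_n([k+1,n])\ge\pi_n([j^*,n])$ whenever $k < j^*$, also $S \ge \pi_n([j^*,n])g(j^*)\ge c\,w_n$. This step is the main obstacle: without monotonicity of $\pi_n$, the tail $\pi_n([j,n])$ and the reciprocal partial sum need not be balanced and $S, L$ need not be comparable to $w_n$.

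Feeding these estimates into Theorem \ref{t-cutoffrates} gives $\lambda_n^{-1}\asymp\ell_n\asymp\max\{v_n/p_{n,1},w_n/p_{n,j_n}\}$ and, for $\epsilon\in(0,1/2)$ and $\delta\in(0,1)$, $T_{n,\textnormal{\tiny TV}}^{(c)}(\epsilon)\asymp t_n\asymp T_{n,\textnormal{\tiny TV}}^{(\delta)}(\epsilon)\asymp\max\{u_n/p_{n,1},w_n/p_{n,j_n}\}$. For the cutoff equivalence, write $U=u_n/p_{n,1}$, $V=v_n/p_{n,1}$, $W=w_n/p_{n,j_n}$; then Theorem \ref{t-cutoffrates}'s criterion becomes $\max\{U,W\}/\max\{V,W\}\to\infty$. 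Since $\pi_n([0,j])\le\pi_n([0,k])$ for $j\le k$, the inequality $v_n\le u_n$ holds, so $V\le U$. If $U\le CW$ eventually then $\ell_n\ge W$ and $t_n\asymp W$, giving no cutoff; hence $U/W\to\infty$ is necessary. Given that, $t_n\asymp U$ and $t_n/\ell_n\asymp U/\max\{V,W\}\to\infty$ iff both $U/V\to\infty$ and $U/W\to\infty$, which translates to the stated conditions $u_n/v_n\to\infty$ and $u_np_{n,j_n}/(w_np_{n,1})\to\infty$.
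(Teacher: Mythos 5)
Your proposal is correct and follows essentially the same route as the paper, which likewise derives the theorem by feeding simplified forms of $t_n$ and $\ell_n$ into Theorem \ref{t-cutoffrates}; the paper only sketches the key comparison ($S\asymp w_n$, $L\asymp w_n$) by reference to the argument before Theorem \ref{t-valley}, and your index-midpoint $j^*$ argument is a correct instantiation of that step (the paper uses a mass-based midpoint instead, to the same effect). The remaining details — detailed balance to rewrite the right-hand sums, pulling out the rate factors via \textnormal{(\ref{eq-mono2})}, and the case analysis for the cutoff criterion — all check out.
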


For $n\ge 1$, let $f_n$ be a non-decreasing function on $[0,n]$ and set $F_n(x)=\int_0^xf_n(t)dt$ and $G_n(x,m)=\int_x^{m}1/f_n(t)dt$. Note that if there is $C>1$ such that
\[
 C^{-1}f_n(i)\pi_n(0)\le \pi_n(i)\le Cf_n(i)\pi_n(0),\quad\forall 0\le i\le n,\,n\ge 1,
\]
then
\[
 \frac{1}{2C^2}\left(\frac{F_n(k)}{f_n(k)}+1\right)\le \frac{\pi_n([0,k])}{\pi_n(k)}\le C^2\left(\frac{F_n(k)}{f_n(k)}+1\right)
\]
and
\[
 \frac{1}{2C}\left(G_n(j,j_n)+\frac{1}{f_n(j)}\right)\le\pi_n(0)\sum_{k=j}^{j_n-1}\frac{1}{\pi_n(k)}\le C\left(G_n(j,j_n)+\frac{1}{f_n(j)}\right).
\]
This implies
\[
 \pi_n([0,j])\sum_{k=j}^{j_n-1}\frac{1}{\pi_n(k)}\le C^2\left(G_n(j,j_n)+\frac{1}{f_n(j)}\right)\left(F_n(j)+f_n(j)\right)
\]
and
\[
 \pi_n([0,j])\sum_{k=j}^{j_n-1}\frac{1}{\pi_n(k)}\ge \frac{1}{4C^2}\left(G_n(j,j_n)+\frac{1}{f_n(j)}\right)\left(F_n(j)+f_n(j)\right).
\]

Let $u_n,v_n,w_n$ be the constants in Theorem \ref{t-mono} and assume that
\[
 \min_{0\le i<n}p_{n,i}\asymp \max_{0\le i<n}p_{n,i}\asymp 1.
\]
Consider the following cases.

{\bf Case 1:} $f_n(x)=\exp\{\alpha_nx^{\beta_n}\}$ with $\inf_n\alpha_n>0$ and $\inf_n\beta_n\ge 1$. In this case, $F_n(x)=O(f_n(x))$ and $G_n(x,m)=O(1/f_n(x))$ for $1\le x<m$. By setting $j_n=n$, we obtain
\[
 \pi_n([0,j_n])\asymp\pi_n([j_n,n]), \quad u_n\asymp n,\quad v_n\asymp w_n\asymp 1.
\]
By Theorem \ref{t-mono}, $\lambda_n\asymp 1$ and, for $\epsilon\in(0,1/2)$ and $\delta\in(0,1)$,
\[
 T_{n,\text{\tiny TV}}^{(c)}(\epsilon)\asymp T_{n,\text{\tiny TV}}^{(\delta)}(\epsilon)\asymp n.
\]
There is a total variation cutoff for $\mathcal{F}_c$ or $\mathcal{F}_\delta$.

{\bf Case 2:} $f_n(x)=\exp\{\alpha_nx^{\beta_n}\}$ with $0<\inf_n\alpha_n\le\sup_n\alpha_n<\infty$ and $0<\inf_n\beta_n\le\sup_n\beta_n<1$.
Note that, for $\alpha\in\mathbb{R}$ and $\beta\in(0,1)$,
\[
 \frac{d}{dx}\left(x^{1-\beta}e^{\alpha x^{\beta}}\right)=\left(\alpha\beta+(1-\beta)x^{-\beta}\right)e^{\alpha x^\beta}.
\]
This implies that, uniformly for $n/2\le x$ and $1+x\le m\le n$,
\[
 F_n(x)\asymp x^{1-\beta_n}f_n(x),\quad G_n(x,m)\asymp\left(\frac{x^{1-\beta_n}}{f_n(x)}-\frac{m^{1-\beta_n}}{f_n(m)}\right).
\]
Letting $j_n=\lfloor n-n^{1-\beta_n}\rfloor$ yields
\[
 \pi_n([0,j_n])\asymp\pi_n([j_n,n]),\quad u_n\asymp n^{2-\beta_n},\quad v_n\asymp n^{2-2\beta_n}\asymp w_n.
\]
By Theorem \ref{t-mono}, $\mathcal{F}_c$ and $\mathcal{F}_\delta$ have a total variation cutoff and
\[
 \lambda_n\asymp n^{2\beta_n-2},\quad T_{n,\text{\tiny TV}}^{(c)}(\epsilon)\asymp T_{n,\text{\tiny TV}}^{(\delta)}(\epsilon)\asymp n^{2-\beta_n},\quad\forall \epsilon\in(0,1/2),\,\delta\in(0,1).
\]

{\bf Case 3:} $f_n(x)=\exp\{\alpha_n[\log(x+1)]^{\beta_n}\}$ with $0<\inf_n\alpha_n\le\sup_n\alpha_n<\infty$ and $1<\inf_n\beta_n\le\sup_n\beta_n<\infty$. Note that, for $\alpha\in\mathbb{R}$ and $\beta>1$,
\[
 \frac{d}{dx}\left(\frac{(x+1)e^{\alpha[\log(x+1)]^\beta}}{[\log(x+1)]^{\beta-1}}\right)=
\left(\alpha\beta+\frac{1-(\beta-1)/\log(x+1)}{[\log(x+1)]^{\beta-1}}\right)e^{\alpha[\log(x+1)]^\beta}.
\]
This implies that, uniformly for $n/2\le x<m\le n$,
\[
 F_n(x)\asymp\frac{(x+1)}{[\log(x+1)]^{\beta_n-1}}e^{\alpha_n[\log(x+1)]^{\beta_n}}
\]
and
\[
 G_n(x,m)\asymp\left(\frac{(x+1)e^{-\alpha_n[\log(x+1)]^{\beta_n}}}{[\log(x+1)]^{\beta_n-1}}
 -\frac{(m+1)e^{-\alpha_n[\log(m+1)]^{\beta_n}}}{[\log(m+1)]^{\beta_n-1}}\right).
\]
Set $j_n=n[1-(\log n)^{1-\beta_n}]$. The above computation leads to
\[
 \pi_n([0,j_n])\asymp\pi_n([j_n,n]),\quad u_n\asymp n^2(\log n)^{1-\beta_n},\quad v_n\asymp n^2(\log n)^{2-2\beta_n}\asymp w_n.
\]
By Theorem \ref{t-mono}, both $\mathcal{F}_c$ and $\mathcal{F}_\delta$ have a total variation cutoff and, for $\epsilon\in(0,1/2)$ and $\delta\in(0,1)$,
\[
 \lambda_n\asymp n^{-2}(\log n)^{2\beta_n-2},\quad T_{n,\text{\tiny TV}}^{(c)}(\epsilon)\asymp n^2(\log n)^{1-\beta_n}\asymp T_{n,\text{\tiny TV}}^{(\delta)}(\epsilon).
\]

{\bf Case 4:} $f_n(x)=\exp\{\alpha_n[\log(x+1)]^{\beta_n}\}$ with $\sup_n\alpha_n<\infty$ and $\sup_n\beta_n\le 1$. Note that, as a consequence of the mean values theorem, one may choose, for each $0<a<1$, a constant $b\in(a,1)$ such that
\begin{align}
 1<\frac{f_n(n)}{f_n(an)}&=\exp\left\{\alpha_n\left[\left(\log(n+1)\right)^{\beta_n}
 -\left(\log(an+1)\right)^{\beta_n}\right]\right\}\notag\\
 &=\exp\left\{\alpha_n\beta_n(1-a)n\frac{\left(\log(bn+1)\right)^{\beta_n-1}}{bn+1}\right\}\notag\\
 &\le\exp\left\{\frac{1-a}{a}\sup_n\alpha_n\right\}<\infty.\notag
\end{align}
This implies that, for $a\in(0,1)$, one may choose a constant $A>1$ (depending on $a$) such that
\[
 \frac{1}{An}\le\pi_n(x)\le\frac{A}{n},\quad\forall x\ge an,\,\,n\ge 1.
\]
Choosing $j_n=\lfloor n/2\rfloor$ yields $\pi_n([0,j_n])\asymp\pi_n([j_n,n])$ and $u_n\asymp v_n\asymp w_n\asymp n^2$. By Theorem \ref{t-mono}, there is no total variation cutoff for $\mathcal{F}_c$ or $\mathcal{F}_\delta$ and
\[
 T_{n,\text{\tiny TV}}^{(c)}(\epsilon)\asymp T_{n,\text{\tiny TV}}^{(\delta)}(\epsilon)\asymp\lambda_n^{-1}\asymp n^2,\quad\forall \epsilon\in(0,1/2),\,\delta\in(0,1).
\]

\subsection{Chains with symmetric stationary distributions}
This subsection is dedicated to the study of birth and death chains with symmetric stationary distributions. Let $K$ be an irreducible birth and death chain on $\{0,...,n\}$ with stationary distribution $\pi$. Note that $\pi$ is symmetric at $n/2$, that is, $\pi(n-i)=\pi(i)$ for $0\le i\le n/2$, if and only if
\[
 p_ip_{n-i-1}=q_{i+1}q_{n-i},\quad\forall 0\le i\le n/2.
\]
By the symmetry of $\pi$, we will fix $j_n=\lfloor n/2\rfloor$ when applying Theorem \ref{t-cutoffrates}.

Consider a family of irreducible birth and death chains, $\mathcal{F}=\{(\Omega_n,K_n,\pi_n)|n=1,2,...\}$ with $\Omega_n=\{0,1,...,n\}$. Let $p_{n,i},q_{n,i},r_{n,i}$ be respectively the birth, death and holding rates of $K_n$ and $t_n,\ell_n$ be constants in Theorem \ref{t-cutoffrates}. Assume that $\pi_n$ is symmetric at $n/2$. Continuously using the fact $(a+b)/2\le \max\{a,b\}\le a+b$ for $a\ge 0,b\ge 0$, we obtain
\[
  t_n\asymp\sum_{k:k\le n/2}\frac{\pi_n([0,k])}{\pi_n(k)\min\{p_{n,k},q_{n,n-k}\}}
\]
and
\[
 \ell_n\asymp\max_{j:j\le n/2}\sum_{k:j\le k\le n/2}\frac{\pi_n([0,j])}{\pi_n(k)\min\{p_{n,k},q_{n,n-k}\}}.
\]
Theorem \ref{t-cutoffrates} can be rewritten as follows.

\begin{thm}\label{t-uniform}
Let $\mathcal{F}=\{(\Omega_n,K_n,\pi_n)|n=1,2,...\}$ be a family of irreducible birth and death chains with $\Omega_n=\{0,1,...,n\}$. Let $\lambda_n$ and $p_{n,i},q_{n,i},r_{n,i}$ be the spectral gap and the birth, death and holding rates of $K_n$. Assume that
\[
 p_{n,i}p_{n,n-i-1}=q_{n,i+1}q_{n,n-i},\quad \forall 0\le i\le n/2.
\]
Then, for $\epsilon\in(0,1/2)$ and $\delta\in(0,1)$,
\[
 \lambda_n\asymp 1/\ell_n, \quad T_{n,\textnormal{\tiny TV}}^{(c)}(\epsilon)\asymp T_{n,\textnormal{\tiny TV}}^{(\delta)}(\epsilon)\asymp t_n,
\]
where
\[
 t_n=\sum_{k:k\le n/2}\frac{\pi_n([0,k])}{\pi_n(k)\min\{p_{n,k},q_{n,n-k}\}}
\]
and
\[
 \ell_n=\max_{j:j\le n/2}\left\{\pi_n([0,j])\sum_{k:j\le k\le n/2}\frac{1}{\pi_n(k)\min\{p_{n,k},q_{n,n-k}\}}\right\}.
\]
Moreover, the following are equivalent.
\begin{itemize}
\item[(1)] $\mathcal{F}_c$ has a cutoff in total variation.

\item[(2)] For $\delta\in(0,1)$, $\mathcal{F}_\delta$ has a cutoff in total variation.

\item[(3)] $\mathcal{F}_c$ has a precutoff in total variation.

\item[(4)] For $\delta\in(0,1)$, $\mathcal{F}_\delta$ has a precutoff in total variation.

\item[(5)] $t_n/\ell_n\ra\infty$.
\end{itemize}
\end{thm}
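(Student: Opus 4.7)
The plan is to derive Theorem \ref{t-uniform} as a direct specialization of Theorem \ref{t-cutoffrates} with $j_n=\lfloor n/2\rfloor$, using the symmetry of $\pi_n$ to collapse each of the two competing terms defining $t_n$ and $\ell_n$ into a single sum supported on the ``left half'' of the state space.

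First I would verify the equivalence of the two formulations of symmetry. From the explicit formula $\pi_n(k)=\pi_n(0)\prod_{j=0}^{k-1}p_{n,j}/q_{n,j+1}$, imposing $\pi_n(k)=\pi_n(n-k)$ for all $k$ and taking telescoping ratios between consecutive values of $k$ yields exactly $p_{n,i}p_{n,n-i-1}=q_{n,i+1}q_{n,n-i}$ for $0\le i\le n/2$, and conversely. In particular $\pi_n([0,\lfloor n/2\rfloor])\ge 1/2$, so the hypothesis of Theorem \ref{t-cutoffrates} with $j_n=\lfloor n/2\rfloor$ is automatic provided $\pi_n(\lfloor n/2\rfloor)$ stays bounded away from $1$, which is the nontrivial case.

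Next I would perform the substitution $k\mapsto n-k$ in the ``right'' sum in the $t_n$ of Theorem \ref{t-cutoffrates}. Using $\pi_n([k,n])=\pi_n([0,n-k])$ and $\pi_n(k)=\pi_n(n-k)$, it becomes $\sum_{k=0}^{n-j_n-1}\pi_n([0,k])/(\pi_n(k)q_{n,n-k})$, while the ``left'' sum reads $\sum_{k=0}^{j_n-1}\pi_n([0,k])/(\pi_n(k)p_{n,k})$. Both are now sums over indices $k\le n/2$, differing at most by a single boundary term when $n$ is even. Combining them via $\max\{A,B\}\asymp A+B$ and the elementary identity $1/a+1/b\asymp 1/\min\{a,b\}$ yields
\[
 t_n\asymp\sum_{k:k\le n/2}\frac{\pi_n([0,k])}{\pi_n(k)\min\{p_{n,k},q_{n,n-k}\}},
\]
as announced. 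The same index substitution, now applied inside the two inner maxima defining $\ell_n$ in Theorem \ref{t-cutoffrates}, turns the right-hand max into $\max_{j'<n-j_n}\pi_n([0,j'])\sum_{k=j'}^{n-j_n-1}1/(\pi_n(k)q_{n,n-k})$; combining with the left max by the same $\max\asymp\mathrm{sum}$ step and the same reciprocal identity gives the claimed expression for $\ell_n$.

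The equivalence of (1)--(5) is then immediate from the corresponding equivalence in Theorem \ref{t-cutoffrates}, since $t_n/\ell_n\to\infty$ for the original constants is equivalent to the same statement for constants that are comparable to them up to multiplicative constants. The only real obstacle is bookkeeping: handling the midpoint $k=n/2$ when $n$ is even, and reconciling the three slightly different upper limits $\lfloor n/2\rfloor-1$, $\lfloor n/2\rfloor$, and $\lceil n/2\rceil-1$. Each such discrepancy contributes at most one summand and is absorbed by the $\asymp$ constants, after which nothing further remains to prove, the quantitative content having already been packaged inside Theorem \ref{t-cutoffrates}.
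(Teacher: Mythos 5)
Your proposal is correct and follows essentially the same route as the paper: fix $j_n=\lfloor n/2\rfloor$, use the symmetry $\pi_n(k)=\pi_n(n-k)$ to fold the right-hand sums onto the left half via $k\mapsto n-k$, and combine the two halves with $(a+b)/2\le\max\{a,b\}\le a+b$ and $1/p+1/q\asymp 1/\min\{p,q\}$ before invoking Theorem \ref{t-cutoffrates}. The one caveat you flag — that $\pi_n(\lfloor n/2\rfloor)$ must stay bounded away from $1$ for the hypothesis of Theorem \ref{t-cutoffrates} to hold when $n$ is even — is likewise left implicit in the paper, so your treatment matches its level of rigor.
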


The next theorem considers a perturbation of birth and death chains which has the same stationary distribution as the original chains. The new chains keep the order of mixing time and spectral gap unchanged.
\begin{thm}\label{t-uniform2}
Consider the family in Theorem \ref{t-uniform} and assume that
\[
 p_{n,i}p_{n,n-i-1}=q_{n,i+1}q_{n,n-i},\quad \forall 0\le i\le n/2.
\]
For $n\ge 1$, let $A_n\subset\{0,...,n-1\}$, $c_{n,i}\in[0,1]$ for $i\in A_n$ and $\widetilde{K}_n$ be a birth and death chain on $\Omega_n$ with birth and death rates, $\widetilde{p}_{n,i},\widetilde{q}_{n,i}$, satisfying
\[
 \begin{cases}\widetilde{p}_{n,i}=c_{n,i}p_{n,i}+(1-c_{n,i})\min\{p_{n,i},q_{n,n-i}\}&\text{for }i\in A_n,\\
\widetilde{q}_{n,i+1}=q_{n,i+1}\widetilde{p}_{n,i}/p_{n,i}&\text{for }i\in A_n,\\
\widetilde{p}_{n,i}=p_{n,i},\quad \widetilde{q}_{n,i+1}=q_{n,i+1}&\text{for }i\notin A_n.\end{cases}
\]
Let $\lambda_n,\widetilde{\lambda}_n$ and $T_{n,\textnormal{\tiny TV}}(\epsilon),\widetilde{T}_{n,\textnormal{\tiny TV}}(\epsilon)$ be the spectral gaps and total variation mixing times of $K_n,\widetilde{K}_n$. Then, given $\epsilon\in(0,1/2)$ and $\delta\in(0,1)$,
\[
 \widetilde{\lambda}_n\asymp\lambda_n,\quad\widetilde{T}^{(c)}_{n,\textnormal{\tiny TV}}(\epsilon)\asymp T^{(c)}_{n,\textnormal{\tiny TV}}(\epsilon)\asymp\widetilde{T}^{(\delta)}_{n,\textnormal{\tiny TV}}(\epsilon)\asymp T^{(\delta)}_{n,\textnormal{\tiny TV}}(\epsilon),
\]
where the approximation is uniform on the choice of $A_n,c_{n,i}$.
\end{thm}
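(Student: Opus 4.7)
The plan is to apply Theorem \ref{t-cutoffrates} to both $K_n$ and $\widetilde K_n$ with $j_n=\lfloor n/2\rfloor$ and verify that the resulting quantities $t_n,\widetilde t_n$ and $\ell_n,\widetilde\ell_n$ agree up to multiplicative constants that are independent of $A_n$ and $\{c_{n,i}\}$. First I would observe that $\widetilde K_n$ has the same stationary distribution as $K_n$: indeed, for $i\in A_n$,
\[
 \frac{\widetilde p_{n,i}}{\widetilde q_{n,i+1}}=\frac{\widetilde p_{n,i}}{q_{n,i+1}\widetilde p_{n,i}/p_{n,i}}=\frac{p_{n,i}}{q_{n,i+1}},
\]
and for $i\notin A_n$ equality is trivial, so $\widetilde\pi_n=\pi_n$. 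In particular $\widetilde\pi_n$ is symmetric at $n/2$, so the hypothesis $0<\liminf\widetilde\pi_n([0,j_n])\le\limsup\widetilde\pi_n([0,j_n])<1$ of Theorem \ref{t-cutoffrates} is satisfied automatically with $j_n=\lfloor n/2\rfloor$.

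Next I would establish the key two-sided bounds. Write $m_{n,k}=\min\{p_{n,k},q_{n,n-k}\}$. By construction, for every $i$,
\[
 m_{n,i}\le \widetilde p_{n,i}\le p_{n,i},
\]
(trivially when $i\notin A_n$, and from convex combination when $i\in A_n$). Using the symmetry identity $p_{n,i}p_{n,n-i-1}=q_{n,i+1}q_{n,n-i}$, the relation $\widetilde q_{n,i+1}=q_{n,i+1}\widetilde p_{n,i}/p_{n,i}$ gives
\[
 \min\{q_{n,i+1},p_{n,n-i-1}\}=m_{n,n-i-1}\le \widetilde q_{n,i+1}\le q_{n,i+1}.
\]

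Then I would plug these bounds into the formulas of Theorem \ref{t-cutoffrates} applied to $\widetilde K_n$ at $j_n=\lfloor n/2\rfloor$. The left half-sum of $\widetilde t_n$ is squeezed between $\sum_{k=0}^{j_n-1}\pi_n([0,k])/(\pi_n(k)p_{n,k})$ and $\sum_{k=0}^{j_n-1}\pi_n([0,k])/(\pi_n(k)m_{n,k})$, and for the right half-sum I would use the substitution $j=n-k$ together with the symmetry $\pi_n(k)=\pi_n(n-k)$ and $\pi_n([k,n])=\pi_n([0,n-k])$ to convert it to the analogous left-type sum with denominators $q_{n,n-j}$ (lower bound) or $m_{n,j}$ (upper bound). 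Comparing the two max expressions that arise for $K_n$ and $\widetilde K_n$, both are sandwiched between the common symmetric quantity $\sum_{k\le n/2}\pi_n([0,k])/(\pi_n(k)m_{n,k})$ and itself (since taking the max over the two sides produces exactly the minimum $m_{n,k}$ in the lower bound). This shows $\widetilde t_n\asymp t_n$ with universal constants. The very same argument, replacing $\sum_{k\le j_n-1}$ by $\max_{j<j_n}\sum_{k=j}^{j_n-1}$ and similarly on the right, yields $\widetilde\ell_n\asymp\ell_n$.

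Having established $\widetilde t_n\asymp t_n$ and $\widetilde\ell_n\asymp\ell_n$, Theorem \ref{t-cutoffrates} gives $\widetilde\lambda_n\asymp 1/\widetilde\ell_n\asymp 1/\ell_n\asymp\lambda_n$ and $\widetilde T_{n,\text{\tiny TV}}^{(c)}(\epsilon)\asymp\widetilde t_n\asymp t_n\asymp T_{n,\text{\tiny TV}}^{(c)}(\epsilon)$, with the analogous statement for $\widetilde T_{n,\text{\tiny TV}}^{(\delta)}(\epsilon)$; the constants are uniform over $A_n$ and $\{c_{n,i}\}$ because every estimate above depended only on the universal inequalities $m_{n,k}\le \widetilde p_{n,k}\le p_{n,k}$ and $m_{n,n-k}\le \widetilde q_{n,k+1}\le q_{n,k+1}$ together with the symmetry of $\pi_n$. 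The only delicate step is the algebraic identification of the lower bound for $\widetilde q_{n,i+1}$ as $m_{n,n-i-1}$ via the symmetry relation, and the subsequent change of variables $k\mapsto n-k$ that makes the left and right sums comparable; everything else is routine bookkeeping.
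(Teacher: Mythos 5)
Your proposal is correct and follows essentially the same route as the paper: the paper simply declares the result ``immediate from Theorem \ref{t-uniform}'' (with uniformity from Theorems \ref{t-upper}, \ref{t-lower} and \ref{t-gap}), and what you have done is supply the details that make this immediate, namely that $\widetilde\pi_n=\pi_n$ and that the sandwich $m_{n,k}\le\widetilde p_{n,k}\le p_{n,k}$, $m_{n,n-i-1}\le\widetilde q_{n,i+1}\le q_{n,i+1}$ (the latter via the symmetry identity) forces $\min\{\widetilde p_{n,k},\widetilde q_{n,n-k}\}=\min\{p_{n,k},q_{n,n-k}\}$, so the quantities $t_n,\ell_n$ of Theorem \ref{t-uniform} are unchanged. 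Your closing remark that uniformity comes from the universality of these inequalities (rather than from the asymptotic statement of Theorem \ref{t-cutoffrates} alone) matches the paper's appeal to the explicit quantitative theorems.
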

\begin{proof}
The approximation of the spectral gap and the total variation mixing time is immediate from Theorem \ref{t-uniform}, whereas the uniformity of the approximation is given by Theorems \ref{t-upper}, \ref{t-lower} and \ref{t-gap}.
\end{proof}

\begin{ex}
For $n\ge 1$, let $K_n$ be a birth and death chain on $\{0,1,...,2n\}$ given by
\[
 K_n(i,i+1)=K_n(i+1,i)=\begin{cases}1/2&\text{for even $i$}\\1/(2n)&\text{for odd $i$}\end{cases}.
\]
By Theorem \ref{t-uniform2}, the mixing time and spectral gap of $K_n$ are comparable with those of $\widetilde{K}_n$, where $\widetilde{K}_n(i,i+1)=\widetilde{K}_n(i+1,i)=1/(2n)$ for $0\le i<2n$. Let $\mathcal{F}$ be the family consisting of $K_n$. By Theorem \ref{t-uniform}, neither $\mathcal{F}_c$ nor $\mathcal{F}_\delta$ has a total variation precutoff and $T_{n,\text{\tiny TV}}^{(c)}(\epsilon)\asymp T_{n,\text{\tiny TV}}^{(\delta)}(\epsilon)\asymp \lambda_n^{-1}\asymp n^3$ for all $\epsilon\in(0,1/2)$ and $\delta\in(0,1)$, which is nontrivial.
\end{ex}

Next, we consider simple random walks on finite paths with bottlenecks. For $n\ge 1$, let $k_n\le n$ and $x_{n,1},...,x_{n,k_n}$ be positive integers satisfying $1\le x_{n,i}<x_{n,i+1}\le n$ for $i=1,...,k_n-1$. Let $K_n$ be the birth and death chain on $\{0,1,...,n\}$ of which birth, death and holding rates are given by
\begin{equation}\label{eq-btnk}
 p_{n,i-1}=q_{n,i}=\begin{cases}1/2&\text{for }i\notin\{x_{n,1},...,x_{n,k_n}\}\\\epsilon_{n,j}&\text{for }i=x_{n,j},\,1\le j\le k_n\end{cases},
\end{equation}
where $\epsilon_{n,j}\in(0,1/2]$ for $1\le j\le k_n$. Clearly, $K_n$ is irreducible and the stationary distribution, say $\pi_n$, is uniform on $\{0,1,...,n\}$. The following theorem  is immediate from Theorems \ref{t-uniform}.

\begin{thm}\label{t-btnk}
Let $\mathcal{F}$ be a family of birth and death chains given by \textnormal{(\ref{eq-btnk})} and $\lambda_n$ be the spectral gap of $K_n$. For $n\ge 1$, set
\[
 t_n=n^2+\sum_{i=1}^{k_n}\frac{\min\{x_{n,i},n+1-x_{n,i}\}}{\epsilon_{n,i}}
\]
and
\[
 \ell_n=
n^2+\max_{j:j\le n/2}\left\{\sum_{i:|x_{n,i}-n/2|\le j}\frac{n/2+1-j}{\epsilon_{n,i}}\right\}.
\]
Then, for all $\epsilon\in(0,1/2)$ and $\delta\in(0,1)$,
\[
 T_{n,\textnormal{\tiny TV}}^{(c)}(\epsilon)\asymp T_{n,\textnormal{\tiny TV}}^{(\delta)}(\epsilon)\asymp t_n,\quad \lambda_n\asymp 1/\ell_n.
\]
Furthermore, the following are equivalent.
\begin{itemize}
\item[(1)] $\mathcal{F}_c$ has a cutoff in total variation.

\item[(2)] For $\delta\in(0,1)$, $\mathcal{F}_\delta$ has a cutoff in total variation.

\item[(3)] $\mathcal{F}_c$ has precutoff in total variation.

\item[(4)] For $\delta\in(0,1)$, $\mathcal{F}_\delta$ has a precutoff in total variation.

\item[(5)] $t_n/\ell_n\ra\infty$.
\end{itemize}
\end{thm}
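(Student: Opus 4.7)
My plan is to derive Theorem \ref{t-btnk} as a direct computation from Theorem \ref{t-uniform}, since the uniform distribution $\pi_n\equiv 1/(n+1)$ is symmetric about $n/2$. First I would verify that the symmetry hypothesis of Theorem \ref{t-uniform} holds. In the walk \textnormal{(\ref{eq-btnk})} one has $p_{n,i}=q_{n,i+1}$ for every $i$, so the identity $p_{n,i}p_{n,n-i-1}=q_{n,i+1}q_{n,n-i}$ reduces to $p_{n,i}p_{n,n-i-1}=p_{n,i}p_{n,n-i-1}$ and holds trivially; hence Theorem \ref{t-uniform} applies and gives $T_{n,\textnormal{\tiny TV}}^{(c)}(\epsilon)\asymp T_{n,\textnormal{\tiny TV}}^{(\delta)}(\epsilon)\asymp t_n^{\mathrm{sym}}$ and $\lambda_n\asymp 1/\ell_n^{\mathrm{sym}}$, where $t_n^{\mathrm{sym}}$ and $\ell_n^{\mathrm{sym}}$ are the two quantities displayed in Theorem \ref{t-uniform}. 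It then suffices to check that $t_n^{\mathrm{sym}}\asymp t_n$ and $\ell_n^{\mathrm{sym}}\asymp \ell_n$ as defined in Theorem \ref{t-btnk}; the cutoff equivalences will then follow verbatim from Theorem \ref{t-uniform}.

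For $t_n^{\mathrm{sym}}$, using $\pi_n([0,k])/\pi_n(k)=k+1$, I would write
\[
 t_n^{\mathrm{sym}}=\sum_{k=0}^{\lfloor n/2\rfloor}\frac{k+1}{\min\{p_{n,k},q_{n,n-k}\}}.
\]
The denominator equals $1/2$ unless either $k+1$ or $n-k$ lies in $\{x_{n,1},\dots,x_{n,k_n}\}$, in which case it equals (up to the $\min$) the corresponding $\epsilon_{n,i}$. The part where no bottleneck appears contributes $\asymp n^2$. Each bottleneck $x_{n,i}\le n/2$ appears at $k+1=x_{n,i}$ contributing $x_{n,i}/\epsilon_{n,i}$, while each bottleneck $x_{n,i}>n/2$ appears at $n-k=x_{n,i}$ contributing $(n+1-x_{n,i})/\epsilon_{n,i}$. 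Combining, $t_n^{\mathrm{sym}}\asymp n^2+\sum_{i}\min\{x_{n,i},n+1-x_{n,i}\}/\epsilon_{n,i}=t_n$.

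For $\ell_n^{\mathrm{sym}}$, substituting uniform $\pi_n$ gives
\[
 \ell_n^{\mathrm{sym}}=\max_{0\le j\le n/2}(j+1)\sum_{k=j}^{\lfloor n/2\rfloor}\frac{1}{\min\{p_{n,k},q_{n,n-k}\}}.
\]
The inner sum is $\asymp(n/2+1-j)$ from non-bottleneck terms, plus $\sum 1/\epsilon_{n,i}$ over bottlenecks for which $k+1=x_{n,i}$ or $n-k=x_{n,i}$ with $j\le k\le n/2$, i.e.\ $x_{n,i}\in[j+1,n-j]$. Changing variable $j\mapsto n/2-j$ (and absorbing $\pm1$ rounding into the $\asymp$), the contribution from the non-bottleneck part gives $(n/2+1-j)(j+1)\le n^2$ uniformly, while the bottleneck contribution yields the term $\max_j(n/2+1-j)\sum_{i:|x_{n,i}-n/2|\le j}1/\epsilon_{n,i}$. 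This reproduces $\ell_n$ up to constants.

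The routine but delicate step is the change of indexing between the interval $[j,n/2]$ used by Theorem \ref{t-uniform} and the symmetric window $|x_{n,i}-n/2|\le j$ used in Theorem \ref{t-btnk}; the main obstacle is handling the $\pm 1$ discrepancies (floor of $n/2$, endpoints $j+1$ vs $j$, the case when both $k+1$ and $n-k$ are bottlenecks so that their $\epsilon$'s compete in the $\min$) without losing the $\asymp$ equivalence uniformly in the configuration $(k_n,x_{n,i},\epsilon_{n,i})$. Once these approximations are in place, parts (1)--(5) of the theorem are immediate consequences of the corresponding equivalences in Theorem \ref{t-uniform}.
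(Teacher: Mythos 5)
Your proposal is correct and follows exactly the route the paper takes: the paper derives Theorem \ref{t-btnk} as an immediate consequence of Theorem \ref{t-uniform}, and your verification of the symmetry hypothesis together with the substitution of the uniform measure and the reindexing of the bottleneck sums is precisely the computation that makes it immediate.
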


\begin{rem}
Let $t_n,\ell_n$ be the constants in Theorem \ref{t-btnk}. Then,
\[
 t_n\asymp n^2+\sum_{j\in L_n}\frac{x_{n,j}}{\epsilon_{n,j}}+\sum_{j\in R_n}\frac{n+1-x_{n,j}}{\epsilon_{n,j}}
\]
and
\[
 \ell_n\asymp n^2+\max_{i\in L_n}\sum_{j\in L_n:j\ge i}\frac{x_{n,i}}{\epsilon_{n,j}}+\max_{i\in R_n}\sum_{j\in R_n:j\le i}\frac{n+1-x_{n,i}}{\epsilon_{n,j}}.
\]
where $L_n=\{i:x_{n,i}\le n/2\}$ and $R_n=\{i:x_{n,i}>n/2\}$.
\end{rem}

Theorem \ref{t-btnk1} considers a special case of Theorem \ref{t-btnk} with $\epsilon_{n,i}=\epsilon_n$ for $1\le i\le k_n$. It is clear from Theorem \ref{t-btnk1} that if $k_n$ is bounded, then no cutoff exists for $\mathcal{F}_c$ or $\mathcal{F}_\delta$. The following example shows a case of cutoffs for the family in Theorem \ref{t-btnk1}.

\begin{ex}
Let $\mathcal{F}$ be the family in Theorem \ref{t-btnk1}, with $k_n=\lfloor n^{1/3}\rfloor-1$ and
\[
 x_{n,i}=\left\lfloor\frac{n^{5/6}}{n^{1/3}-i}\right\rfloor,\quad\forall 1\le i\le k_n.
\]
Clearly, for $n$ large enough, $x_{n,i}\ne x_{n,j}$ when $i\ne j$. Let $a_n,b_n$ be the constant in Theorem \ref{t-btnk1}. It is not hard to show that
\[
 a_n\asymp n^{5/6}\log n,\quad b_n\asymp n^{5/6}.
\]
By Theorem \ref{t-btnk1}, $\mathcal{F}_c$ and $\mathcal{F}_\delta$, with $\delta\in(0,1)$, have a total variation cutoff if and only if $\epsilon_n=o(n^{-7/6}\log n)$. Furtheromre, if $\epsilon_n=o(n^{-7/6}\log n)$, then
\[
 T_{n,\text{\tiny TV}}^{(c)}(\epsilon)\asymp \frac{n^{5/6}\log n}{\epsilon_n}\asymp T_{n,\text{\tiny TV}}^{(\delta)}(\epsilon),\quad \forall \epsilon,\delta\in(0,1).
\]
\end{ex}

The following two theorems treat special cases of Theorem \ref{t-btnk}.

\begin{thm}\label{t-btnk2}
Let $\mathcal{F}$ be a family of birth and death chains satisfying \textnormal{(\ref{eq-btnk})}. Let $N$ be a positive constant. Suppose, for $n\ge 1$, there are constants $J^{(n)}_1,...,J^{(n)}_N$ and a partition of $\{1,...,k_n\}$, say $I^{(n)}_1,...,I^{(n)}_N$, such that, for $1\le k\le N$,
\[
 \max_{i\in I^{(n)}_k}\{x_{n,i}\wedge (n+1-x_{n,i})\}\asymp \min_{i\in I^{(n)}_k}\{x_{n,i}\wedge (n+1-x_{n,i})\}\asymp J^{(n)}_k,
\]
where $a\wedge b=\min\{a,b\}$. Then, neither $\mathcal{F}_c$ nor $\mathcal{F}_\delta$ has a total variation cutoff. Moreover,
\[
 T_{n,\textnormal{\tiny TV}}^{(c)}(\epsilon)\asymp T_{n,\textnormal{\tiny TV}}^{(\delta)}(\epsilon)\asymp \lambda_n^{-1}\asymp t_n,\quad\forall \epsilon\in(0,1/2),\,\delta\in(0,1)
\]
where
\[
 t_n=n^2+\max_{1\le k\le N}\left\{J^{(n)}_k\sum_{l\in I^{(n)}_k}\frac{1}{\epsilon_{n,l}}\right\}.
\]
\end{thm}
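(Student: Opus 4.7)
The plan is to derive Theorem~\ref{t-btnk2} directly from Theorem~\ref{t-btnk} by showing that, under the partition hypothesis, both the mixing-time scale and the inverse-spectral-gap scale produced by Theorem~\ref{t-btnk} are comparable to the new quantity
\[
 t_n = n^2 + \max_{1\le k\le N} J^{(n)}_k \sum_{l\in I^{(n)}_k}\frac{1}{\epsilon_{n,l}}.
\]
Once this is established, the no-cutoff conclusion follows at once from Theorem~\ref{t-csal12-2}, since $T_{n,\textnormal{\tiny TV}}\lambda_n$ will be bounded.

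First I would rewrite the two quantities from Theorem~\ref{t-btnk} in a more convenient form. Setting $y_i:=x_{n,i}\wedge(n+1-x_{n,i})$, the mixing-time scale is $t_n^{\textnormal{orig}}=n^2+\sum_i y_i/\epsilon_{n,i}$, and after the change of variable $m=\lfloor n/2\rfloor-j$ the spectral-gap scale reads, up to fixed constants,
\[
 \ell_n \asymp n^2 + \max_{m\ge 0}(m+1)\sum_{i:\,y_i\ge m}\frac{1}{\epsilon_{n,i}}.
\]
Next I would verify $t_n^{\textnormal{orig}}\asymp t_n$. Splitting along the partition and using $y_l\asymp J^{(n)}_k$ for $l\in I^{(n)}_k$,
\[
 \sum_{i=1}^{k_n}\frac{y_i}{\epsilon_{n,i}} = \sum_{k=1}^{N}\sum_{l\in I^{(n)}_k}\frac{y_l}{\epsilon_{n,l}} \asymp \sum_{k=1}^{N} J^{(n)}_k\sum_{l\in I^{(n)}_k}\frac{1}{\epsilon_{n,l}},
\]
and because $N$ is a fixed constant the final sum is comparable to its largest term.

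The central step is to show $\ell_n\asymp t_n$. For the lower bound, fix $k$ and choose $m_k=\lfloor c_1 J^{(n)}_k\rfloor$ with $c_1>0$ small enough that $y_l\ge m_k$ for every $l\in I^{(n)}_k$ (possible by the uniform lower bound $y_l\gtrsim J^{(n)}_k$ on the block). Then
\[
 (m_k+1)\sum_{i:\,y_i\ge m_k}\frac{1}{\epsilon_{n,i}} \ge (m_k+1)\sum_{l\in I^{(n)}_k}\frac{1}{\epsilon_{n,l}} \asymp J^{(n)}_k\sum_{l\in I^{(n)}_k}\frac{1}{\epsilon_{n,l}},
\]
and maximizing over $k$ gives $\ell_n\gtrsim t_n$. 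For the upper bound, fix $m\ge 0$ and decompose the set $\{i:y_i\ge m\}$ along the partition. Whenever the block $\{l\in I^{(n)}_k: y_l\ge m\}$ is non-empty, the uniform upper estimate $y_l\lesssim J^{(n)}_k$ forces $m\lesssim J^{(n)}_k$, so
\[
 (m+1)\sum_{l\in I^{(n)}_k:\,y_l\ge m}\frac{1}{\epsilon_{n,l}} \lesssim J^{(n)}_k\sum_{l\in I^{(n)}_k}\frac{1}{\epsilon_{n,l}}.
\]
Summing this bound over the $N$ blocks contributes only a constant factor, yielding $(m+1)\sum_{i:\,y_i\ge m}1/\epsilon_{n,i}\lesssim t_n$, and hence $\ell_n\lesssim t_n$.

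Putting the three comparisons together with Theorem~\ref{t-btnk} gives $T^{(c)}_{n,\textnormal{\tiny TV}}(\epsilon)\asymp T^{(\delta)}_{n,\textnormal{\tiny TV}}(\epsilon)\asymp \lambda_n^{-1}\asymp t_n$, and since $T_{n,\textnormal{\tiny TV}}\lambda_n\asymp 1$ is bounded, Theorem~\ref{t-csal12-2} rules out a precutoff in total variation for $\mathcal{F}_c$ and $\mathcal{F}_\delta$. The main technical care lies in the upper bound on $\ell_n$: one must exploit both the two-sided uniform estimate $y_l\asymp J^{(n)}_k$ on each block and the constancy of $N$, so that the decomposition of the outer sum over $i$ costs only a bounded multiplicative factor rather than growing with $n$.
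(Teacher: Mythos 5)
Your proposal is correct and follows exactly the route the paper intends: Theorem~\ref{t-btnk2} is stated as a special case of Theorem~\ref{t-btnk}, and your computation showing $t_n^{\textnormal{orig}}\asymp \ell_n\asymp t_n$ (using $y_l\asymp J^{(n)}_k$ on each block and the boundedness of $N$ to pass between sums and maxima) is precisely the verification required, after which the boundedness of $t_n/\ell_n$ rules out a cutoff via the equivalence already contained in Theorem~\ref{t-btnk}.
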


The next theorem gives an example that no total variation cutoff exists for $\mathcal{F}_c,\mathcal{F}_\delta$ even when the constant $N$ in Theorem \ref{t-btnk2} tends to infinity.

\begin{thm}
Let $\mathcal{F}$ be a family of birth and death chains satisfying \textnormal{(\ref{eq-btnk})}. Suppose that $\min_j\epsilon_{n,j}\asymp\max_j\epsilon_{n,j}$ and $x_{n,i}=\lfloor in/k_n\rfloor$ with $k_n\le n/2$, then neither  $\mathcal{F}_c$ nor $\mathcal{F}_\delta$ has a total variation cutoff, but
\[
 T_{n,\textnormal{\tiny TV}}^{(c)}(\epsilon)\asymp T_{n,\textnormal{\tiny TV}}^{(\delta)}(\epsilon)\asymp \lambda_n^{-1}\asymp \max\{n^2,nk_n/\epsilon_{n,1}\},\quad\forall \epsilon\in(0,1/2),\,\delta\in(0,1).
\]
\end{thm}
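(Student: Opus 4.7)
The plan is to apply Theorem \ref{t-btnk} by computing $t_n$ and $\ell_n$ under the stated hypotheses and showing both are of order $\max\{n^2,\,nk_n/\epsilon_{n,1}\}$. Since $\min_j\epsilon_{n,j}\asymp\max_j\epsilon_{n,j}$, every $\epsilon_{n,j}$ is comparable to $\epsilon_n:=\epsilon_{n,1}$, so I may replace each $\epsilon_{n,j}$ by $\epsilon_n$ in the formulas for $t_n$ and $\ell_n$ at the cost of absorbing the resulting multiplicative constants into $\asymp$.

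First I compute $t_n$. Since $x_{n,i}=\lfloor in/k_n\rfloor$, one has $\min\{x_{n,i},n+1-x_{n,i}\}\asymp(n/k_n)\min\{i,k_n-i+1\}$ for $1\le i\le k_n$, and summing this arithmetic-type expression yields $\sum_{i=1}^{k_n}\min\{x_{n,i},n+1-x_{n,i}\}\asymp nk_n$. Hence $t_n\asymp n^2+nk_n/\epsilon_n$.

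Next I compute $\ell_n$. Write $c(j):=|\{i:|x_{n,i}-n/2|\le j\}|$ and $g(j):=(n/2+1-j)\,c(j)$ for $0\le j\le n/2$. Because the $x_{n,i}$ form a grid with common spacing $n/k_n$, one has $c(j)\asymp\min\{jk_n/n+1,\,k_n\}$. In the intermediate regime $n/k_n\lesssim j\le n/2$ this gives $g(j)\asymp jk_n(n/2-j)/n$, a downward parabola maximized near $j\asymp n/4$ with peak value $\asymp nk_n$; the boundary regimes ($j\lesssim n/k_n$ gives $g\asymp n$, and $j$ near $n/2$ gives $g\asymp k_n$) are lower order. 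Therefore $\max_j g(j)\asymp nk_n$ and $\ell_n\asymp n^2+nk_n/\epsilon_n\asymp t_n$.

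Now I invoke Theorem \ref{t-btnk}: the quantitative bounds on mixing time and spectral gap there yield $T^{(c)}_{n,\textnormal{\tiny TV}}(\epsilon)\asymp T^{(\delta)}_{n,\textnormal{\tiny TV}}(\epsilon)\asymp\lambda_n^{-1}\asymp t_n\asymp\max\{n^2,\,nk_n/\epsilon_n\}$ for every $\epsilon\in(0,1/2)$ and $\delta\in(0,1)$, while boundedness of $t_n/\ell_n$ makes condition (5) of Theorem \ref{t-btnk} fail, so neither $\mathcal{F}_c$ nor $\mathcal{F}_\delta$ has a total variation cutoff. The main obstacle is the estimate for $\ell_n$: one must simultaneously track the decaying prefactor $(n/2+1-j)$ and the growing count $c(j)$ in three separate regimes, but the uniform spacing of the $x_{n,i}$ reduces the optimization to a one-variable problem whose extremum is elementary.
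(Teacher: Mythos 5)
Your proposal is correct and follows exactly the route the paper intends: the theorem is stated as a special case of Theorem \ref{t-btnk}, and your computations showing $t_n\asymp \ell_n\asymp n^2+nk_n/\epsilon_{n,1}\asymp\max\{n^2,nk_n/\epsilon_{n,1}\}$ are what is needed. The only blemish is the pointwise claim $\min\{x_{n,i},n+1-x_{n,i}\}\asymp(n/k_n)\min\{i,k_n-i+1\}$, which fails at the extreme indices (e.g.\ $i=k_n$ gives left side $1$ but right side $n/k_n$); this is harmless since the sum is dominated by the $\asymp k_n$ middle terms of size $\asymp n$, so the conclusion $\sum_i\min\{x_{n,i},n+1-x_{n,i}\}\asymp nk_n$ stands.
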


\begin{rem}
Note that the assumption regarding the birth and death rates in this section can be relaxed using the comparison technique in \cite{DS93-1,DS93-2}.
\end{rem}

%

\appendix

\section{Spectral gaps of finite paths}

This section is devoted to finding the correct order of spectral gaps of finite paths. Let $G=(V,E)$ be the undirected finite graph with vertex set $V=\{0,1,2,...n\}$ and edge set $E=\{\{i,i+1\}:i=0,1,...,n-1\}$. Given two positive measures $\pi,\nu$ on $V,E$ with $\pi(V)=1$, the Dirichlet form and variance associated with $\nu$ and $\pi$ are defined by
\[
 \mathcal{E}_\nu(f,g):=\sum_{i=1}^{n-1}[f(i)-f(i+1)][g(i)-g(i+1)]\nu(i,i+1)
\]
and
\[
 \text{Var}_\pi(f):=\pi(f^2)-\pi(f)^2,
\]
where $f,g$ are functions on $V$. The spectral gap of $G$ with respect to $\pi,\nu$ is defined as
\[
 \lambda^G_{\pi,\nu}:=\min\left\{\frac{\mathcal{E}_\nu(f,f)}{\text{Var}_\pi(f)}
 \bigg|f\text{ is non-constant}\right\}.
\]

To bound the spectral gap, we need the following notations. Let $C_+(i)$ and $C_-(i)$ be constants defined by
\begin{equation}\label{eq-c+c-}
 C_+(i)=\max_{j:j>i}\sum_{k=i+1}^j\frac{\pi([j,n])}{\nu(k-1,k)},\quad
 C_-(i)=\max_{j:j<i}\sum_{k=j}^{i-1}\frac{\pi([0,j])}{\nu(k,k+1)},
\end{equation}
where $\max\emptyset:=0$.

\begin{thm}\label{t-Hardy}
Let $G=(V,E)$ be a path on $\{0,1,...,n\}$ and $\pi,\nu$ be positive measures on $V,E$ with $\pi(V)=1$. Referring to \textnormal{(\ref{eq-c+c-})}, set $C(m)=\max\{C_+(m),C_-(m)\}$. Then, for $0\le m\le n$,
\[
 \frac{1}{4C(m)}\le\lambda^G_{\pi,\nu}\le \frac{1}{\min\{\pi([0,m]),\pi([m,n])\}C(m)}.
\]
In particular, if $M$ is a median of $\pi$, that is, $\pi([0,M])\ge 1/2$ and $\pi([M,n])\ge 1/2$, then
\[
 \frac{1}{4C(M)}\le\lambda^G_{\pi,\nu}\le \frac{2}{C(M)}.
\]
\end{thm}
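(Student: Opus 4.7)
The plan is to reduce both bounds to a one-sided weighted Hardy inequality applied separately on $\{0,\ldots,m\}$ and $\{m,\ldots,n\}$, meeting at the vertex $m$. This is the natural approach because the graph is a path, so any variational bound on the spectral gap localizes nicely once the function is pinned at a single interior vertex.

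For the lower bound, let $f$ be non-constant and replace $f$ by $f-f(m)$. The Dirichlet form $\mathcal{E}_\nu(f,f)$ depends only on differences and so is unchanged, while $\mathrm{Var}_\pi(f)=\min_c\pi((f-c)^2)\le\pi((f-f(m))^2)$, so with the shifted $f$ I have $\mathrm{Var}_\pi(f)\le\pi(f^2)$ and $f(m)=0$. Split
\[
\pi(f^2)=\sum_{i<m}f(i)^2\pi(i)+\sum_{i>m}f(i)^2\pi(i),
\]
noting that the $i=m$ term vanishes. Each piece will be controlled by the classical weighted Hardy inequality with a Muckenhoupt-type constant: for any $g$ on $\{0,\ldots,m\}$ with $g(m)=0$,
\[
\sum_{i=0}^{m-1}g(i)^2\pi(i)\le 4C_-(m)\sum_{k=0}^{m-1}(g(k+1)-g(k))^2\nu(k,k+1),
\]
and symmetrically on $\{m,\ldots,n\}$ with constant $4C_+(m)$. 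Summing the two halves gives $\mathrm{Var}_\pi(f)\le 4C(m)\mathcal{E}_\nu(f,f)$, hence $\lambda^G_{\pi,\nu}\ge 1/(4C(m))$.

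For the upper bound I produce an explicit test function. Suppose $C(m)=C_-(m)$ and let $j^*<m$ achieve the maximum, so $C(m)=\pi([0,j^*])H(j^*)$ where $H(i):=\sum_{k=i}^{m-1}1/\nu(k,k+1)$. Define
\[
f(i)=\begin{cases}H(j^*)&0\le i\le j^*\\ H(i)&j^*\le i\le m\\ 0&m\le i\le n\end{cases}
\]
Then the only non-vanishing edge terms occur for $j^*\le k<m$ and telescope cleanly to give $\mathcal{E}_\nu(f,f)=H(j^*)$. Using $\mathrm{Var}_\pi(f)=\tfrac12\sum_{x,y}(f(x)-f(y))^2\pi(x)\pi(y)$ restricted to $x\le j^*,\,y\ge m$ produces
\[
\mathrm{Var}_\pi(f)\ge\pi([0,j^*])\pi([m,n])H(j^*)^2,
\]
and the Rayleigh quotient gives $\lambda^G_{\pi,\nu}\le 1/(\pi([m,n])C(m))$. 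The symmetric construction handles the case $C(m)=C_+(m)$ and yields $\lambda^G_{\pi,\nu}\le 1/(\pi([0,m])C(m))$. Together these give the stated upper bound, and specializing to the median $M$ produces the factor $2/C(M)$.

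The one technical obstacle is the weighted Hardy inequality with the constant $4$; naive Cauchy–Schwarz on the telescoping identity $g(i)=\sum_{k\ge i}(g(k)-g(k+1))$ only yields the weaker constant $\sum_k\pi([0,k])/\nu(k,k+1)$, whereas the $\max$ formulation needs the Muckenhoupt-style argument in which Cauchy–Schwarz is applied with a weight proportional to $H(k)^{1/2}-H(k+1)^{1/2}$ and then Abel-summed. This is classical and is the content of the path-graph Hardy inequality proved in the appendix; once it is in hand the two bounds above follow immediately.
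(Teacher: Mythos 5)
Your proof is correct. The lower bound is exactly the paper's argument: center $f$ at $m$, split into the two halves $f_\pm$ vanishing at $m$, apply the discrete Muckenhoupt--Hardy inequality (Proposition \ref{p-Hardy}) to each half after the change of variables to increments, and recombine; the constants $4C_\pm(m)$ come out identically. Where you genuinely depart from the paper is the upper bound. The paper proves it by taking an actual minimizer $\phi$ of the Hardy inequality on one half, forming its cumulative sum $\psi$ extended by zero across the other half, and converting $\pi(\psi^2)$ into $\mathrm{Var}_\pi(\psi)$ via Cauchy--Schwarz on $\pi(\psi)^2\le\pi(\{\psi>0\})\pi(\psi^2)$; this route leans on the lower estimate $A\ge B$ of Proposition \ref{p-Hardy}, which the paper establishes through the Euler--Lagrange equations of the minimizer. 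You instead write down the explicit test function built from the maximizing index $j^*$ in the definition of $C_\pm(m)$, compute $\mathcal{E}_\nu(f,f)=H(j^*)$ by telescoping, and lower-bound the variance by the cross terms between $[0,j^*]$ and $[m,n]$ in the double-sum formula. This is self-contained and more elementary -- it bypasses the existence and positivity of the Hardy minimizer entirely -- at the cost of re-deriving by hand what the $A\ge B$ half of Proposition \ref{p-Hardy} packages abstractly; both yield the same constants, including $2/C(M)$ at a median. One small point of care: when you ``restrict'' the identity $\mathrm{Var}_\pi(f)=\tfrac12\sum_{x,y}(f(x)-f(y))^2\pi(x)\pi(y)$ to the cross pairs, you must count both orderings $(x,y)$ and $(y,x)$ to absorb the factor $\tfrac12$ and land on $\pi([0,j^*])\pi([m,n])H(j^*)^2$ rather than half of it; as stated your displayed inequality is the correct one.
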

\begin{rem}
Referring to the setting in Theorem \ref{t-Hardy}, the authors of \cite{CSal12} obtained $\lambda^G_{\pi,\nu}\ge 1/C'$, where
\[
 C'=\min_{0\le j\le n}\max\left\{\sum_{k=0}^{j-1}\frac{\pi([0,k])}{\nu(k,k+1)},\sum_{k=j+1}^{n}\frac{\pi([k,n])}{\nu(k-1,k)}\right\}.
\]
Theorem \ref{t-Hardy} indicates that  $1/C(M)$ is always of the same order as the spectral gap and provides an estimate that can be significantly better than $1/C'$.
\end{rem}

The proof of Theorem \ref{t-Hardy} is based on the following proposition, which is related to weighted Hardy's inequality on $\{1,...,n\}$.

\begin{prop}\label{p-Hardy}
Fix $n\ge 1$. Let $\mu,\pi$ be positive measures on $\{1,...,n\}$ and $A$ be the smallest constant such that
\begin{equation}\label{eq-Hardy}
 \sum_{i=1}^n\left(\sum_{j=1}^ig(j)\right)^2\pi(i)\le A\sum_{i=1}^ng^2(i)\mu(i),\quad\forall g\ne\mathbf{0}.
\end{equation}
Then, $B\le A\le 4B$, where
\[
 B=\max_{1\le i\le n}\left\{\pi([i,n])\sum_{j=1}^i\frac{1}{\mu(j)}\right\}.
\]
\end{prop}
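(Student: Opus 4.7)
\textbf{Proof proposal for Proposition \ref{p-Hardy}.} The lower bound $B\le A$ comes from plugging in a family of test functions. For each fixed $i_0\in\{1,\dots,n\}$, set $g(j)=1/\mu(j)$ for $j\le i_0$ and $g(j)=0$ otherwise. Then $\sum_{j=1}^i g(j)=\sum_{j=1}^{i\wedge i_0}1/\mu(j)$, so the left side of \eqref{eq-Hardy} is at least $\pi([i_0,n])\bigl(\sum_{j=1}^{i_0}1/\mu(j)\bigr)^2$, while the right side equals $A\sum_{j=1}^{i_0}1/\mu(j)$. Dividing gives $A\ge \pi([i_0,n])\sum_{j=1}^{i_0}1/\mu(j)$, and taking the maximum over $i_0$ yields $A\ge B$.

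The upper bound $A\le 4B$ is the Muckenhoupt-type half, and is where the work lies. Set $S(j)=\sum_{k=1}^j 1/\mu(k)$. The plan is to insert the weight $w(j)=\sqrt{S(j)}$ inside a Cauchy--Schwarz, namely
\[
 \Bigl(\sum_{j=1}^i g(j)\Bigr)^2=\Bigl(\sum_{j=1}^i g(j)\sqrt{\mu(j)w(j)}\cdot\frac{1}{\sqrt{\mu(j)w(j)}}\Bigr)^2\le \Bigl(\sum_{j=1}^i g(j)^2\mu(j)w(j)\Bigr)\Bigl(\sum_{j=1}^i\frac{1}{\mu(j)w(j)}\Bigr).
\]
For the second factor I will use the telescoping estimate $\sqrt{S(j)}-\sqrt{S(j-1)}\ge (S(j)-S(j-1))/(2\sqrt{S(j)})$ (with $S(0)=0$), which gives $\sum_{j=1}^i 1/(\mu(j)\sqrt{S(j)})\le 2\sqrt{S(i)}$.

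Substituting this back into the left side of \eqref{eq-Hardy} and swapping the order of summation,
\[
 \sum_{i=1}^n\pi(i)\Bigl(\sum_{j=1}^i g(j)\Bigr)^2\le 2\sum_{j=1}^n g(j)^2\mu(j)\sqrt{S(j)}\sum_{i=j}^n\pi(i)\sqrt{S(i)}.
\]
It remains to show $\sqrt{S(j)}\sum_{i\ge j}\pi(i)\sqrt{S(i)}\le 2B$ for every $j$. Using the definition of $B$ we have $\pi([i,n])\le B/S(i)$, and Abel summation gives
\[
 \sum_{i=j}^n\pi(i)\sqrt{S(i)}=\pi([j,n])\sqrt{S(j)}+\sum_{i=j+1}^n\pi([i,n])(\sqrt{S(i)}-\sqrt{S(i-1)}).
\]
Bounding $\pi([i,n])\le B/S(i)$ and using the identity $(\sqrt{S(i)}-\sqrt{S(i-1)})/\sqrt{S(i)S(i-1)}=1/\sqrt{S(i-1)}-1/\sqrt{S(i)}$ together with $S(i)\ge\sqrt{S(i)S(i-1)}$, the sum telescopes to at most $B/\sqrt{S(j)}$; adding the boundary term $\pi([j,n])\sqrt{S(j)}\le B/\sqrt{S(j)}$ gives the required $2B/\sqrt{S(j)}$. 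Substituting back yields $\sum_i\pi(i)(\sum_{j\le i}g(j))^2\le 4B\sum_j g(j)^2\mu(j)$, proving $A\le 4B$.

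The main obstacle is the upper bound, and specifically the correct choice of Cauchy--Schwarz weight: the canonical choice $w(j)=\sqrt{S(j)}$ is what balances the two telescoping steps (once to pass from $\sum 1/(\mu\sqrt{S})$ to $\sqrt{S}$, once to pass from $\sum\pi\sqrt{S}$ to $B/\sqrt{S}$). Any other power of $S(j)$ would leave an unbounded factor of $S$ on one side. Given this, both telescoping estimates reduce to the elementary inequality $\sqrt{a}-\sqrt{b}\ge (a-b)/(2\sqrt{a})$ for $a\ge b\ge 0$, so once the weight is fixed the rest is mechanical.
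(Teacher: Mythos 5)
Your proof is correct, and it reaches the same constant $4B$ as the paper, but the route differs in both halves, most substantially in the lower bound. For $B\le A$ the paper does not use a test function at all: it invokes the existence of a positive minimizer $g$ for $A$, writes down the Euler--Lagrange equations $Ag(i)\mu(i)=\sum_{j\ge i}(g(1)+\cdots+g(j))\pi(j)$, sums them over $i\le\ell$, and extracts $A\ge\pi([\ell,n])\sum_{i\le\ell}1/\mu(i)$ from the resulting double sum. Your choice of the explicit test function $g=\mu^{-1}\mathbf{1}_{[1,i_0]}$ gets the same conclusion more directly and avoids having to justify attainability and positivity of the minimizer; this is a genuine simplification. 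For $A\le 4B$ both arguments are Muckenhoupt's method with the identical Cauchy--Schwarz weight ($N^{1/2}(j)$ in the paper equals your $\sqrt{S(j)}$) and the identical first telescoping via $\sqrt{t}-\sqrt{s}\ge(t-s)/(2\sqrt{t})$. You diverge only in the second factor: the paper immediately replaces $N^{1/2}(i)$ by $(B/\pi([i,n]))^{1/2}$ and then telescopes $\sum_{i\ge j}\pi(i)\pi([i,n])^{-1/2}\le 2\pi([j,n])^{1/2}$ using the same square-root inequality, whereas you keep $\sqrt{S(i)}$, perform an Abel summation, and telescope in $1/\sqrt{S}$. Both variants are valid and yield $\sqrt{4B}\cdot\sqrt{4B}=4B$ versus your $2\cdot 2B=4B$; the paper's version is marginally shorter since it reuses the same one-line inequality twice rather than introducing a summation by parts.
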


\begin{rem}
Miclo \cite{M99} discussed the infinity case $\{1,2,...\}$ using the method in \cite{M72}, which was introduced by Muckenhoupt to study the continuous case $[0,\infty)$. For more information on the weighted Hardy inequality, see \cite{M99} and the references therein.
\end{rem}

\begin{proof}[Proof of Theorem \ref{t-Hardy}]
We first consider the lower bound of $\lambda^G_{\pi,\nu}$. Let $f$ be any function defined on $V$ and set $f_+=[f-f(m)]\mathbf{1}_{\{m,...,n\}}$ and $f_-=[f-f(m)]\mathbf{1}_{\{0,...,m\}}$. Then,
\begin{equation}\label{eq-lower}
 \frac{\mathcal{E}_\nu(f,f)}{\text{Var}_\pi(f)}\ge\frac{\mathcal{E}_\nu(f,f)}{\pi(f-f(m))^2}=\frac{\mathcal{E}_\nu(f_+,f_+)+\mathcal{E}_\nu(f_-,f_-)}{\pi(f_+^2)+\pi(f_-^2)}
\end{equation}
Set $g(j)=f(m+j)-f(m+j-1)$ for $1\le j\le n-m$ and $h(i)=f(m-i)-f(m-i+1)$ for $1\le i\le m$. Note that
\[
 \mathcal{E}_\nu(f_+,f_+)=\sum_{j=1}^{n-m}g^2(j)\nu(m+j-1,m+j),\,\pi(f_+^2)=\sum_{j=1}^{n-m}\left(\sum_{k=1}^jg(k)\right)^2\pi(m+j),
\]
and
\[
\mathcal{E}_\nu(f_-,f_-)=\sum_{i=1}^mh^2(i)\nu(m-i,m-i+1),\,\pi(f_-^2)=\sum_{j=1}^{m}\left(\sum_{k=1}^jh(k)\right)^2\pi(m-j).
\]
By Proposition \ref{p-Hardy}, the above computation implies that
\[
 \frac{\mathcal{E}_\nu(f_+,f_+)}{\pi(f_+^2)}\ge \frac{1}{4C_+(m)},\quad\frac{\mathcal{E}_\nu(f_-,f_-)}{\pi(f_-^2)}\ge \frac{1}{4C_-(m)}.
\]
Putting this back to (\ref{eq-lower}) gives the desired lower bound.

For the upper bound, we first consider the case $C=C_+(m)$. By Proposition \ref{p-Hardy}, $C_+(m)\le A$, where $A$ is the smallest constant $A$ such that, for any function $\phi$ defined on $\{1,2,...,n-m+1\}$,
\[
 \sum_{j=1}^{n-m}\left(\sum_{k=1}^j\phi(k)\right)^2\pi(m+j)\le A\sum_{j=1}^{n-m}\phi^2(j)\nu(m+j-1,m+j).
\]
Let $\phi$ be a minimizer for $A$, which must exist, and define $\psi$ by setting
\[
 \psi(i)=\begin{cases}\phi(1)+\cdots+\phi(i-m)&\text{for }m<i\le n\\0&\text{for }0\le i\le m\end{cases}.
\]
Clearly, $1/C_+(m)\ge 1/A=\mathcal{E}_\nu(\psi,\psi)/\pi(\psi^2)$. Without loss of generality, we may assume further that $\phi$ is nonnegative. Note that $\pi(\{\psi=0\})\ge\pi([0,m])$. By the Cauchy-Schwartz inequality, this implies $\pi(\psi)^2\le\pi(\{\psi>0\})\pi(\psi^2)\le\pi([m+1,n])\pi(\psi^2)$ and, then, $\text{Var}_\pi(\psi)\ge\pi([0,m])\pi(\psi^2)$. This leads to $1/C=1/C_+(m)\ge\pi([0,m])\lambda^G_{\pi,\nu}$. Similarly, if $C=C_-(m)$, one can prove that $1/C\ge\pi([m,n])\lambda^G_{\pi,\nu}$. This yields the upper bound of the spectral gap.
\end{proof}

\begin{proof}[Proof of Proposition \ref{p-Hardy}]
The proofs of Theorem \ref{t-Hardy} and Proposition \ref{p-Hardy} are very similar to those in \cite{M99}. Note that $A$ is attained at functions of the same sign and we assume that $g$ is non-negative. As $A$ is attainable, the minimizer $g$ for $A$ satisfies the following Euler-Lagrange equations.
\begin{equation}\label{eq-ELA}
 Ag(i)\mu(i)=\sum_{j=i}^n(g(1)+\cdots+g(j))\pi(j),\quad\forall 1\le i\le n.
\end{equation}
This is equivalent to the following system of equations.
\[
 A[g(i)\mu(i)-g(i+1)\mu(i+1)]=(g(1)+\cdots+g(i))\pi(i),\quad\forall 1\le i\le n,
\]
with the convention that $\mu(n+1):=0$. Inductively, one can show that $g>0$. Summing up (\ref{eq-ELA}) over $\{1,...,\ell\}$ yields
\begin{align}
 A\sum_{i=1}^\ell g(i)&=\sum_{i=1}^\ell\frac{1}{\mu(i)}\sum_{j=i}^n(g(1)+\cdots+g(j))\pi(j)\notag\\
&\ge\sum_{i=1}^\ell\sum_{j=\ell}^n\frac{(g(1)+\cdots+g(j))\pi(j)}{\mu(i)}\notag\\&\ge\left(\sum_{i=1}^\ell g(i)\right)\left(\sum_{i=1}^\ell\frac{1}{\mu(i)}\right)\pi([\ell,n]).\notag
\end{align}
This leads to $A\ge B$.

To see the upper bound, we use Miclo's method in \cite{M99}. Set $N(j)=\sum_{i=1}^j1/\mu(i)$. By the Cauchy inequality, the left side of (\ref{eq-Hardy}) is bounded above by
\[
 \sum_{i=1}^n\pi(i)\sum_{j=1}^ig^2(j)\mu(j)N^{1/2}(j)\sum_{l=1}^i\frac{1}{\mu(l)N^{1/2}(l)}.
\]
Note that, for $s>0,t>0$, $t^{1/2}-s^{1/2}\ge(t-s)/(2t^{1/2})$. This implies $2(N^{1/2}(l)-N^{1/2}(l-1))\ge 1/(\mu(l) N^{1/2}(l))$ with the convention that $N(0):=0$. Consequently, we have
\[
\sum_{l=1}^i\frac{1}{\mu(l)N^{1/2}(l)}\le 2N^{1/2}(i)\le \left(\frac{4B}{\pi([i,n])}\right)^{1/2},
\]
and, thus,
\begin{align}
  \sum_{i=1}^n\left(\sum_{j=1}^ig(j)\right)^2\pi(i)&\le\sqrt{4B}\sum_{i=1}^n\frac{\pi(i)}{\pi([i,n])^{1/2}}
\sum_{j=1}^ig^2(j)\mu(j)N^{1/2}(j)\notag\\
&\le\sqrt{4B}\sum_{j=1}^ng^2(j)\mu(j)N^{1/2}(j)\sum_{i=j}^n\frac{\pi(i)}{\pi([i,n])^{1/2}}\notag.
\end{align}
Again, the inequality for $s,t$ implies
\[
 \sum_{i=j}^n\frac{\pi(i)}{\pi([i,n])^{1/2}}\le 2\pi([j,n])^{1/2}\le\frac{\sqrt{4B}}{N^{1/2}(j)}.
\]
This gives the desired upper bound.
\end{proof}

Next, we consider a special case. Let $\pi,\nu$ are measures on $V=\{0,1,...,n\},E=\{\{i,i+1\}|0\le i<n\}$ with $\pi(V)=1$. Suppose
\begin{equation}\label{eq-sym}
 \pi(i)=\pi(n-i),\quad\nu(i,i+1)=\nu(n-i-1,n-i),\quad \forall 0\le i\le n/2.
\end{equation}
By the symmetry of $\pi$ and $\nu$, if $\psi$ is a minimizer for $\lambda^G_{\pi,\nu}$ with $\pi(\psi)=0$, then $\psi$ is either symmetric or anti-symmetric at $n/2$. The former is set aside because $\psi$ is known to be monotonic and this leads to the case $\psi(n-i)=-\psi(i)$ for $0\le i\le n/2$. If $n$ is even with $n=2k$, then $\psi(k)=0$ and this implies
\[
 \lambda^G_{\pi,\nu}=\inf\left\{\frac{\sum_{i=1}^k(f(i)-f(i-1))^2\nu(i-1,i)}{\sum_{i=0}^{k-1}f^2(i)\pi(i)}\bigg|f(k)=0,f\ne\mathbf{0}\right\}.
\]
Equivalently, if one sets $g(i)=f(k-i)-f(k-i+1)$ and $\mu(i)=\nu(k-i,k-i+1)$ for $1\le i\le k$, then $1/\lambda^G_{\pi,\nu}$ is the smallest constant $A$ such that
\begin{equation}\label{eq-Hardy2}
 \sum_{i=1}^k\left(\sum_{j=1}^ig(j)\right)^2\pi(k-i)\le A\sum_{i=1}^kg^2(i)\mu(i),\quad\forall g\ne\mathbf{0}.
\end{equation}
Similarly, if $n$ is odd with $n=2k-1$, one has
\[
 \lambda^G_{\pi,\nu}=\min\left\{\frac{\sum_{i=1}^{k-1}(f(i)-f(i-1))^2\nu(i-1,i)+2f^2(k-1)\nu(k-1,k)}{\sum_{i=0}^{k-1}f^2(i)\pi(i)}\bigg|f\ne\mathbf{0}\right\},
\]
and this leads to (\ref{eq-Hardy2}) with $g(1)=f(k-1)$, $\mu(1)=2\nu(k-1,k)$ and, for $2\le i\le k$, $g(i)=f(k-i)-f(k-i+1)$ and $\mu(i)=\nu(k-i,k-i+1)$. A direct application of Proposition \ref{p-Hardy} implies the following theorem.

\begin{thm}\label{t-sym1}
Let $G=(V,E)$ be the graph with $V=\{0,1,...,n\}$, $E=\{\{i,i+1\}|i=0\le i<n\}$ and let $\pi,\nu$ be positive measures on $V,E$ satisfying $\pi(V)=1$ and \textnormal{(\ref{eq-sym})}. Set $N=\lceil n/2\rceil$. Then, $1/(4C)\le\lambda^G_{\pi,\nu}\le 1/C$, where
\[
 C=\max_{0\le i<N}\left\{\pi([0,i])\sum_{j=i}^{N-1}\frac{1}{\nu(j,j+1)}\right\}\quad\text{if $n$ is even},
\]
and
\[
 C=\max_{0\le i<N}\left\{\pi([0,i])\left(\sum_{j=i}^{N-2}\frac{1}{\nu(j,j+1)}+\frac{1}{2\nu(N-1,N)}\right)\right\}\quad\text{if $n$ is odd}.
\]
\end{thm}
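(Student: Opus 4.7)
The plan is to follow the reduction sketched in the paragraph immediately preceding the statement and then invoke Proposition \ref{p-Hardy} on the resulting weighted Hardy inequality. The work has essentially been set up; what remains is to verify the reduction rigorously and unpack Proposition \ref{p-Hardy} in the two parities.

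First I would justify restricting attention to anti-symmetric minimizers. A minimizer $\psi$ of the Rayleigh quotient $\mathcal{E}_\nu(\psi,\psi)/\mathrm{Var}_\pi(\psi)$ exists by compactness and may be normalized so that $\pi(\psi)=0$. Because $\pi$ and $\nu$ are invariant under $i\mapsto n-i$ by (\ref{eq-sym}), the map $\psi\mapsto\psi(n-\cdot)$ preserves both $\mathcal{E}_\nu$ and $\mathrm{Var}_\pi$, so the symmetric and anti-symmetric parts of $\psi$ individually satisfy the same Rayleigh quotient equation. The symmetric part corresponds to an eigenfunction that is monotonic on each half (this is the standard fact that eigenfunctions of birth and death chains are monotonic on intervals between consecutive sign changes), and monotonicity together with $\pi(\psi)=0$ and the equality $\psi(i)=\psi(n-i)$ forces $\psi$ to be constant. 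Hence we may assume $\psi(n-i)=-\psi(i)$.

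Next I split into cases. If $n=2k$, anti-symmetry forces $\psi(k)=0$, and after relabeling $f(i)=\psi(k-i)$ the variational problem becomes
\[
\lambda^G_{\pi,\nu}=\inf\left\{\frac{\sum_{i=1}^k(f(i)-f(i-1))^2\nu(i-1,i)}{\sum_{i=0}^{k-1}f(i)^2\pi(i)}:f(k)=0,\,f\ne\mathbf{0}\right\}.
\]
Setting $g(i)=f(k-i)-f(k-i+1)$ for $1\le i\le k$ and $\mu(i)=\nu(k-i,k-i+1)$, so that $f(k-i)=\sum_{j=1}^i g(j)$, puts the quotient in the exact form (\ref{eq-Hardy2}), and $1/\lambda^G_{\pi,\nu}$ is the smallest constant $A$ for which
\[
\sum_{i=1}^k\Bigl(\sum_{j=1}^i g(j)\Bigr)^{\!2}\pi(k-i)\le A\sum_{i=1}^k g(i)^2\mu(i)
\]
holds for all $g$. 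Proposition \ref{p-Hardy}, applied with $\pi(k-i)$ in place of $\pi(i)$ (so that $\pi([i,k])$ in the proposition becomes $\pi([0,k-i])$ in the original indexing), yields $B\le 1/\lambda^G_{\pi,\nu}\le 4B$ with
\[
B=\max_{1\le i\le k}\Bigl\{\pi([0,k-i])\sum_{j=1}^i\frac{1}{\nu(k-j,k-j+1)}\Bigr\}
=\max_{0\le i<k}\Bigl\{\pi([0,i])\sum_{j=i}^{k-1}\frac{1}{\nu(j,j+1)}\Bigr\},
\]
after reindexing $k-i\mapsto i$ and $k-j\mapsto j$. This is exactly $C$ in the even case $N=k$.

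For $n=2k-1$, anti-symmetry gives $\psi(k)=-\psi(k-1)$, so the edge $(k-1,k)$ contributes $(\psi(k)-\psi(k-1))^2\nu(k-1,k)=4\psi(k-1)^2\nu(k-1,k)$ to $\mathcal{E}_\nu(\psi,\psi)$, while the numerator from $\mathrm{Var}_\pi$ on $\{0,\dots,k-1\}$ is unchanged. The resulting Rayleigh quotient on $\{0,\dots,k-1\}$ becomes (\ref{eq-Hardy2}) with the prescription $g(1)=f(k-1)$, $\mu(1)=2\nu(k-1,k)$ and, for $2\le i\le k$, $g(i)=f(k-i)-f(k-i+1)$ and $\mu(i)=\nu(k-i,k-i+1)$, exactly as indicated in the excerpt. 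Applying Proposition \ref{p-Hardy} and reindexing as before produces the odd-case constant with the extra $1/(2\nu(N-1,N))$ term accounting for the doubled weight $\mu(1)$.

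The one delicate step is the monotonicity/symmetry argument that eliminates symmetric minimizers; everything else is bookkeeping. My expectation is that the clean approach is simply to argue that a symmetric $\psi$ with $\pi(\psi)=0$ cannot be an eigenfunction for the smallest nonzero eigenvalue (it would have at least two sign changes, contradicting the Perron--Frobenius-type monotonicity of the second eigenfunction on a path), after which all remaining computations are direct applications of Proposition \ref{p-Hardy}.
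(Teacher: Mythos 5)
Your proposal is correct and follows essentially the same route as the paper: decompose a mean-zero minimizer into symmetric and anti-symmetric parts, discard the symmetric case via the monotonicity of the second eigenfunction of a path, reduce each parity to the weighted Hardy inequality (\ref{eq-Hardy2}), and apply Proposition \ref{p-Hardy} with the reindexing you describe. (One cosmetic slip: with the relabeling $f(i)=\psi(k-i)$ the boundary condition would be $f(0)=0$ rather than $f(k)=0$; the displayed quotient you then write corresponds to taking $f=\psi$ restricted to $\{0,\dots,k\}$, which is what the paper uses and what your subsequent substitution $g(i)=f(k-i)-f(k-i+1)$ assumes.)
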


\begin{rem}
The symmetry of $\pi,\nu$ in Theorems \ref{t-sym1} can be relaxed using the comparison technique.
\end{rem}

\bibliographystyle{plain}
\bibliography{reference}

\end{document}